\newtheorem{thm}{Theorem}[section]
\newaliascnt{cor}{thm}
\newtheorem{cor}[cor]{Corollary}
\newaliascnt{prop}{thm}
\newtheorem{prop}[prop]{Proposition}
\newaliascnt{lem}{thm}
\newtheorem{lem}[lem]{Lemma}
\newaliascnt{conj}{thm}
\newaliascnt{que}{thm}
\newtheorem{que}[que]{Question}
\theoremstyle{definition}
\newtheorem{defn}[thm]{Definition}
\newtheorem{exmp}[thm]{Example}
\newtheorem{notn}[thm]{Notation}
\newtheorem{conv}[thm]{Convention}
\newtheorem{ass}[thm]{Assumption}
\newtheorem{expectation}[thm]{Expectation}
\theoremstyle{remark}
\newtheorem{rem}[thm]{Remark}
\newcommand{\Z}{\mathbb{Z}\xspace}
\newcommand{\Q}{\mathbb{Q}\xspace}
\newcommand{\G}{\mathbb{G}\xspace}
\DeclareMathOperator{\Spec}{Spec}
\DeclareMathOperator{\alb}{alb}
\DeclareMathOperator{\img}{Im}
\DeclareMathOperator{\ck}{coker}
\DeclareMathOperator{\Cor}{Cor}
\DeclareMathOperator{\Hom}{Hom}
\DeclareMathOperator{\Pic}{Pic}
\DeclareMathOperator{\Gal}{Gal}
\DeclareMathOperator{\fc}{frac}
\DeclareMathOperator{\Fil}{Fil}
\DeclareMathOperator{\Mod}{Mod}
\DeclareMathOperator{\Cone}{Cone}
\DeclareMathOperator{\FI}{FI}
\DeclareMathOperator{\Gp}{Gp}
\let\c@equation\c@thm
\numberwithin{equation}{section}
\title{A finer Tate duality theorem for local Galois symbols}
\author{Evangelia Gazaki}
\email{gazaki@umich.edu}
\address{Department of Mathematics\\ University of Michigan\\ 530 Church Street, Ann Arbor, Michigan 48109}
\begin{document}

\maketitle
\begin{abstract}

Let $K$ be a finite extension of $\Q_p$. Let $A$, $B$ be abelian varieties over $K$ with good reduction. For any integer $m\geq 1$, we consider the  Galois symbol $K(K;A,B)/m\rightarrow H^2(K,A[m]\otimes B[m])$, where $K(K;A,B)$ is the Somekawa $K$-group attached to $A,B$. This map is a generalization of the Galois symbol $K_2^M(K)/m\rightarrow H^2(K,\mu_m^{\otimes 2})$ of the Bloch-Kato conjecture, where $K_2^M(K)$ is the Milnor $K$-group of $K$. In this paper we give a geometric description of the image of this generalized Galois symbol by looking at the Tate duality pairing $H^{2}(K,A[m]\otimes B[m])\times\Hom_{G_{K}}(A[m],B^{\star}[m])\rightarrow\Z/m,$ where $B^\star$ is the dual abelian variety of $B$. Under this perfect pairing we compute the exact annihilator of the image of the Galois symbol in terms of an object of integral $p$-adic Hodge theory. In this way we generalize a result of Tate for $H^1$. Moreover, our result has applications to zero cycles on abelian varieties.
\end{abstract}

\section{Introduction} The main goal of this article is to generalize a classical result of J. Tate about local Tate duality. We start by reviewing this classical result.

\subsection{Finer Tate duality for $H^1$} Let $K$ be a finite extension of the $p$-adic field $\Q_{p}$ with Galois group $G_{K}:=\Gal(\overline{K}/K)$. Local Tate duality tells us that for a finite continuous $G_{K}$-module $M$ annihilated by a positive integer $m$, there is a canonical perfect pairing of finite abelian groups
\[H^{i}(K,M)\times H^{2-i}(K,M^{\star})\longrightarrow\Z/m\Z,\] where by $H^{i}(K,-)$ we denote  the Galois cohomology of $K$, and $M^{\star}=\Hom(M,\mu_{m})$.

In the special case when $M=A[m]$ is the Galois module of the $m$-torsion points of an abelian  variety $A$ over $K$, Tate has a finer result, which we recall here.
The short exact sequence of $G_K$-modules, $0\longrightarrow A[m]\longrightarrow A(\overline{K})\stackrel{m}{\longrightarrow}A(\overline{K})\rightarrow 0$, known as the Kummer sequence of $A$, induces a connecting homomorphism $A(K)\stackrel{\delta}{\longrightarrow} H^{1}(K,A[m])$. The Weil pairing gives an isomorphism $(A[m])^{\star}\simeq A^{\star}[m]$, where $A^{\star}$ is the dual abelian variety of $A$. The local Tate duality pairing for $H^{1}$ therefore takes on the form
$H^{1}(K,A[m])\times H^{1}(K,A^{\star}[m])\longrightarrow\Z/m.$
Tate proved that under this perfect pairing the orthogonal complement of $\img(A(K)\stackrel{\delta}{\rightarrow} H^{1}(K,A[m]))$ is the corresponding part, $\img(A^{\star}(K)\stackrel{\delta}{\rightarrow} H^{1}(K,A^{\star}[m]))$, that comes from the points.

We will refer to this classical result as the finer local Tate duality for $H^{1}$. The main goal of this paper is to describe a finer local Tate duality for $H^{2}$. To do this we need a right set-up, that is a suitable finite $G_K$-module $M$ and a significant subgroup $H$ of $H^2(K,M)$. \vspace{2pt}
\subsection{Set up and Statement of the main theorem}\label{intro1}
We claim that a suitable set-up for the $H^2$ problem is the following. Let $A$, $B$ be abelian varieties over the $p$-adic field $K$ and $m\geq 1$ a positive integer. We consider the finite $G_K$-module $M=A[m]\otimes B[m]$. For any finite extension $L$ over $K$,  the Kummer sequences on $A, B$ and  the cup product induce a map
\[A(L)\otimes B(L)\longrightarrow H^{2}(L,A[m]\otimes B[m])\longrightarrow
H^{2}(K,A[m]\otimes B[m]),\] where the second map is the Corestriction map of Galois cohomology. Hence, we obtain a map 
\[\bigoplus_{L/K\text{ finite}}A(L)\otimes B(L)\longrightarrow H^{2}(K,A[m]\otimes B[m]).\] The image of this map is the \textit{significant subgroup} of $H^2(K,M)$, whose orthogonal complement under Tate duality we wish to describe. 

To see why this is a  good set-up, we note  that the above map factors through a certain $K$-group, namely the Somekawa $K$-group $K(K;A,B)$ attached to the abelian varieties $A,B$ (see \cite{Som}). This group is a generalization of the Milnor K-group, $K_2^M(K)$, and the map \[K(K;A,B)/m\xrightarrow{s_m} H^{2}(K,A[m]\otimes B[m])\] is a generalization of the classical Galois symbol
$K_2^M(K)/m\rightarrow H^2(K,\mu_m^{\otimes 2})$ of the Milnor conjecture. In fact, we call the map $s_m$ the \textit{generalized Galois symbol}. We note that  for a single abelian variety $A$, the group $K(K;A)$ is isomorphic to $A(K)$, which together with the $G_K$-module $A[m]$, form the  set-up for the $H^1$-problem considered by Tate. 

The image of this Galois symbol $s_m$ is the part of $H^2(K,A[m]\otimes B[m])$ that comes from the points of the abelian varieties and  this is the part we want to understand. We consider the Tate duality perfect pairing, 
\[(\star):\;H^{2}(K,A[m]\otimes B[m])\times\Hom_{G_{K}}(A[m],B^{\star}[m])\longrightarrow\Z/m,\] where $B^{\star}$ is the dual abelian variety of $B$. The main theorem of this article is the following.
\begin{thm}\label{BIG}(Finer Tate duality for $H^{2}$) Let $K$ be a finite extension of $\Q_{p}$, where $p>3$. Let $A$, $B$ be abelian varieties over $K$ with good reduction. Let $\mathcal{A}$, $\mathcal{B}^{\star}$ be the N\'{e}ron models of $A$ and $B^{\star}$ respectively. Under the Tate duality pairing, the orthogonal complement of $\img(K(K;A,B)\rightarrow H^{2}(K,A[m]\otimes B[m]))$ consists of those homomorphisms $f$ that lift to a  homomorphism $\mathcal{A}[m]\xrightarrow{\tilde{f}}\mathcal{B}^{\star}[m]$ of finite flat group schemes over the ring of integers, $\mathcal{O}_{K}$, of $K$.
\end{thm}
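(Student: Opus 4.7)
The plan is to translate the $H^2$ orthogonality condition into an $H^1$ statement over all finite extensions $L/K$, invoke Tate's classical finer duality for $H^1$, and then use integral $p$-adic Hodge theory to characterize which $G_K$-maps $f$ actually lift to $\mathcal{O}_K$. Fix $f\in\Hom_{G_K}(A[m],B^\star[m])$; via the Weil pairing $B^\star[m]\otimes B[m]\to\mu_m$, view $f$ as a $G_K$-equivariant map $A[m]\otimes B[m]\to\mu_m$ that implements the pairing $(\star)$ through cup product into $H^2(K,\mu_m)=\Z/m$. A generator $\{a,b\}_{L/K}$ of $K(K;A,B)$, with $a\in A(L)$ and $b\in B(L)$, maps under the Galois symbol to $\Cor_{L/K}(\delta_A(a)\cup\delta_B(b))$, and by adjunction between corestriction and restriction its pairing with $f$ equals $\langle f_*\delta_A(a),\delta_B(b)\rangle_L\in\Z/m$, the local Tate pairing over $L$ between $H^1(L,B^\star[m])$ and $H^1(L,B[m])$. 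Applying the classical finer Tate duality for $H^1$ (to $B$, over each $L$), the orthogonality of $f$ becomes the condition that for every finite extension $L/K$ and every $a\in A(L)$, the class $f_*\delta_A(a)$ lies in $\delta_{B^\star}(B^\star(L))$.

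At this stage the good-reduction hypothesis enters, via a Mazur-type identification: $\delta_A(A(L))$ coincides with the image of the flat cohomology $H^1_{fl}(\Spec\mathcal{O}_L,\mathcal{A}[m])\hookrightarrow H^1(L,A[m])$, and analogously for $B^\star$. Thus orthogonality of $f$ becomes: for every finite $L/K$, the pushforward $f_*$ on Galois cohomology carries the flat subgroup into the flat subgroup. The easy direction of the theorem---that an $\mathcal{O}_K$-lift $\tilde f$ forces orthogonality---is now immediate from functoriality of $H^1_{fl}$ under $\tilde f$ and compatibility of base change along each $\mathcal{O}_K\to\mathcal{O}_L$.

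The reverse implication, extracting a lift $\tilde f$ from the preservation of flat cohomology as $L$ varies, is the principal obstacle. The prime-to-$p$ component of $m$ is automatic, since the corresponding Néron kernels are étale and every $G_K$-morphism descends to $\mathcal{O}_K$-ffgs. For the $p$-primary component I would invoke an equivalence from integral $p$-adic Hodge theory (say via Breuil--Kisin modules, whose generic-fibre functor to $G_K$-representations is fully faithful on the correct subcategory), so that morphisms in the ffgs category correspond precisely to those $G_K$-morphisms preserving a canonical flat/crystalline subrepresentation under arbitrary base change. The crux is to show that the pointwise preservation-of-flat-cohomology hypothesis, taken across all finite $L/K$, matches the categorical lifting condition---effectively a Galois-descent argument exploiting full faithfulness of the integral $p$-adic Hodge equivalence. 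The hypothesis $p>3$ presumably enters both to keep the integral $p$-adic Hodge machinery well behaved and to avoid small-prime pathologies in the multilinear/symbol manipulations inside $K(K;A,B)$.
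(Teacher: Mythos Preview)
Your reduction of the $H^2$ orthogonality to an $H^1$ condition over all finite $L/K$ is correct and clean: the projection formula gives $\langle s_m(\{a,b\}_{L/K}),f\rangle=\langle f_*\delta_A(a),\delta_B(b)\rangle_L$, and Tate's $H^1$ duality over $L$ then says $f$ is orthogonal to $\img(s_m)$ iff for every finite $L/K$ the map $f_*$ carries $\img(H^1_{fl}(\mathcal{O}_L,\mathcal{A}[m])\to H^1(L,A[m]))$ into the corresponding flat subgroup for $B^\star$. The easy direction and the prime-to-$p$ part are fine, and are in content the same as the paper's Proposition~\ref{annih} and the étale case.

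The genuine gap is the converse for the $p$-primary part. You assert that ``preservation of the flat $H^1$ subgroup over all $L$'' forces $f$ to lift to an $\mathcal{O}_K$-morphism of finite flat group schemes, appealing to ``full faithfulness of the integral $p$-adic Hodge equivalence'' and ``Galois descent.'' But the generic-fibre functor from finite flat group schemes over $\mathcal{O}_K$ to $G_K$-modules is \emph{not} fully faithful once $e\geq p-1$ (Raynaud); that is exactly why the theorem has content. What you would need is a concrete argument that a $G_K$-map $A[p^n]\to B^\star[p^n]$ which sends $\mathcal{O}_L$-torsors to $\mathcal{O}_L$-torsors for every $L$ must itself extend over $\mathcal{O}_K$. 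Nothing you have written provides such an argument, and I know of no short route to it; this is the entire difficulty of the theorem.

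The paper takes a completely different path and never reduces to $H^1$. It uses the identification of $K(K;A,B)$ with a piece of the Albanese kernel of $A\times B$ (via \cite{Gaz12015}), then applies the Saito--Sato theorem on the Brauer--Manin pairing to show the orthogonal complement of the cycle map is the image of $H^2_{fl}(\mathcal{A},\mu_{p^n})\to H^2_{et}(A,\mu_{p^n})$ (Theorem~\ref{flat}). This flat/syntomic $H^2$ is then mapped into the integral crystalline cohomology $H^2_{Br}(\mathcal{A}/S)$ in Faltings' sense, which is proved to be $\bigwedge^2 H^1_{Br}(\mathcal{A}/S)$ as Breuil modules (Theorem~\ref{H2}); Breuil's anti-equivalence with finite flat group schemes then yields the $\mathcal{H}om_{ffgps}$ description. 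The decomposition for $A\times B$ finishes the argument. None of this machinery appears in your proposal, and it is what actually carries the hard direction.
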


When $m$ is coprime to $p$ the proof follows easily by a computation of Raskind and Spiess (\cite{Ras/Sp}) and it is unconditional on the prime $p$. When $m=p^n$ though the problem becomes much harder and this case will occupy most of this article. Our method involves computations with integral crystalline cohomology groups in the sense of Faltings (\cite{Fal}). Unfortunately this imposes the limitation $p>3$. We believe that the statement of theorem \ref{BIG} should be true for every prime $p$. In fact, in the last section we discuss some specific examples where one can give an alternative proof of theorem \ref{BIG} that holds without any limitation on $p$.
\vspace{2pt}
\subsection{History of local Galois symbols} Let us briefly recall the history behind the study of Galois symbols. 

\subsubsection{Milnor $K$-theory and the motivic Bloch-Kato conjecture} For a general field $K$ (not nece-ssarily $p$-adic) the higher Milnor $K$-groups of $K$ are defined to be the graded pieces of 
\[K^M_\star(K):=\frac{T^\star K^\times}{<a\otimes(1-a)>},\] where $T^\star K^\times$ is the tensor algebra over the multiplicative group $K^\times$ and $<a\otimes(1-a)>$ is the two-sided ideal generated by elements of the form $a\otimes(1-a)$, for $a\in K$ with $a\neq 0,1$.  The elements of $K^M_r(K)$ are traditionally denoted as symbols, that is linear combinations of $\{x_1,\cdots,x_r\}$, with $x_i\in K^\times$ for $i\in\{1,\cdots,r\}$. For $r\geq 0$ the group $K_r^M(K)$   is a geometric invariant of $K$, which has also a realization as motivic cohomology.

Moreover, there is a map to Galois cohomology, known as the Galois symbol of the motivic Bloch-Kato conjecture,
\[K^{M}_{r}(K)/m\rightarrow H^{r}(K,\mu_{m}^{\otimes r}).\] The latter is always an isomorphism (when $r=2$ this is due to Merkurjev and Suslin (\cite{Mer/Sus} and the general case is due to Voevodsky (\cite{Voevodsky2011})). The above, nowadays known as the  \textit{norm-residue isomorphism theorem},  has tremendous applications to both arithmetic and geometric problems such as class field theory and the Birch and Swinnerton-Dyer conjecture.
\vspace{1pt}
\subsubsection{Class Field Theory} 

S. Bloch in  \cite{Bloch2}  introduced a variant of $K_2^M(K)$, in order to study class field theory of arithmetic surfaces. We denote this variant  by $K(K;J,\G_m)$, where $J$ is the Jacobian variety of a smooth complete curve over a  field $K$. The key for his main theorem was to show  surjectivity of the corresponding local Galois symbol, $K(K;J,\G_m)/p^n\rightarrow H^2(K,J[p^n]\otimes\mu_{p^n})$, where $K$ is $p$-adic and the Jacobian  $J$ is assumed to have good reduction.
\vspace{1pt}
\subsubsection{Somekawa K-groups}  Later Somekawa (\cite{Som}), following a suggestion of K. Kato gene-ralized the above definition allowing more general coordinates. Namely, he defined a $K$-group $K(K;G_1,\cdots,G_r)$  attached to semi-abelian varieties $G_1,\cdots,G_r$ over a field $K$ (that is extensions of abelian varieties by torii). The group $K(K;G_1,G_2,\cdots, G_r)$ is a quotient of $\displaystyle\bigoplus_{L\supset K\text{ finite}}G_1(L)\otimes\cdots\otimes G_r(L)$  by two geometric relations and it is expected to have a motivic interpretation.\footnote{In fact it has been shown (\cite{KY}) that $K(K;G_1,\cdots,G_r)\stackrel{\simeq}{\longrightarrow} 
\Hom_{DM^{eff}}(\Z,G_1[0]\otimes\cdots\otimes G_r[0])$.}. In the special case when  $G_i=\G_m$ for all $i$, we recover the Milnor $K$-group  $K_{r}^{M}(K)$.  Moreover, for an integer $m\geq 1$ invertible in $K$, there is a generalized Galois symbol,
\begin{eqnarray}\label{symbol} K(K;G_1,\cdots,G_r)/m\stackrel{s_m}{\longrightarrow} H^r(K,G_1[m]\otimes\cdots\otimes G_r[m]).\end{eqnarray}

\subsubsection{Philosophy}
The Somekawa K-groups are objects of geometric interest that have been related to algebraic cycles. Via the Galois symbol, $K(K;G_1,\cdots, G_r)$ relates to a better understood algebraic object. In particular, the maps $s_m$ can be realized as special cases of cycle maps to \'{e}tale cohomology. In the philosophy of the Tate conjecture, it is an interesting question to see which classes in Galois cohomology arise from the image of the Galois symbol.

In particular for a $p$-adic field $K$ and an integer $r\geq 3$, the above map vanishes. When $r=2$ though, the
local Galois symbols $K(K;G_1,G_2)/m\stackrel{s_m}{\longrightarrow} H^2(K,G_1[m]\otimes G_2[m])$, for various choices of semi-abelian $G_1,G_2$, carry a lot of information and have been studied by numerous people in different frameworks.


\subsubsection{Zero cycles} The reason we are particularly  interested in the group $K(K;A,B)$ attached to abelian varieties $A,B$ is because it has significant applications to zero cycles on products of curves and abelian varieties. 

For a smooth projective variety $X$ over $K$, the group $CH_0(X)$ of zero cycles modulo rational equivalence has a filtration $CH_0(X)\supset A_0(X)\supset T(X)$. When $X$ has a $K$-rational point, the first quotient of the filtration is isomorphic to $\Z$, while the second is isomorphic to the $K$-rational points of an abelian variety. The group $T(X)$ is called the Albanese kernel of $X$ and it is the most wild part of the Chow group, over which we do not have enough control. The group $K(K;A,B)$ has been related to the Albanese kernel on products of curves by Raskind and Spiess (\cite{Ras/Sp}) and on abelian varieties by the current author (\cite{Gaz12015}). The simplest example is that of a product of two elliptic curves $E_{1}$, $E_{2}$ over $K$. In this case the group $K(K;E_{1},E_{2})$ turns out to be isomorphic to the Albanese kernel, $T(E_{1}\times E_{2})$, and the Galois symbol $s_{m}$ coincides with the cycle map to \'{e}tale cohomology $CH_{0}(E_{1}\times E_{2})/m\longrightarrow H^{4}((
E_{1}\times E_{2})_{et},\mu_{m}^{\otimes 2})$, when the latter is restricted to $T(E_{1}\times E_{2})$.

The map $K(K;A,B)\stackrel{s_{p^n}}{\longrightarrow}H^2(K,A[p^n]\otimes B[p^n])$ has been previously studied by numerous people, some examples including \cite{Ras/Sp}, \cite{Yam}, \cite{Mur/Ram}, \cite{Hir3}, \cite{Hir1}. The results though are only partial, imposing in most cases the strong $K$-rationality assumption that $A[p^n]\subset A(K)$ and $B[p^n]\subset B(K)$. Moreove, they  mostly cover the case of two elliptic curves, $E_1,E_2$. All these results required very deep computations of symbols and what  they all suggested is that the image of $s_{p^n}$ tends to be small relative to $H^2(K,A[p^n]\otimes B[p^n])$ and it depends very heavily on the ramification of $K$ and the reduction type of the abelian varieties. 



\subsection{Advantages of our approach and Applications}
Our main goals with theorem \ref{BIG} are first to unify all the different cases of good reduction in one single theorem that can explain all the geometry involved, second to go far beyond the world of elliptic curves and lastly to study small ramification phenomena. 


Theorem \ref{BIG} stems from an idea of K. Kato, who in fact expects that a very general duality theorem  about the image of $K(K;G_1,G_2)/m\rightarrow H^{2}(K,G_1[m]\otimes G_2[m])$ should exist for semi-abelian $G_1,G_2$ over a $p$-adic field $K$. Such a theorem would contain our theorem \ref{BIG} and the Merkurjev-Suslin theorem as the two edge cases. In this article we start with the good reduction case, and we hope in a forthcoming paper to consider the next step, that of abelian varieties of semistable reduction.

We note that indeed all the previously known computations can be though of as special cases of theorem \ref{BIG}. We give the following example.
\begin{exmp} For the self product $E\times E$  of an elliptic curve with good ordinary reduction over $K$ such that $E[p]\subset E(K)$, Murre and Ramakrishnan (\cite{Mur/Ram}) and Raskind and Spiess (\cite{Ras/Sp}) showed that $\img(s_p)=\Z/p$. This computation seen from our perspective, corresponds to the fact that there are no non-trivial homomorphisms of finite flat group schemes over $\mathcal{O}_K$ from $\mu_p$ to $\Z/p$.
\end{exmp}

The main advantage of our approach lies in our method of proving the main theorem. Our techniques are very different than the classical approach to the study of Galois symbols, which involves explicit computations of Hilbert symbols. In this article we suggest instead the use of $p$-adic cohomology theories, to show that strong relations exist between $K$-groups and zero cycles on the one side and $p$-adic Hodge theoretic objects on the other.  An inspiration for our approach comes from the work of Kato in \cite{Kato11}, where he relates the study of the classical Galois symbol $K_2^M(K)/p^n\rightarrow H^2(K,\mu_{p^n}^{\otimes 2})$ to the work of Fontaine-Messing (\cite{Fon-Mess}). 
\subsubsection{Small ramification} When the ramification index of $K$ is small, we obtain the following corollary, based on a classical result of Raynaud (\cite{Raynaud}).
\begin{cor} When the ramification index $e$ of $K$ satisfies $e<p-1$ and the abelian varieties $A$, $B$ have good reduction, the Galois symbol $s_{p^n}$ vanishes, for every $n\geq 1$. 
\end{cor}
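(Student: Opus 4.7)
The plan is a direct application of Theorem \ref{BIG} together with Raynaud's classical prolongation theorem for finite flat group schemes in the low-ramification range. Since the Tate duality pairing $(\star)$ is perfect, it suffices to show that the orthogonal complement of $\img(s_{p^n})$ coincides with the full group $\Hom_{G_{K}}(A[p^n], B^{\star}[p^n])$. By Theorem \ref{BIG}, this amounts to checking that every $G_K$-equivariant homomorphism $f \colon A[p^n] \to B^{\star}[p^n]$ admits a lift $\tilde f \colon \mathcal{A}[p^n] \to \mathcal{B}^{\star}[p^n]$ to a morphism of finite flat commutative group schemes over $\mathcal{O}_K$.

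Under the good reduction hypothesis, $\mathcal{A}[p^n]$ and $\mathcal{B}^{\star}[p^n]$ are finite flat commutative $\mathcal{O}_K$-group schemes of $p$-power order, whose generic fibers recover the $G_K$-modules $A[p^n]$ and $B^{\star}[p^n]$. Raynaud's theorem in \cite{Raynaud} asserts that when $e < p-1$, the generic fiber functor from the category of finite flat commutative $\mathcal{O}_K$-group schemes of $p$-power order to the category of finite continuous $G_K$-modules of $p$-power order is fully faithful. Consequently every such $f$ admits a unique prolongation $\tilde f$, the orthogonal complement of $\img(s_{p^n})$ exhausts $\Hom_{G_K}(A[p^n], B^{\star}[p^n])$, and perfectness of $(\star)$ forces $\img(s_{p^n}) = 0$.

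Because the substantive work has already been packaged into Theorem \ref{BIG}, the corollary itself has no genuine obstacle; the deduction is formal. The only point to verify is compatibility of hypotheses, which is immediate: the standing assumption $p > 3$ of Theorem \ref{BIG} and the good reduction of $A, B$ are both retained, and Raynaud's prolongation applies in exactly the advertised range $e < p-1$, so no further case analysis is needed.
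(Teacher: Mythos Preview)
Your proof is correct and follows essentially the same approach as the paper: both invoke Theorem \ref{BIG} to identify the orthogonal complement of $\img(s_{p^n})$ with $\mathcal{H}om_{ffgps/\mathcal{O}_K}(\mathcal{A}[p^n],\mathcal{B}^\star[p^n])$, then apply Raynaud's theorem (full faithfulness of the generic fiber functor when $e<p-1$) to conclude this equals the full group $\Hom_{G_K}(A[p^n],B^\star[p^n])$, forcing $\img(s_{p^n})=0$ by perfectness of the pairing.
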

\subsubsection{Limit information} Not imposing any limitations on the base field $K$ allows us to pass to the limit for subgroups of finite index. When we pass to the limit for $m\geq 1$,  we get a pairing
\[\lim\limits_{\longleftarrow}H^2(K,A[m]\otimes B[m])\times\Hom_{G_K}(T(A),T(B^\star))\rightarrow\Q/\Z,\] where $T(A), T(B)$ are the total Tate modules of $A, B$ respectively. Theorem \ref{BIG} together with the classical result of Tate on $p$-divisible groups (\cite{Tate1}) give us the following corollary.
\begin{cor}\label{limit1} Let $A,B$ be abelian varieties over $K$ with good reduction. The image of the Galois symbol $K(K;A,B)\rightarrow\lim\limits_{\longleftarrow}H^2(K,A[m]\otimes B[m])$ lies on the left kernel of the pairing
\[(\star')\;\lim\limits_{\longleftarrow}H^2(K,A[m]\otimes B[m])\times\Hom_{G_K}(T(A),T(B^{\star}))\rightarrow\Q/\Z.\] Moreover, it is $p^s$-torsion, where $s$ is a nonnegative integer that depends only on the ramification index of $K$.
\end{cor}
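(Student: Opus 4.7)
The plan is to combine Theorem~\ref{BIG} at each finite level with Tate's isogeny theorem for $p$-divisible groups over $\mathcal{O}_K$, decomposing the Tate-module pairing $(\star')$ into its $\ell$-parts. The left-kernel assertion will reduce to producing, for each $\phi\in\Hom_{G_K}(T(A),T(B^\star))$, a finite flat lift at every finite level so that Theorem~\ref{BIG} annihilates the pairing with the image of the Galois symbol. The $p^s$-torsion assertion requires in addition a uniform-in-$n$ exponent bound on the finite-level images, supplied by a ramification-dependent input from integral $p$-adic Hodge theory.

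For the prime-to-$p$ component, good reduction combined with N\'{e}ron--Ogg--Shafarevich shows that for every $\ell\neq p$ the $G_K$-modules $A[\ell^n]$ and $B^\star[\ell^n]$ are unramified, hence $\mathcal{A}[\ell^n]$ and $\mathcal{B}^\star[\ell^n]$ are étale group schemes over $\mathcal{O}_K$. Every $G_K$-equivariant $f:A[\ell^n]\to B^\star[\ell^n]$ thus extends uniquely to an étale homomorphism, and Theorem~\ref{BIG} forces $\img(s_{\ell^n})=0$. Writing $I:=\img\bigl(K(K;A,B)\to\lim\limits_{\longleftarrow}H^2(K,A[m]\otimes B[m])\bigr)$, its prime-to-$p$ component is therefore already zero, so $I$ automatically pairs trivially with $\prod_{\ell\neq p}\Hom_{G_K}(T_\ell A, T_\ell B^\star)$.

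It remains to treat the $p$-part. Given $\phi\in\Hom_{G_K}(T_pA,T_pB^\star)$, Tate's theorem on $p$-divisible groups (\cite{Tate1}) produces a canonical lift $\widetilde\phi:\mathcal{A}[p^\infty]\to\mathcal{B}^\star[p^\infty]$ over $\mathcal{O}_K$, whose $p^n$-torsion is a finite flat lift of the reduction $\phi_n:A[p^n]\to B^\star[p^n]$. Theorem~\ref{BIG} places each $\phi_n$ in the annihilator of $\img(s_{p^n})$, and passing to the inverse limit completes the left-kernel assertion. For the exponent bound, let $L_n\subset\Hom_{G_K}(A[p^n],B^\star[p^n])$ denote the subgroup of homomorphisms that lift to finite flat group schemes over $\mathcal{O}_K$. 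By the perfect pairing of Theorem~\ref{BIG}, the exponent of $I_n:=\img(s_{p^n})$ equals that of the quotient $Q_n:=\Hom_{G_K}(A[p^n],B^\star[p^n])/L_n$. When $e<p-1$, Raynaud's full-faithfulness theorem gives $Q_n=0$ (recovering the preceding corollary); for arbitrary $e$, a standard integral $p$-adic Hodge theoretic input bounds the exponent of $Q_n$ by some $p^s$ with $s=s(e)$ independent of $n$. Since $I$ embeds into $\prod_n I_n$ via the natural projections, the uniform vanishing $p^s\cdot I_n=0$ then yields $p^s\cdot I=0$.

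The hard point is this uniform bound on $Q_n$ in the wildly ramified regime $e\ge p-1$: Tate's theorem only gives $\lim\limits_{\longleftarrow}L_n=\Hom_{G_K}(T_pA,T_pB^\star)$ in the limit, and extracting a level-independent bound on the obstruction to lifting a single $G_K$-equivariant homomorphism requires uniform control of Galois-stable lattices via the Breuil or Kisin module classification of finite flat group schemes of $p$-power order over $\mathcal{O}_K$, which is where the dependence of $s$ on the ramification index ultimately enters.
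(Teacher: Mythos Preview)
Your argument is correct and essentially matches the paper's: Tate's theorem on $p$-divisible groups for the left-kernel claim, and a uniform-in-$n$ ramification-dependent bound for the $p^s$-torsion. For the latter, the paper cites Bondarko (\cite{Bond}), who proves precisely the ``weak fullness'' statement you identify as the hard point: for any $G_K$-map $f:A[p^n]\to B^\star[p^n]$, some $p^s f$ (with $s$ depending only on $e$) lifts to a map of finite flat group schemes over $\mathcal{O}_K$.
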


The above corollary implies in particular that for an abelian variety $A$ of dimension $d$ over $K$ with good reduction, the  cycle map  to \'{e}tale cohomology,
\[CH_{0}(A)\rightarrow H^{2d}_{et}(A,\Z_p(1)^{\otimes d}),\] when restricted to the Albanese kernel, $T(A)$, has finite image.
 This agrees with a well-known conjecture of Colliot-Th\'{e}l\`{e}ne (\cite{CT2}), which states that for a smooth projective variety $X$ over a $p$-adic field, the group $T(X)$ is the direct sum of its maximal divisible subgroup and a finite group. So far a weaker version of this conjecture has been established by S. Saito and K. Sato (\cite{SS1}). Namely, they prove that the degree zero subgroup $A_0(X)$ is the direct sum of a coprime-to-$p$-divisible group and a finite group.

Our computations indicate that for abelian varieties with good reduction the $p$-part of the Albanese kernel also behaves well, at least with respect to the cycle map. It remains to show that the above map has at most a finite kernel. 



\subsection{Relation to the finite Bloch-Kato cohomology} 

Bloch and Kato in \cite{Bloch-Kato3} defined for a $\hat{\Z}$-module $T$ of finite rank with a continuous $G_K$-action, subgroups \[H^1_e(K,T)\subset H^1_f(K,T)\subset H^1_g(K,T)\subset H^1(K,T),\] called the exponential part, the finite part and the geometric part respectively. For example, the finite part is defined as follows,
\[H^1_f(K,T):=\ker(H^1(K,T)\longrightarrow H^1(K,B_{cris}\otimes_{\hat{\Z}}T)).\] 

When $T$ is a $G_K$-stable lattice inside a crystalline Galois representation $V$, the group $H^1_f(K,T)$ classifies crystalline extensions of $T$ by $\Z_p$. 
Moreover, Bloch and Kato proved that under the local Tate duality pairing, the exact annihilator of the exponential part is the geometric part of the dual, while the exact annihilator of the finite part is the finite part of the dual.
Note that when $T$ is the Tate module of an abelian variety $A$ over $K$, all the three subgroups are equal and they coincide with $\img[A(K)\hookrightarrow H^1(K,T)]$, which recovers the finer result of Tate for $H^1$. 

\subsection*{Some recent progress} Iovita and Marmora in \cite{Iovita/Marmora2015} suggested a more general definition of $H^i_f(K,T)$ for $i=0,1,2$ and for $T$ a $G_K$-stable lattice inside a crystalline Galois representation with small Hodge-Tate weights. Moreover, they extended the definition to finite Galois modules of the form $T'/p^n$ with $T'$ as above.  Connecting this to the work of Bloch and Kato, we have the following expectation.
\begin{expectation} For a Galois module $T$ as above, the subgroups $H^2_f(K,T)$ and $H^0_f(K,T^\star)$ are orthogonal complements to each other under Tate duality. 
\end{expectation} In fact, we believe that theorem \ref{BIG} is the first step towards this expectation, considering $T=A[p^n]\otimes B[p^n]$, for abelian varieties $A.B$ with good reduction. It is not hard to see that the orthogonal complement to the image of the generalized Galois symbol coincides with the subgroup $H^0_f(K,A[p^n]\otimes B[p^n])$. It remains to check that $\img(s_{p^n})$ is exactly $H^2_f(K,A[m]\otimes B[m])$, which will be precisely the $H^2$-analogue of Bloch and Kato's computation for abelian varieties. 

\subsection{Outline of the proof and main results} After reviewing some basic facts about local Tate duality in section 2, we give some first indications towards theorem \ref{BIG}, including proving the theorem for integers $m$ that are coprime to $p$. In the rest of the article we focus on proving the theorem for $m=p^n$. Our main tool for the proof is the interplay between zero cycles and the $K$-group $K(K;A,B)$. The key is a result of S. Saito and K. Sato (\cite{SS-14}) about zero cycles and Brauer groups. Their computation allows us to relate the orthogonal complement of the Galois symbol to a $p$-adic Hodge theoretic object, namely to a syntomic cohomology group.

 In section 3 we focus on the case of a single abelian variety $A$ over $K$. We consider the Albanese kernel $T(A)$ of $A$. It is well known that the cycle map  to \'{e}tale cohomology when restricted to $T(A)$ maps to $ H^{2}(K,\bigwedge^{2}A[p^{n}])$. Moreover, this map has been related to a Galois symbol by the current author (\cite{Gaz12015}). Using the main results of \cite{Gaz12015} and \cite{SS-14} we prove the following theorem.
\begin{thm}\label{flat2} Let $p$ be an odd prime. Let $A$ be an abelian variety over $K$ of dimension $d$ with good reduction. Let $\mathcal{A}$ be the N\'{e}ron model of $A$. Under the Tate duality pairing \[(\star\star)\;H^{2}(K,\bigwedge^{2}A[p^{n}])\times H^{0}(K,\Hom(\bigwedge^{2}A[p^{n}],\mu_{p^{n}}))\rightarrow\Z/p^{n},\]
the orthogonal complement of the image of the cycle map $T(A)\stackrel{c_{p^{n}}}{\rightarrow} H^{2}(K,\bigwedge^{2}A[p^{n}])$, is the image of the composition
$H^{2}(\mathcal{A}_{fl},\mu_{p^{n}})\rightarrow H^{2}(A_{et},\mu_{p^{n}})\rightarrow H^{0}(K,\Hom(\bigwedge^{2}A[p^{n}],\mu_{p^{n}})),$ where $H^{2}(\mathcal{A}_{fl},\mu_{p^{n}})$ is the flat cohomology of $\mathcal{A}$.
\end{thm}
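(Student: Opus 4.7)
The plan is to combine three inputs: (i) the identification from \cite{Gaz12015} of $c_{p^n}$ on $T(A)$ with a generalized Galois symbol, so that $c_{p^n}(\{a,b\})$ is the cup product $\delta(a)\cup\delta(b)$ of Kummer classes projected onto the wedge component, where $\delta\colon A(K)\to H^1(K,A[p^n])$ is the Kummer connecting map; (ii) the Hochschild--Serre spectral sequence for $A\to\Spec K$, together with the natural identification $H^2_{et}(\bar A,\mu_{p^n})\simeq \Hom(\wedge^2 A[p^n],\mu_{p^n})$ of $G_K$-modules coming from K\"unneth for an abelian variety; and (iii) the Saito--Sato duality theorem \cite{SS-14} relating zero cycles on $A$ to flat cohomology of the N\'eron model $\mathcal A$.

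First I would lift the problem to $H^2_{et}(A,\mu_{p^n})$ via the Hochschild--Serre edge map
\[
H^2_{et}(A,\mu_{p^n})\longrightarrow H^0(K,H^2_{et}(\bar A,\mu_{p^n}))=H^0(K,\Hom(\wedge^2 A[p^n],\mu_{p^n})),
\]
observing that the composition appearing in Theorem \ref{flat2} factors through this edge. The core geometric step is then to establish the compatibility of two pairings on $T(A)$: the composition of $c_{p^n}$ with the Tate duality pairing, and a \emph{Brauer pullback pairing} $T(A)\times H^2_{et}(A,\mu_{p^n})\to H^2(K,\mu_{p^n})=\Z/p^n$, defined by pulling an \'etale cohomology class back along a degree-zero cycle (using the theorem of the square to make this well defined on the Albanese kernel). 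The description of $c_{p^n}$ as a cup product of Kummer classes from (i) shows that these pairings agree up to the edge map: for $z\in T(A)$ and $\tilde f\in H^2_{et}(A,\mu_{p^n})$ with edge-image $f$, one has $z\cdot\tilde f=\langle c_{p^n}(z),f\rangle$.

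With this compatibility in hand, the Saito--Sato theorem \cite{SS-14} directly identifies the kernel of the Brauer pullback pairing inside $H^2_{et}(A,\mu_{p^n})$ with the image of $H^2(\mathcal A_{fl},\mu_{p^n})\to H^2_{et}(A,\mu_{p^n})$; pushing this kernel across the Hochschild--Serre edge map recovers precisely the subgroup of $H^0(K,\Hom(\wedge^2 A[p^n],\mu_{p^n}))$ claimed in the theorem.

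The main obstacle I foresee is the compatibility of the two pairings, which requires careful control of the lower Hochschild--Serre filtration $F^{\bullet}H^2_{et}(A,\mu_{p^n})$. Classes coming from $H^2(K,\mu_{p^n})$ or from the $H^1(K,H^1_{et}(\bar A,\mu_{p^n}))$-piece depend only additively on $K$-points of $A$, so they must pair trivially with the antisymmetric combination $\{a,b\}-\{b,a\}$ that contributes to $T(A)$; conversely, the $\wedge^2$ projection on the Tate-duality side is designed precisely to kill these linear contributions. Making this matching rigorous, and verifying that the Saito--Sato kernel description---originally phrased for regular proper models---transports cleanly to the N\'eron model $\mathcal A$ under the good reduction hypothesis (with $p$ odd), will constitute the bulk of the technical work in section $3$.
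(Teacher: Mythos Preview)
Your outline follows essentially the same route as the paper: Saito--Sato for the Brauer--Manin pairing, plus the Hochschild--Serre decomposition, plus the compatibility of the Brauer pairing with Saito's perfect pairing $\langle,\rangle_2$ on $H^{2d}\times H^2$. But you have misdiagnosed where the work lies and glossed over the two steps that actually require an argument.

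First, the compatibility of pairings that you flag as ``the main obstacle'' is not the hard part: it is already established (see \cite{Gaz12015}, lemma 6.4, quoted in the paper's setup). What Saito--Sato gives you is that the right kernel of the Brauer--Manin pairing inside $Br(A)[p^n]$ is $Br(\mathcal A)[p^n]$; it does \emph{not} directly say anything about $H^2_{fl}(\mathcal A,\mu_{p^n})$. Passing from the Brauer statement to the claim that $\ker(c_{p^n}^\star\circ h_2)=\img(H^2_{fl}(\mathcal A,\mu_{p^n})\to H^2_{et}(A,\mu_{p^n}))$ needs the Kummer sequences for $\mathcal A$ and $A$, a snake-lemma comparison of the two, and the observation that $\Pic(\mathcal A)\twoheadrightarrow\Pic(A)$ is surjective (so that $\ck(\iota_1)=0$). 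Your sentence ``directly identifies the kernel \dots with the image of $H^2(\mathcal A_{fl},\mu_{p^n})$'' skips this.

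Second, ``pushing this kernel across the Hochschild--Serre edge map'' is not automatic either. To get an \emph{equality} on the graded piece $gr^0(G^\bullet)=H^0(K,\Hom(\wedge^2 A[p^n],\mu_{p^n}))$, rather than just a containment, the paper runs another snake-lemma argument on the filtrations $G^\bullet, I^\bullet$ and must show $\ck(f_1)=0$, i.e.\ that $c_{p^n}$ is injective on $(CH_0(A)/T(A))/p^n$. This is checked via the degree map and the Kummer injection $A(K)/p^n\hookrightarrow H^1(K,A[p^n])$. Your heuristic about ``linear contributions'' vanishing on $\{a,b\}-\{b,a\}$ does not substitute for this step; in particular the Brauer--Manin pairing is already defined on all of $CH_0(A)$ without any theorem-of-the-square trick, and the restriction to $T(A)$ is handled on the dual side via the filtration, not on the cycle side.
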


In the remaining sections we connect the group $H^{2}(\mathcal{A}_{fl},\mu_{p^{n}})$ to a group of homomorphisms of finite flat group schemes. To do that, we use the work of Breuil (\cite{Bre1}, \cite{Bre2}) and Faltings (\cite{Fal}).
We first replace flat with syntomic cohomology in section 4. Then we pass to integral crystalline cohomology in the sense of Faltings (\cite{Fal}). For an abelian scheme $\mathcal{A}$ over $\Spec(\mathcal{O}_{K})$, we denote these groups by  $H^{\star}_{Br}(\mathcal{A}/S)$.  For $i<p-1$, the group $H^{i}_{Br}(\mathcal{A}/S)$ is a strongly divisible module over a specific ring $S$ equipped with divided powers.  In particular, it is a filtered module with a Frobenius action and a connection $\nabla$. The cases we are interested in are when $i=1$ and $2$. After discussing some generalities about strongly divisible modules, in section 6 we connect $H^{1}_{Br}(\mathcal{A}/S)$ to the Breuil module corresponding to the $p$-divisible group $\lim\limits_{\longrightarrow}\mathcal{A}[p^{n}]$. Moreover, we prove the following theorem.
\begin{thm} There is a natural isomorphism of filtered free modules \[\rho:\bigwedge^{2}H^{1}_{Br}(\mathcal{A}/S)\stackrel{\simeq}{\longrightarrow} H^{2}_{Br}(\mathcal{A}/S).\]
\end{thm}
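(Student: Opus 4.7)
The approach is to imitate, in the category of strongly divisible $S$-modules, the classical identification of the cohomology of an abelian variety with the exterior algebra on $H^1$. The map $\rho$ is constructed from cup product: graded commutativity factors $H^1_{Br}(\mathcal{A}/S) \otimes_S H^1_{Br}(\mathcal{A}/S) \xrightarrow{\cup} H^2_{Br}(\mathcal{A}/S)$ through $\bigwedge^2 H^1_{Br}(\mathcal{A}/S)$. Since cup product in integral crystalline cohomology respects the Hodge filtration, the Frobenius $\phi$, and the connection $\nabla$, $\rho$ is automatically a morphism of filtered free $S$-modules, and all the content of the theorem lies in proving it is bijective.

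For bijectivity I would exploit the Hopf algebra structure coming from the group law $\mu: \mathcal{A} \times_{\mathcal{O}_K} \mathcal{A} \to \mathcal{A}$, the identity section $e$, and inversion $[-1]$. Combined with an integral K\"unneth isomorphism $H^n_{Br}(\mathcal{A} \times_{\mathcal{O}_K} \mathcal{A}/S) \cong \bigoplus_{i+j=n} H^i_{Br}(\mathcal{A}/S) \otimes_S H^j_{Br}(\mathcal{A}/S)$ in degrees $n \leq 2$, the standard argument using the two partial identity sections $(\mathrm{id},e)$ and $(e,\mathrm{id})$ shows that every class $x \in H^1_{Br}(\mathcal{A}/S)$ is primitive: $\mu^*(x) = x \otimes 1 + 1 \otimes x$. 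Expanding $\mu^*(x \cup y) = \mu^*(x) \cup \mu^*(y)$ and isolating the $H^1_{Br} \otimes_S H^1_{Br}$ component yields $x \otimes y - y \otimes x$, which is visibly injective on $\bigwedge^2 H^1_{Br}(\mathcal{A}/S)$, so $\rho$ is injective. Surjectivity then follows by a rank comparison: both source and target are free $S$-modules of rank $\binom{2d}{2}$ (the target by Faltings' theory, the source since $H^1_{Br}(\mathcal{A}/S)$ has rank $2d$ by the previous section), and $\rho$ becomes an isomorphism after inverting $p$ because it reduces to the classical identification $\bigwedge^2 H^1_{dR}(A/K) \cong H^2_{dR}(A/K)$; an injective morphism of free $S$-modules of equal finite rank that is an isomorphism after inverting $p$ has $p$-torsion-free cokernel of the same generic rank, and is therefore an isomorphism integrally.

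The main obstacle is the integral K\"unneth formula at the level of strongly divisible modules. Rationally this is the standard crystalline K\"unneth theorem, but integrally one must verify that the natural map lands inside, and is an isomorphism onto, the strongly divisible subobject; equivalently, the Faltings--Breuil functor must commute with external products of abelian schemes in the range $i+j < p-1$. The hypothesis $p > 3$ is precisely what ensures $2 < p-1$, placing us in the regime where Faltings' integral crystalline cohomology has all the required good properties, so the K\"unneth statement should follow from the corresponding assertions on individual filtered pieces, Frobenius, and connection, each of which reduces to K\"unneth for free modules. Once this is secured, the Hopf algebra argument above produces $\rho$ as the desired natural isomorphism.
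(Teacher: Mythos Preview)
Your approach to bijectivity on underlying $S$-modules via the Hopf algebra structure is reasonable and close in spirit to what underlies the reference the paper cites (Berthelot--Breen--Messing) for that step. However, there are two genuine gaps.

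First, your surjectivity argument is incorrect. You write that an injective map of free $S$-modules of equal rank which becomes an isomorphism after inverting $p$ ``has $p$-torsion-free cokernel \dots\ and is therefore an isomorphism integrally.'' But the cokernel of such a map is \emph{precisely} $p$-power torsion, not $p$-torsion-free: multiplication by $p$ on $\Z_p$ already gives a counterexample. Injectivity plus rational isomorphism plus equal rank does \emph{not} force integral surjectivity. The paper avoids this trap by simply invoking the integral result of Berthelot--Breen--Messing for the underlying $S$-module isomorphism; your primitivity argument gives injectivity, but you still need a separate reason for surjectivity.

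Second, and more seriously, you assert that ``all the content of the theorem lies in proving it is bijective.'' This is false for filtered objects: a bijective morphism of filtered modules need not be a filtered isomorphism, because the inverse need not preserve the filtration. Concretely, you must show $\rho^{-1}(\Fil^2 H^2_{Br}) \subset \Fil^2(\bigwedge^2 H^1_{Br})$, i.e.\ that $\rho$ is \emph{strict}. The paper devotes the bulk of its proof (Steps 2 and 3) to exactly this point: it computes the associated graded pieces $gr^0$ and $gr^1$ on both sides by identifying them with Hodge cohomology groups of $\mathcal{A}$, and checks that $\rho$ induces isomorphisms there. Without this, you have only an $S$-linear bijection compatible with $\phi$ and $\nabla$, not an isomorphism in $\Mod^{\phi,N}_{[0,2]}$. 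Strictness is not automatic in Breuil's categories over a ramified base the way it is in Fontaine--Laffaille theory, so this step cannot be skipped.
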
 We note that some of these computations might be known to the experts, but we could not find any reference in the literature.

The proof of theorem \ref{BIG} is completed in section 7. We then finish the article  by discussing some applications to zero cycles and providing some specific examples when the Galois symbol is non trivial.

\subsection{Notation} In all this draft $K$ will be a finite extension of $\Q_{p}$ with ring of integers $\mathcal{O}_{K}$ and residue field $k$. We will denote by $W=W(k)$ the ring of Witt vectors over $k$. We fix an algebraic closure $\overline{K}$ of $K$. For an extension $L$ of $K$ and a variety $X$ over $K$, we will denote by $X_{L}$ its base change to $L$.

For an abelian variety $A$ and a positive integer $m$, we will denote by $A[m]$ the $m$-torsion points of $A_{\overline{K}}$.  Moreover, we will denote by $A^{\star}$ the dual abelian variety of $A$.

For an abstract abelian group $C$ we will denote by $C^{\vee}:=\Hom(C,\Q/\Z)$ and by $C^{\star}:=\Hom(C,\Q/\Z(1))$, where $\Q/\Z(1)$ is the Tate twist.

For finite flat group schemes $\mathcal{P},\mathcal{Q}$ over $\mathcal{O}_{K}$, we will denote by $\mathcal{H}om_{ffgps/\mathcal{O}_{K}}(\mathcal{P},\mathcal{Q})$ the group of homomorphisms of finite flat group schemes defined over $\Spec(\mathcal{O}_{K})$.

Throughout the article, we will be working with many different Grothendieck sites and cohomology theories. In each case we will use a subscript, for example $H^{\star}(X_{fl},-)$ for the flat cohomology. By abuse of notation, sometimes we will omit the subscript whenever we work with \'{e}tale cohomology. Moreover, when we have a morphism of Grothendieck topologies, we will write the map as a functor on the corresponding categories of abelian sheaves, for example $X_{et}\rightarrow X_{fl}$.
\subsection{Acknowledgements} I would like to express my deepest gratitude to my Ph.D thesis advisor, Pr. Kazuya Kato. There are not enough words that can express my gratitude. This project was initiated under K. Kato's supervision and guidance. In fact, Pr. Kato shared his vision and ideas with me, helped me very significantly with the outline of the proof and gave precious lectures to me about $p$-adic Hodge theory. Additionally, I am deeply grateful to my post doc mentor at the University of Michigan, Pr. Bhargav Bhatt, who was very kind and eager to help me with the crystalline cohomology computations, and moreover he  proof read my paper and gave me very useful suggestions. Furthermore, I would like to thank Pr. Christophe Breuil for a very useful discussion about his theory of Breuil modules.

This project was started while at the University of Chicago, continued while the author was visiting the SFB program on higher invariants at the University of Regensburg and completed while at the University of Michigan. I would like to thank all three institutions for their support and hospitality.
\medskip
\section{The Galois symbol and Tate duality}  In this section we review the definition of the local Tate duality and define the symbolic part of Galois cohomology. We then show some first indications towards theorem (\ref{BIG}).
\subsection{Local Tate duality}
Let $A,B$ be abelian varieties over $K$. Let $\;B^{\star}$ be the dual abelian variety of $B$.
Let $m\geq 1$ be a positive integer, possibly divisible by $p$. For the finite $G_{K}$-module $A[m]\otimes B[m]$, local Tate duality gives us a perfect pairing of finite abelian groups
\[H^{2}(K,A[m]\otimes B[m])\times H^{0}(K,\Hom(A[m]\otimes B[m],\mu_{m}))\rightarrow\Z/m.\]
We start by giving a different description of the group $H^{0}(K,\Hom(A[m]\otimes B[m],\mu_{m}))$.
\begin{lem}\label{adj} The group $H^{0}(K,\Hom(A[m]\otimes B[m],\mu_{m}))$ is canonically isomorphic to the group  $\Hom_{G_{K}}(A[m],B^{\star}[m])$.
\end{lem}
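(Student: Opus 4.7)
The plan is a short functorial computation combining the tensor-hom adjunction with the Weil pairing. The main ingredients are both standard and the statement really is a direct consequence once these are set up; there is no genuine obstacle.

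First I would apply the tensor-hom adjunction at the level of $G_K$-modules. For any finite $G_K$-modules $M$, $N$, $P$ there is a canonical isomorphism
\[
\Hom(M \otimes N, P) \;\xrightarrow{\;\sim\;}\; \Hom\bigl(M,\Hom(N,P)\bigr),
\]
given by $f \mapsto (x \mapsto (y \mapsto f(x \otimes y)))$. This isomorphism is $G_K$-equivariant, because the $G_K$-action on each Hom group is by conjugation and the adjunction is natural in all three arguments. Applied with $M = A[m]$, $N = B[m]$, $P = \mu_m$, this gives a $G_K$-equivariant isomorphism
\[
\Hom(A[m] \otimes B[m],\mu_m) \;\simeq\; \Hom\bigl(A[m],\Hom(B[m],\mu_m)\bigr).
\]

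Next I would invoke the Weil pairing for $B$, which yields a canonical isomorphism of $G_K$-modules
\[
\Hom(B[m],\mu_m) \;\simeq\; B^{\star}[m].
\]
Substituting this into the previous display produces a $G_K$-equivariant isomorphism
\[
\Hom(A[m]\otimes B[m],\mu_m) \;\simeq\; \Hom(A[m], B^{\star}[m]).
\]

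Finally, I would take $G_K$-invariants of both sides. On the left this yields $H^{0}(K,\Hom(A[m]\otimes B[m],\mu_m))$ by definition, and on the right the $G_K$-fixed points of $\Hom(A[m], B^{\star}[m])$ are precisely those homomorphisms of abelian groups that commute with the $G_K$-action, i.e.\ $\Hom_{G_K}(A[m],B^{\star}[m])$. Chaining the two isomorphisms, which are canonical, gives the claimed identification. The only point worth a sentence of care is the check that the adjunction isomorphism is Galois-equivariant, which is immediate from naturality: if $\sigma \in G_K$ acts on a Hom group by $(\sigma \cdot f)(x) = \sigma f(\sigma^{-1}x)$, both constructions transport across the adjunction in the same way.
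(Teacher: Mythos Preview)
Your proof is correct and follows essentially the same approach as the paper: tensor--hom adjunction followed by the Weil pairing identification $\Hom(B[m],\mu_m)\simeq B^{\star}[m]$, then taking $G_K$-invariants. The only cosmetic difference is the order in which you take invariants (at the end rather than after the adjunction step), which makes no real difference.
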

\begin{proof} First, by the definition of Galois cohomology we have
$H^{0}(K,\Hom(A[m]\otimes B[m],\mu_{m}))=(\Hom(A[m]\otimes B[m],\mu_{m})))^{G_{K}}$. Then using adjoint functors, we obtain an isomorphism  \[\Hom(A[m]\otimes B[m],\mu_{m})^{G_{K}}\stackrel{\simeq}{\longrightarrow}\Hom_{G_{K}}(A[m],\Hom(B[m],\mu_{m})).\]
Finally, the Weil pairing yields an isomorphism of $G_{K}$-modules,
$\Hom(B[m],\mu_{m})\simeq B^{\star}[m]$ from which the lemma follows.

\end{proof}
Using lemma \ref{adj}, we give a precise description of the Tate duality pairing,
\[(\star)\;\;\;H^{2}(K,A[m]\otimes B[m])\times \Hom_{G_{K}}(A[m],B^{\star}[m])\stackrel{<,>}{\longrightarrow}\Z/m.\]
Let $\{f_{\sigma,\tau}\}_{\sigma,\tau\in G}\subset A[m]\otimes B[m]$ be a 2-cocycle and $g:A[m]\rightarrow B^{\star}[m]$ be a $G_{K}$- homomorphism. Then $g$ induces a push-forward $g_{\star}(\{f_{\sigma,\tau}\})\in H^{2}(K,B^{\star}[m]\otimes B[m])$. Using the Weil pairing, $B^{\star}[m]\otimes B[m]\rightarrow \mu_{m}$, the Tate pairing $<\{f_{\sigma,\tau}\},g>$ is defined to be the class of $g_{\star}(\{f_{\sigma,\tau}\})$ in $H^{2}(K,\mu_{m})$. The latter group is isomorphic to $\Z/m$, since it coincides with the $m$-torsion points of the Brauer group, $Br(K)$, which is isomorphic to $\Q/\Z$.

From now we will refer to this Tate pairing as pairing $(\star)$.
\subsection{The Galois symbol} The Somekawa $K$-group $K(K;A,B)$ attached to the abelian varieties $A,B$ is defined as a quotient \[K(K;A,B)=(\bigoplus_{L/K}A(L)\otimes B(L))/R,\] where the sum is under all finite extensions $L$ of $K$. The relations $R$ describe a projection formula and a reciprocity relation arising from function fields of curves over $K$. In this article we won't need the exact generators of $R$.

The elements of $K(K;A,B)$ are traditionally denoted as symbols. Namely the generators of  $K(K;A,B)$ are of the form $\{a,b\}_{L/K}$, where $a\in A(L),b\in B(L)$. For a positive integer  $m\geq 1$, the Galois symbol $s_{m}$ is defined as follows,
\begin{eqnarray*}s_{m}:&&K(K;A,B)/m\rightarrow H^{2}(K,A[m]\otimes B[m])\\
&&\{a,b\}_{L/K}\rightarrow\Cor_{L/K}(\delta_{1}(a)\cup\delta_{2}(b)),
\end{eqnarray*} where $\delta_{1}$ (resp. $\delta_{2}$) is the connecting homomorphism $A(L)\rightarrow H^{1}(L,A[m])$ of the long exact sequence arising from the Kummer sequence (see subsection \ref{intro1}) of $A$ (resp. B), $\cup$ is the cup product in Galois cohomology and $\Cor_{L/K}:H^{2}(L,A[m]\otimes B[m])\rightarrow H^{2}(K,A[m]\otimes B[m])$ is the corestriction.
\begin{defn} We define the symbolic part of $H^{2}(K,A[m]\otimes B[m])$ to be the image of the symbol map $s_{m}$ and we denote it by
$H^{2}_{s}(K,A[m]\otimes B[m])$.
\end{defn}
We now restate the main theorem of this paper.
\begin{thm}\label{BIG2} Let $A$, $B$ be abelian varieties over $K$ with good reduction. Let $\mathcal{A}$, $\mathcal{B}^{\star}$ be the N\'{e}ron models of $A$ and $B^{\star}$ respectively. Under the Tate duality pairing $(\star)$, the orthogonal complement of the symbolic part $H^{2}_{s}(K,A[m]\otimes B[m]))$ is the group $\mathcal{H}om_{ffgps/\mathcal{O}_{K}}(\mathcal{A}[m],\mathcal{B}^{\star}[m])$.
\end{thm}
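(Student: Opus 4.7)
The plan is to reduce the bilinear problem for $K(K;A,B)$ to the alternating single-variety problem handled by Theorem~\ref{flat2}, and then to convert the resulting flat cohomology group on the Néron model into homomorphisms of finite flat group schemes using the $p$-adic Hodge theoretic machinery developed in Sections~4--6. The prime-to-$p$ case is easy: if $(m,p)=1$ then $\mathcal{A}[m]$ and $\mathcal{B}^{\star}[m]$ are étale over $\mathcal{O}_K$, so $\mathcal{H}om_{ffgps/\mathcal{O}_K}(\mathcal{A}[m],\mathcal{B}^{\star}[m])=\Hom_{G_K}(A[m],B^{\star}[m])$ automatically, and the claim reduces to the vanishing of $s_m$ up to the relevant $G_K$-invariants, which is contained in the Raskind--Spiess computation cited in the introduction. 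From here on I concentrate on the case $m=p^n$ with $p>3$.

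The first step is the reduction to Theorem~\ref{flat2}. I would set $X=A\times B$, whose Néron model is $\mathcal{X}=\mathcal{A}\times\mathcal{B}$. The natural decomposition
\[\bigwedge^{2}X[p^{n}]\;\simeq\;\bigwedge^{2}A[p^{n}]\;\oplus\;(A[p^{n}]\otimes B[p^{n}])\;\oplus\;\bigwedge^{2}B[p^{n}]\]
matches, under the decomposition of the Albanese kernel proved in \cite{Gaz12015}, a splitting of $T(X)$ in which $K(K;A,B)$ appears as the middle summand, and the restriction of the cycle map $c_{p^{n}}$ to this summand is precisely the Galois symbol $s_{p^{n}}$. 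Applying Theorem~\ref{flat2} to $X$ and projecting onto the $A\otimes B$ component then identifies the orthogonal complement of $\img(s_{p^{n}})$ inside $\Hom_{G_K}(A[p^{n}],B^{\star}[p^{n}])$ with the image of the cross piece of the composition
\[H^{2}(\mathcal{X}_{fl},\mu_{p^{n}})\longrightarrow H^{2}(X_{et},\mu_{p^{n}})\longrightarrow H^{0}(K,\Hom(\bigwedge^{2}X[p^{n}],\mu_{p^{n}})).\]
A Künneth argument on the flat site for the abelian scheme $\mathcal{X}=\mathcal{A}\times\mathcal{B}$ identifies this cross piece with the image of $H^{1}(\mathcal{A}_{fl},\mu_{p^{n}})\otimes H^{1}(\mathcal{B}_{fl},\mu_{p^{n}})$, so the theorem follows once this tensor product is matched with $\mathcal{H}om_{ffgps/\mathcal{O}_{K}}(\mathcal{A}[p^{n}],\mathcal{B}^{\star}[p^{n}])$.

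The hardest step is this last identification, and it is where the $p$-adic Hodge theory of Sections~4--6 is indispensable. I would pass from flat cohomology to syntomic cohomology via the Fontaine--Messing comparison, and then to Faltings' integral crystalline cohomology $H^{i}_{Br}(\mathcal{A}/S)$, which for $i<p-1$ is a strongly divisible Breuil module. Breuil's anti-equivalence between such modules of small weight and finite flat group schemes over $\mathcal{O}_K$, combined with the identification $H^{1}_{Br}(\mathcal{A}/S)\simeq M(\mathcal{A}[p^{\infty}])$ of Section~6 and the wedge isomorphism $\bigwedge^{2}H^{1}_{Br}(\mathcal{A}/S)\simeq H^{2}_{Br}(\mathcal{A}/S)$, converts the cross piece of $H^{2}_{Br}$ of $\mathcal{X}$ into the group $\mathcal{H}om_{ffgps/\mathcal{O}_{K}}(\mathcal{A}[p^{n}],\mathcal{B}^{\star}[p^{n}])$, once one uses the Weil pairing to identify $\mathcal{B}[p^{n}]$ with the Cartier dual of $\mathcal{B}^{\star}[p^{n}]$. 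The main obstacle, beyond the technical verifications inside Breuil's framework, is tracking the compatibility of three dualities at once: the Tate pairing on Galois cohomology, Cartier duality of finite flat group schemes, and the duality on the strongly divisible side coming from syntomic cohomology. The hypothesis $p>3$ is forced precisely because Breuil's equivalence must be applied simultaneously in degrees $0$, $1$ and $2$, and this requires $2<p-1$.
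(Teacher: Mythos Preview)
Your overall architecture---reduce to the single-variety alternating problem for $X=A\times B$, invoke Theorem~\ref{flat2}, then peel off the $A\otimes B$ component---is exactly what the paper does. The prime-to-$p$ discussion and the role of the hypothesis $p>3$ are also correct.

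The gap is the K\"unneth step. You assert that a K\"unneth decomposition on the flat site identifies the cross piece of $H^{2}(\mathcal{X}_{fl},\mu_{p^{n}})$ with the image of $H^{1}(\mathcal{A}_{fl},\mu_{p^{n}})\otimes H^{1}(\mathcal{B}_{fl},\mu_{p^{n}})$. No such K\"unneth formula is available here: $\mu_{p^{n}}$ is not an external tensor product on $\mathcal{A}\times\mathcal{B}$, and even granting some decomposition, the group $H^{1}(\mathcal{A}_{fl},\mu_{p^{n}})$ (which by Kummer sits between $\mathcal{O}_{K}^{\times}/p^{n}$ and $\Pic(\mathcal{A})[p^{n}]$) has no evident relation to $\mathcal{H}om_{ffgps/\mathcal{O}_{K}}(\mathcal{A}[p^{n}],\mathcal{B}^{\star}[p^{n}])$. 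Your third paragraph then shifts to ``the cross piece of $H^{2}_{Br}$ of $\mathcal{X}$,'' which is a different object and a different argument; the two paragraphs do not cohere.

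The paper avoids this entirely by never decomposing at the level of flat or syntomic cohomology. It runs the whole chain
\[
H^{2}_{syn}(\mathcal{X},\mu_{p^{n}})\longrightarrow \Hom_{\Mod^{\phi}_{[0,2]}}(\mathbf{1}^{n}_{1}[-1],H^{2}_{Br}(\mathcal{X}_{n}/S_{n}))\stackrel{\ref{H2}}{\simeq}\Hom_{\Mod^{\phi}_{[0,2]}}(\mathbf{1}^{n}_{1}[-1],\textstyle\bigwedge^{2}H^{1}_{Br}(\mathcal{X}_{n}/S_{n}))\simeq Sym\mathcal{H}om_{ffgps/\mathcal{O}_{K}}(\mathcal{X}[p^{n}],\mathcal{X}^{\star}[p^{n}])
\]
for the single abelian scheme $\mathcal{X}$, obtaining (Step~1) that the orthogonal complement of $\img(c_{p^{n}})$ under $(\star\star)$ is $Sym\mathcal{H}om_{ffgps/\mathcal{O}_{K}}(\mathcal{X}[p^{n}],\mathcal{X}^{\star}[p^{n}])$. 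Only then (Step~2) does it decompose, and at that stage the splitting is trivial: $\mathcal{X}[p^{n}]=\mathcal{A}[p^{n}]\times\mathcal{B}[p^{n}]$ as finite flat group schemes, so $Sym\mathcal{H}om$ breaks into the three expected summands, matching the decompositions of $S_{2}(K;A\times B)$ and of $\bigwedge^{2}(A\times B)[p^{n}]$. In short, push the product decomposition all the way to the finite-flat-group-scheme side rather than attempting it on flat cohomology.
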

The very first step is to prove that the two subgroups in question annihilate each other.
\begin{lem}\label{group} Let $\mathfrak{X}$ be a smooth scheme over $\mathcal{O}_{K}$. Let $G$ be an \'{e}tale commutative group scheme. The map $H^{i}(\mathfrak{X}_{et},G)\stackrel{\rho}{\longrightarrow}
H^{i}(\mathfrak{X}_{fl},G)$ induced by the inclusion of categories $\mathfrak{X}_{et}\subset \mathfrak{X}_{fl}$ is an isomorphism.
\end{lem}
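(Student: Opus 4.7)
The plan is to realise $\rho$ as the edge homomorphism of a Leray spectral sequence. The inclusion of sites $\mathfrak{X}_{et}\subset\mathfrak{X}_{fl}$ is a continuous functor and so gives rise to a morphism of topoi $\epsilon\colon\widetilde{\mathfrak{X}_{fl}}\rightarrow\widetilde{\mathfrak{X}_{et}}$. For any abelian sheaf $G$ on $\mathfrak{X}_{fl}$ there is then a spectral sequence
\[E_2^{p,q}=H^p(\mathfrak{X}_{et},R^q\epsilon_\ast G)\Longrightarrow H^{p+q}(\mathfrak{X}_{fl},G),\]
whose edge homomorphism $H^i(\mathfrak{X}_{et},\epsilon_\ast G)\rightarrow H^i(\mathfrak{X}_{fl},G)$ is precisely $\rho$. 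Since $G$ is representable by an \'{e}tale group scheme, the pushforward $\epsilon_\ast G$ is canonically identified with $G$ itself viewed as an \'{e}tale sheaf. Thus the lemma is equivalent to the vanishing $R^q\epsilon_\ast G=0$ for every $q\geq 1$.

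Vanishing of the higher direct images is a local assertion on $\mathfrak{X}_{et}$. For a geometric point $\bar x\to\mathfrak{X}$, writing $\mathcal{O}^{sh}_{\mathfrak{X},\bar x}$ for the strict henselization of the local ring of $\mathfrak{X}$ at $\bar x$, one has
\[(R^q\epsilon_\ast G)_{\bar x}=H^q\bigl(\Spec(\mathcal{O}^{sh}_{\mathfrak{X},\bar x})_{fl},\,G\bigr).\]
The problem therefore reduces to the following: for a strictly henselian local ring $R$ and a commutative \'{e}tale group scheme $G$ over $R$, one has $H^q(\Spec(R)_{fl},G)=0$ for all $q\geq 1$. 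The corresponding \'{e}tale cohomology is trivial by the very definition of strict henselization, so what remains is to compare the two cohomology theories in this local situation.

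Here I would invoke the classical theorem of Grothendieck (SGA 4 Exp.~VIII; see also Milne, \emph{\'{E}tale Cohomology} III.3.9) that for any smooth commutative group scheme the natural map $H^\ast_{et}\rightarrow H^\ast_{fl}$ is an isomorphism; an \'{e}tale morphism being smooth of relative dimension zero, this covers our $G$. The heart of that comparison is a refinement lemma: any flat cover of a strictly henselian base can, after an \'{e}tale base change, be split by a section, so that the \v{C}ech nerves computing flat and \'{e}tale cohomology agree in the limit. This refinement is the only genuine content of the argument; once it is granted, the assertion is completed by routine manipulation of the spectral sequence above.
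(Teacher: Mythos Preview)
Your argument is correct and ultimately rests on the same input as the paper: the paper simply cites Grothendieck's comparison theorem (Exp.\ III, Th\'eor\`eme II.7 of SGA~4), while you first set up the Leray spectral sequence for $\epsilon$ and reduce to the strictly henselian local case before invoking that same theorem (via Milne III.3.9). The extra scaffolding is exactly the skeleton of Grothendieck's own proof, so nothing is gained or lost relative to the paper's one-line citation; if you wanted a genuinely self-contained argument you would need to actually prove the refinement lemma you describe rather than defer to the reference, but as written both approaches amount to the same citation.
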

\begin{proof} This is a known result due to Grothendieck. (Theorem II.7 of Exp. III of \cite{Groth}). 

\end{proof}
\begin{prop}\label{annih} Under the Tate duality pairing $(\star)$, the groups $H^{2}_{s}(K,A[m]\otimes B[m])$ and  $\mathcal{H}om_{ffgps/\mathcal{O}_{K}}(\mathcal{A}[m],\mathcal{B}^{\star}[m])$ pair to zero.
\end{prop}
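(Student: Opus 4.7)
The strategy is to lift the entire Tate pairing computation from the Galois cohomology of $K$ up to the flat cohomology of $\Spec(\mathcal{O}_K)$, and then to invoke the vanishing $H^2_{fl}(\Spec(\mathcal{O}_K),\mu_m)=0$. This vanishing follows from the Kummer long exact sequence together with the facts that $\Pic(\mathcal{O}_K)=0$ (as $\mathcal{O}_K$ is a DVR) and that $\mathrm{Br}(\mathcal{O}_K)=0$, the latter because for a henselian local ring with finite residue field the Brauer group reduces to that of the residue field, which is trivial by Wedderburn.

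Fix a generator $\{a,b\}_{L/K}$ with $a\in A(L)$, $b\in B(L)$, and a homomorphism $\tilde{f}\in \mathcal{H}om_{ffgps/\mathcal{O}_K}(\mathcal{A}[m],\mathcal{B}^\star[m])$. Since $A$ and $B$ have good reduction, $\mathcal{A}$ and $\mathcal{B}$ are abelian schemes over $\mathcal{O}_K$; the N\'eron mapping property (equivalently, the valuative criterion applied to the proper scheme $\mathcal{A}$) gives $A(L)=\mathcal{A}(\mathcal{O}_L)$ and $B(L)=\mathcal{B}(\mathcal{O}_L)$, so $a,b$ extend to integral points $\tilde{a}, \tilde{b}$. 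Multiplication by $m$ on each abelian scheme is finite flat and surjective, so the integral Kummer sequences
\[
0\to\mathcal{A}[m]\to\mathcal{A}\xrightarrow{m}\mathcal{A}\to 0,\qquad 0\to\mathcal{B}[m]\to\mathcal{B}\xrightarrow{m}\mathcal{B}\to 0
\]
are exact in the flat topology on $\Spec(\mathcal{O}_L)$, producing integral connecting maps $\tilde{\delta}_1:\mathcal{A}(\mathcal{O}_L)\to H^1_{fl}(\Spec(\mathcal{O}_L),\mathcal{A}[m])$ and likewise $\tilde{\delta}_2$ for $\mathcal{B}$. Taking flat cup product followed by corestriction along the finite flat morphism $\Spec(\mathcal{O}_L)\to\Spec(\mathcal{O}_K)$ yields a lift
\[
\tilde{\alpha}:=\Cor_{\mathcal{O}_L/\mathcal{O}_K}\bigl(\tilde{\delta}_1(\tilde{a})\cup\tilde{\delta}_2(\tilde{b})\bigr)\in H^2_{fl}(\Spec(\mathcal{O}_K),\mathcal{A}[m]\otimes\mathcal{B}[m])
\]
whose generic-fiber restriction equals $s_m(\{a,b\}_{L/K})$, by naturality of Kummer connecting maps, cup product, and corestriction under $\Spec(K)\hookrightarrow\Spec(\mathcal{O}_K)$.

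Next, push $\tilde{\alpha}$ forward through $\tilde{f}\otimes\mathrm{id}:\mathcal{A}[m]\otimes\mathcal{B}[m]\to\mathcal{B}^\star[m]\otimes\mathcal{B}[m]$ and then through the integral Weil pairing $\mathcal{B}^\star[m]\otimes\mathcal{B}[m]\to\mu_m$ of finite flat group schemes over $\mathcal{O}_K$; this integral pairing exists because for the abelian scheme $\mathcal{B}$ the Cartier dual of the finite flat group scheme $\mathcal{B}[m]$ is canonically $\mathcal{B}^\star[m]$, extending the generic-fiber Weil pairing. The resulting class lies in $H^2_{fl}(\Spec(\mathcal{O}_K),\mu_m)=0$ and its generic-fiber restriction is $\langle s_m(\{a,b\}_{L/K}),\tilde{f}\rangle\in H^2(K,\mu_m)$. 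Hence the Tate pairing vanishes on every generator.

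The main technical task is to verify the compatibility of the flat-cohomological operations on $\Spec(\mathcal{O}_K)$ (Kummer connecting maps, cup product, corestriction, push-forward by $\tilde{f}$, and Weil pairing) with their Galois-cohomological counterparts on $\Spec(K)$ under the generic-fiber pullback, so that they assemble into a single commutative diagram terminating in the vanishing group $H^2_{fl}(\Spec(\mathcal{O}_K),\mu_m)$. These compatibilities are standard but must all be invoked. The good-reduction hypothesis enters crucially in two places: it makes $\mathcal{A},\mathcal{B}$ proper (granting the N\'eron mapping property used to lift $a,b$ integrally), and it is precisely what allows the Weil pairing to extend integrally via Cartier self-duality of the $m$-torsion of an abelian scheme.
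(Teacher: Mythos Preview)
Your proof is correct and follows essentially the same approach as the paper: lift the Kummer classes, cup products, and corestriction to flat cohomology over $\Spec(\mathcal{O}_K)$, push forward along the integral homomorphism and the integral Weil pairing, and conclude from $H^2_{fl}(\Spec(\mathcal{O}_K),\mu_m)\simeq\mathrm{Br}(\mathcal{O}_K)[m]=0$. You in fact supply more detail than the paper in two places---the justification of the vanishing of $H^2_{fl}(\Spec(\mathcal{O}_K),\mu_m)$ and the explicit invocation of Cartier duality $\mathcal{B}^\star[m]\simeq(\mathcal{B}[m])^\vee$ for the integral Weil pairing---both of which the paper leaves implicit.
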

\begin{proof} Let $L/K$ be a finite extension of $K$. By the N\'{e}ron model property, we have an isomorphism $A(L)\simeq\mathcal{A}(\mathcal{O}_{L})$. Moreover, we have commutative diagrams,
\[ \xymatrix{
&  \mathcal{A}(\mathcal{O}_{L})\ar[r]\ar[d]^{\simeq} & H^{1}_{fl}(\Spec(\mathcal{O}_{L}),\mathcal{A}[m])\ar[d]\ar[r]^{\Cor_{L/K}} & H^{1}_{fl}(\Spec(\mathcal{O}_{K}),\mathcal{A}[m])\ar[d]\\
&A(L)\ar[r]^{\delta_{1}} & H^{1}(L,A[m])\ar[r]^{\Cor_{L/K}} & H^{1}(K,A[m]),
}
\] and similarly for $B$. Here we denote by $H^{1}_{fl}(\Spec(\mathcal{O}_{K}),-)$ the flat cohomology on $\Spec(\mathcal{O}_{K})$. (By flat we mean we are working over the fppf site). Notice that the two right vertical arrows are defined using the previous lemma, since we have isomorphisms  $H^1(K,A[m])\stackrel{\simeq}{\longrightarrow}H^1_{fl}(K,A[m])$ and same for $B$. Moreover, the map $\mathcal{A}(\mathcal{O}_{L})\rightarrow H^{1}_{fl}(\Spec(\mathcal{O}_{L}),\mathcal{A}[m])$ is induced by the Kummer sequence on $\mathcal{A}$, which is exact on $\Spec(\mathcal{O}_{K})_{fl}$.  Using the fact that the cup product commutes with the Corestriction map, we conclude that  $H^{2}_{s}(K,A[m]\otimes B[m])$ is contained in the image of
$H^{2}_{fl}(\Spec(\mathcal{O}_{K}),\mathcal{A}[m]\otimes\mathcal{B}[m])\rightarrow H^{2}(K,A[m]\otimes B[m])$.

Let $\{a,b\}_{L/K}\in K(K;A,B)/m$ and
$h:\mathcal{A}[m]\rightarrow\mathcal{B}^{\star}[m]$ be a morphism of finite flat group schemes over $\mathcal{O}_{K}$. Using the description of $(\star)$ described in the beginning of the section, we conclude that  $<s_{m}(\{a,b\}_{L/K}),h>\in\img[H^{2}_{fl}(\Spec(\mathcal{O}_{K}),\mu_{m})\rightarrow H^{2}(K,\mu_{m})]$. The proposition then follows by the fact that $H^{2}_{fl}(\Spec(\mathcal{O}_{K}),\mu_{m})\simeq Br(\mathcal{O}_{K})[m]=0$.

\end{proof}
\subsection{The prime-to-$p$ Case} W. Raskind and M. Spiess in \cite{Ras/Sp} (theorem 3.5) prove that when $A,B$ have good reduction, the Somekawa $K$-group $K(K;A,B)$ is $m$-divisible, for $m$ coprime to $p$. This in particular yields an equality $H^{2}_{s}(K,A[m]\otimes B[m])=0$. The next proposition shows that their result agrees with our theorem \ref{BIG2}.
\begin{prop} Let $m\geq 1$ be a positive integer with $m$ coprime to $p$. Then theorem (\ref{BIG2}) holds. In particular, we have an  equality $\mathcal{H}om_{ffgps/\mathcal{O}_{K}}(\mathcal{A}[m],\mathcal{B}^{\star}[m])=\Hom_{G_{K}}(A[m],
B^{\star}[m])$.
\end{prop}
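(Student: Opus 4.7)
The plan is to exploit the Raskind--Spiess vanishing and reduce the statement to the fact that finite flat group schemes over $\mathcal{O}_K$ of order prime to $p$ are exactly the unramified finite $G_K$-modules.

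First, invoking \cite{Ras/Sp} (theorem 3.5), the group $K(K;A,B)$ is $m$-divisible, so $H^{2}_s(K, A[m]\otimes B[m]) = 0$ and its orthogonal complement under the perfect pairing $(\star)$ is the entire group $\Hom_{G_K}(A[m], B^\star[m])$. Hence \autoref{BIG2} reduces to the assertion
\[
\mathcal{H}om_{ffgps/\mathcal{O}_K}(\mathcal{A}[m], \mathcal{B}^\star[m]) \;=\; \Hom_{G_K}(A[m], B^\star[m]).
\]
By \autoref{annih} the left-hand side is always contained in the right-hand side (via the generic fiber functor), so the task is the reverse inclusion.

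Next I would unpack the structure of the finite flat group schemes in question. Because $A$ and $B$ have good reduction, the N\'eron models $\mathcal{A}$ and $\mathcal{B}^\star$ are abelian schemes over $\mathcal{O}_K$, and the $m$-torsion subschemes $\mathcal{A}[m]$, $\mathcal{B}^\star[m]$ are finite flat group schemes over $\mathcal{O}_K$. Since $m$ is invertible in $\mathcal{O}_K$, multiplication by $m$ on $\mathcal{A}$ (resp.\ on $\mathcal{B}^\star$) is an \'etale isogeny, hence both $\mathcal{A}[m]$ and $\mathcal{B}^\star[m]$ are in fact \'etale finite group schemes over $\mathcal{O}_K$. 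Consequently, $A[m]$ and $B^\star[m]$ are unramified $G_K$-modules.

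Finally I would apply the standard equivalence of categories between finite \'etale commutative group schemes over $\Spec(\mathcal{O}_K)$ and finite continuous modules over $\pi_1^{et}(\Spec \mathcal{O}_K) = \Gal(K^{unr}/K)$, sending $\mathcal{G}$ to $\mathcal{G}(\overline{K})$ with its action of the inertia-trivial quotient of $G_K$. Under this equivalence morphisms of finite flat (equivalently, finite \'etale) group schemes over $\mathcal{O}_K$ correspond exactly to $\Gal(K^{unr}/K)$-equivariant morphisms of the geometric points; since $A[m]$ and $B^\star[m]$ are already unramified, any $G_K$-homomorphism $A[m] \to B^\star[m]$ automatically factors through this quotient. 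This gives
\[
\mathcal{H}om_{ffgps/\mathcal{O}_K}(\mathcal{A}[m], \mathcal{B}^\star[m]) \;=\; \Hom_{\Gal(K^{unr}/K)}(A[m], B^\star[m]) \;=\; \Hom_{G_K}(A[m], B^\star[m]),
\]
completing the proof.

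There is no serious obstacle here; the main point to be careful about is the passage from finite flat to finite \'etale group schemes, which uses the coprimality of $m$ with $p$ together with good reduction in an essential way, and the identification of the \'etale fundamental group of $\Spec(\mathcal{O}_K)$ with the Galois group of the maximal unramified extension.
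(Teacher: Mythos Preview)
Your proof is correct and follows essentially the same approach as the paper's. Both arguments reduce the statement to the equality of Hom groups via the equivalence between finite \'etale group schemes over $\mathcal{O}_K$ and unramified finite $G_K$-modules; the paper phrases this through $G_k$ and the special fiber (citing Tate and Serre--Tate), while you phrase it through $\pi_1^{et}(\Spec \mathcal{O}_K)=\Gal(K^{unr}/K)$, but these are canonically identified since $\mathcal{O}_K$ is Henselian.
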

\begin{proof}
Since both $A,B^{\star}$ have good reduction, the schemes $\mathcal{A}[m]$ and $\mathcal{B}^{\star}[m]$ are finite flat. Since $m$ is coprime to $p$ they are in fact  \'{e}tale. Let $G_{k}=\Gal(\overline{k}/k)$. The functor $\mathcal{G}\to(\mathcal{G}\times_{\mathcal{O}_K}k)(\overline{k})$ from finite \'{e}tale group schemes over $\mathcal{O}_K$ to finite continuous $G_k$-modules is faithfully flat (\cite{Tate1}, section 1.4). 
We denote by $\overline{A}:=\mathcal{A}\times_{\mathcal{O}_{K}}k$ (resp. $\overline{B^{\star}}$) the reduction of $A$ (resp. of $B^{\star}$) over the residue field. The above fact yields an isomorphism, \[\mathcal{H}om_{ffgps/\mathcal{O}_{K}}(\mathcal{A}[m],\mathcal{B}^{\star}[m])\simeq\Hom_{G_{k}}(\overline{A}[m],
\overline{B}^{\star}[m]).\]
Next, we get an isomorphism \[\Hom_{G_{k}}(\overline{A}[m],
\overline{B}^{\star}[m])\simeq\Hom_{G_{K}}(A[m],
B^{\star}[m]),\] because both $A[m],B^\star[m]$ are unramified $G_K$-modules. (\cite{Serre/Tate1968}, theorem 1). The last isomorphism completes the proof of the proposition. 

\end{proof}

\section{Zero cycles and the result of Saito-Sato} The remaining sections of this paper concentrate in proving the theorem for $m=p^{n}$, where $n$ is a positive integer. In this section we review the relation between the Somekawa $K$-group $K(K;A,B)$ and various groups of zero cycles. Using the main theorem of \cite{SS-14}, we make a first important reduction to the problem.
\subsection{Relation to zero cycles} For a smooth projective variety $X$ over $K$ we consider the group $CH_{0}(X)$ of zero cycles modulo rational equivalence on $X$. This group has a filtration \[CH_{0}(X)\supset A_{0}(X)\supset T(X),\] where the first subgroup $A_{0}(X)$ is the kernel of the degree map, $\deg:CH_{0}(X)\rightarrow\Z$, and the second subgroup $T(X)$ is the kernel of the Albanese map, $\alb_{X}:A_{0}(X)\rightarrow Alb_{X}(K)$. For more details on these definitions and a survey on problems related to zero cycles we refer to \cite{Colliot-Thelenesurvey2010}. 

There are interesting connections between the Albanese kernel of certain varieties and the Somekawa $K$-group, as the following results indicate.
\subsection*{Product of Curves} Let $C_{1},C_{2}$ be smooth complete curves over $K$ with Jacobian varieties $J_{1},J_{2}$. Raskind and Spiess in \cite{Ras/Sp} proved an isomorphism $T(C_{1}\times C_{2})\simeq K(K;J_{1},J_{2})$. In particular, if $E_{1},E_{2}$ are elliptic curves over $K$, then $T(E_{1}\times E_{2})\simeq K(K;E_{1},E_{2})$ and additionally the Galois symbol $s_{p^{n}}$ coincides with the cycle map to \'{e}tale cohomology \[c_{p^{n}}:CH_{0}(E_{1}\times E_{2})/p^{n}\rightarrow H^{4}(E_{1}\times E_{2},\mu_{p^{n}}^{\otimes 2})\] when the latter is restricted to the Albanese kernel.
\subsection*{Abelian Varieties} From now on we will be working with an abelian variety $A$ over $K$ of dimension $d$. Moreover, we assume that the prime $p$ is odd. For such a variety, the current author in \cite{Gaz12015} defined a descending filtration $\{F^{r}\}_{r\geq 0}$ of $CH_{0}(A)$ with the property \[F^{r}/F^{r+1}\otimes\Z[\frac{1}{r!}]\simeq S_{r}(K;A)\otimes\Z[\frac{1}{r!}],\] for every $r\geq 0$. (\cite{Gaz12015} th. 3.8). The group $S_{r}(K;A)$ is the quotient of the $K$-group $K(K;A,\cdots,A)$ attached to $r$ copies of $A$ by the action of the symmetric group in $r$ variables.

The case $r=2$ is of particular importance for the proof of theorem (\ref{BIG2}). The group $F^{2}$ coincides with the Albanese kernel $T(A)$ of $A$ and we have an injection $\Phi:F^{2}/F^{3}\hookrightarrow S_{2}(K;A)$ with the image of $\Phi$ containing $2S_{2}(K;A)$. (\cite{Gaz12015} prop. 3.4).
\subsection{The Hochschild-Serre spectral sequence} Let $r\geq 0$. By \cite{Mur/Ram}, lemma 2.1.1, the Hochschild-Serre spectral sequence,  \[E_{2}^{i,j}:=H^{i}(K,H^{j}(A_{\overline{K},}\Z/p^{n}(r))\Rightarrow H^{i+j}(A,\Z/p^{n}(r))\] degenerates at page 2, where $\Z/p^{n}(r):=\mu_{p^{n}}^{\otimes r}$ is the Tate twist. The proof is an easy weight argument, which holds also for $\Z/m$ coefficients. We are interested in the following two cases. \\
\underline{Filtration on $H^{2}(A,\mu_{p^{n}})$:} The spectral sequence gives a filtration $H^{2}(A,\mu_{p^{n}})=G^{0}\supset G^{1}\supset G^{2}\supset 0$ with the following graded quotients,
\begin{itemize}\item
$G^{0}/G^{1}\simeq E_{2}^{0,2}=H^{0}(K,H^{2}(A_{\overline{K}},\mu_{p^{n}}))\simeq
\Hom_{G_{K}}(\bigwedge^{2}A[p^{n}],\mu_{p^{n}})$,
\item $G^{1}/G^{2}\simeq E_{2}^{1,1}=H^{1}(K,H^{1}(A_{\overline{K}},\mu_{p^{n}}))
    =H^{1}(K,\Hom(A[p^{n}],\mu_{p^{n}}))$,
\item $G^{2}\simeq E_{2}^{2,0}=H^{2}(K,H^{0}(A_{\overline{K}},\mu_{p^{n}}))=\Z/p^{n}$.
\end{itemize}
\underline{Filtration on $H^{2d}(A,\Z/p^{n}(d))$:}
The Hochschild-Serre filtration $H^{2d}(A,\Z/p^{n}(d))=I^{0}\supset I^{1}\supset I^{2}\supset 0$ has again only three pieces, because the base field $K$ has cohomological dimension 2. The graded quotients are as follows,
\begin{itemize}\item
$I^{0}/I^{1}\simeq E_{2}^{0,2d}=H^{0}(K,H^{2d}(A_{\overline{K}},\Z/p^{n}(d)))$,
\item $I^{1}/I^{2}\simeq E_{2}^{1,2d-1}=H^{1}(K,H^{2d-1}(A_{\overline{K}},\Z/p^{n}(d)))$,
\item $I^{2}\simeq E_{2}^{2,2d-2}=H^{2}(K,H^{2d-2}(A_{\overline{K}},\Z/p^{n}(d)))$.
\end{itemize} Using Poincar\'{e} duality and the computation of $H^{i}(A_{\overline{K}},\Z/p^{n})$ (see for example \cite{Mil2}) we get a more concrete description of the graded pieces $gr^{i}(I^{\bullet})$, namely,
\begin{itemize}\item
$I^{0}/I^{1}=
H^{0}(K,\Z/p^{n})=\Z/p^{n}$,
\item $I^{1}/I^{2} =H^{1}(K,H^{2d-1}(A_{\overline{K}},\Z/p^{n}(d)))\simeq
    H^{1}(K,A[p^{n}])$,
\item $I^{2}
    =H^{2}(K,H^{2d-2}(A_{\overline{K}},\Z/p^{n}(d)))\simeq
    H^{2}(K,\bigwedge^{2}A[p^{n}])$.
\end{itemize}
\subsection{The cycle map as a Galois symbol}
Let $m$ be a positive integer. It is known that the cycle map $c_{m}:CH_{0}(A)/m\rightarrow H^{2d}(A,\Z/m(d))$ when restricted to the Albanese kernel, $T(A)$, maps to the second piece of the Hochschild-Serre filtration, namely
$c_{m}:T(A)\rightarrow H^{2}(K,\bigwedge^{2}A[m])$.

In \cite{Gaz12015}, proposition 5.2, we showed that the Galois symbol induces a map as follows, \[S_{2}(K;A)/m\rightarrow H^{2}(K,\bigwedge^{2}A[m]).\] Moreover, in section 6 of \cite{Gaz12015} we showed that the cycle map $c_{m}$ when restricted to $T(A)$ coincides with the composition
\[F^{2}/F^{3}\stackrel{\Phi}{\hookrightarrow} S_{2}(K;A)\rightarrow H^{2}(K,\bigwedge^{2}A[m]).\]
\begin{lem}\label{symbcyc} For $m=p^{n}$, with $p>2$, the image of the cycle map
$T(A)\stackrel{c_{p^{n}}}{\longrightarrow}H^{2}(K,\bigwedge^{2}A[p^{n}])$ and  that of the Galois symbol $S_{2}(K;A)\stackrel{s_{p^{n}}}{\longrightarrow} H^{2}(K,\bigwedge^{2}A[p^{n}])$ coincide.
\end{lem}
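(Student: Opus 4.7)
The plan is to exploit two pieces of information already recorded in the paper: the factorization of the cycle map through the Somekawa $K$-group and the $2$-torsion control on the cokernel of the comparison map $\Phi$. Combining these with the hypothesis $p>2$ will identify the two images immediately.

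First, I would recall the factorization established in Section~6 of \cite{Gaz12015}: the cycle map, restricted to $T(A)=F^2$, descends to $F^2/F^3$ and fits in a commutative diagram
\[
\xymatrix{
F^2/F^3 \ar@{^{(}->}[r]^{\Phi} \ar[dr]_{c_{p^n}} & S_2(K;A) \ar[d]^{s_{p^n}} \\
& H^2(K,\bigwedge^2 A[p^n]).
}
\]
In particular, $\img(c_{p^n}) = \img(s_{p^n}\circ\Phi) \subset \img(s_{p^n})$, so only the reverse inclusion needs argument.

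Next, I would invoke the structural statement from \cite{Gaz12015}, Prop.~3.4, that the image of $\Phi$ contains $2\,S_2(K;A)$, equivalently that the cokernel of $\Phi$ is annihilated by $2$. Reducing modulo $p^n$ and using that $p>2$ makes $2$ a unit in $\Z/p^n$, this yields that the induced map
\[
\overline{\Phi}\colon F^2/F^3 \otimes \Z/p^n \longrightarrow S_2(K;A)/p^n
\]
is surjective: any $\xi \in S_2(K;A)/p^n$ can be written as $2\cdot(2^{-1}\xi)$ with $2^{-1}\xi\in S_2(K;A)/p^n$, and since $2\cdot S_2(K;A)\subset \img(\Phi)$, a lift of $\xi$ lies in the image of $\overline{\Phi}$.

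Finally, composing with the Galois symbol $s_{p^n}$, which factors through $S_2(K;A)/p^n$, gives $\img(s_{p^n}\circ\Phi) = \img(s_{p^n})$, completing the equality $\img(c_{p^n}) = \img(s_{p^n})$. There is no real obstacle here: the entire content of the lemma is the passage from the factor-of-$2$ obstruction in $\Phi$ to an equality after reduction mod $p^n$, which is precisely what the hypothesis $p>2$ is designed to allow.
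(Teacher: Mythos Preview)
Your proof is correct and follows essentially the same approach as the paper: both use the factorization $c_{p^n}=s_{p^n}\circ\Phi$ together with the fact that $\img(\Phi)\supset 2S_2(K;A)$, and then observe that since the target is a $p$-group with $p>2$ (equivalently, $2$ is a unit in $\Z/p^n$), the factor of $2$ disappears and the two images agree. The paper phrases this by noting that $H^2(K,\bigwedge^2 A[p^n])$ has $p$-power order, while you phrase it via surjectivity of $\overline{\Phi}$ modulo $p^n$; these are the same observation.
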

\begin{proof}
This is clear, since the group $H^{2}(K,\bigwedge^{2}A[p^{n}])$ has order a power of $p$ and the map $\Phi$ surjects onto  $2S_{2}(K;A)$, therefore the image of the composition $s_{p^{n}}\circ\Phi$ coincides with the image of $s_{p^{n}}$.

\end{proof}
In the remaining of this section our goal is to describe the orthogonal complement of $s_{p^{n}}:S_{2}(K;A)/p^{n}\rightarrow H^{2}(K,\bigwedge^{2}A[p^{n}])$ under the Tate duality pairing,
\[(\star\star)\;H^{2}(K,\bigwedge^{2}A[p^{n}])\times H^{0}(K,\Hom(\bigwedge^{2}A[p^{n}],\mu_{p^{n}}))\rightarrow\Z/p^{n},\] which is defined similarly to the pairing $(\star)$. We first need some generalities about the Brauer-Manin pairing.

\subsection*{The Brauer-Manin pairing}
Let $X$ be a smooth projective variety over $K$ of dimension $d$. The Brauer-Manin pairing is a pairing,
\[<,>_{1}:CH_{0}(X)\times Br(X)\rightarrow\Q/\Z,\] between the zero cycles on $X$ and the Brauer group of $X$. The pairing is described by evaluation of Azumaya algebras at closed points.
Restricting to the $p^{n}$-torsion points of $Br(X)$, we obtain a pairing
$<,>_{1}:CH_{0}(X)/p^{n}\times Br(X)[p^{n}]\rightarrow\Z/p^{n}.$
Moreover, S. Saito in \cite{Saito} shows the existence of a perfect pairing
\[<,>_{2}:H^{2d}(X,\Z/p^{n})\times H^{2}(X,\mu_{p^{n}})\rightarrow\Z/p^{n}.\] The Kummer sequence, $0\rightarrow\mu_{p^{n}}\rightarrow\mathbb{G}_{m}\stackrel{p^{n}}{\rightarrow}\mathbb{G}_{m}\rightarrow 0$ is an exact sequence of \'{e}tale sheaves on $X$, giving  rise to a short exact sequence
\[0\rightarrow\Pic (X)/p^{n}\stackrel{\beta}{\longrightarrow} H^{2}(X,\mu_{p^{n}})\stackrel{\varepsilon}{\longrightarrow} Br(X)[p^{n}]\rightarrow 0.\] The cycle map $c_{p^{n}}$ and the above exact sequence give a compatibility between the two pairings as follows,
\[
	\begin{tikzcd} 
& Br(X)[p^{n}]\ar{r}{h_1} & \Hom(CH_{0}(X)/p^{n},\Z/p^{n})\\
& H^{2}(X,\mu_{p^{n}})\ar{u}{\varepsilon}\ar{r}{h_2} & \Hom(H^{2d}(X,\Z/p^{n}(d)),\Z/p^n)\ar{u}{c_{p^{n}}^{\star}},
\end{tikzcd}
\] where for $i=1,2$ we denote by $h_i$ the map defined as $h_i(x)=<-,x>_i$.  For a proof of the commutativity of the above diagram we refer to \cite{Gaz12015} (lemma 6.4).

The Brauer-Manin pairing is far from being perfect. In fact it has significant kernels on both sides. We are interested in the right kernel, which is described by the following theorem of S. Saito and K. Sato (we only include the good reduction case).
\begin{thm} (\cite{SS-14}) Let $X$ be a smooth projective variety over $K$ that has a smooth proper model $\mathfrak{X}$ over $\mathcal{O}_K$. The right kernel of the Brauer-Manin pairing is precisely $Br(\mathfrak{X})$.
\end{thm}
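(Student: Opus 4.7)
The plan is to establish the two inclusions separately. The containment $Br(\mathfrak{X}) \subseteq $ (right kernel) is essentially formal, while the reverse containment, that every $\alpha \in Br(X)$ pairing trivially with all of $CH_0(X)$ already comes from $Br(\mathfrak{X})$, carries the substantive content and will require input from arithmetic duality.

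For the easy direction, I would fix $\alpha \in Br(\mathfrak{X})$ and a closed point $x \in X$ with residue field $L$, a finite extension of $K$. Since $\mathfrak{X}$ is proper over $\mathcal{O}_K$, the valuative criterion upgrades the $L$-point $x : \Spec(L) \to X \hookrightarrow \mathfrak{X}$ to an integral point $\tilde{x} : \Spec(\mathcal{O}_L) \to \mathfrak{X}$. The pullback $\tilde{x}^*\alpha$ then lies in $Br(\mathcal{O}_L)$, which vanishes because $\mathcal{O}_L$ is henselian with finite residue field (one has $Br(\mathcal{O}_L) \hookrightarrow Br(k_L) = 0$ by the classical Brauer computation). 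Restricting further to the generic point gives $x^*\alpha = 0$ in $Br(L)$, so the local invariant is zero and the Brauer-Manin pairing vanishes on all zero cycles by linearity.

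For the reverse inclusion, the approach is to use the purity/localization sequence attached to the open immersion $j : X \hookrightarrow \mathfrak{X}$ with complement the special fiber $\mathfrak{X}_k$, which realizes the quotient $Br(X)/Br(\mathfrak{X})$ as a subgroup of a cohomology group supported on $\mathfrak{X}_k$. The goal is to show that the Brauer-Manin pairing descends to this quotient and induces a pairing with the degree-zero zero cycles which is nondegenerate on the left. For the prime-to-$p$ part, one can proceed by Poincar\'{e}-Lefschetz duality on $\mathfrak{X}_k$ combined with local duality over the finite field $k$. For the $p$-part, one has to identify $Br(X)[p^n]/Br(\mathfrak{X})[p^n]$, via $p$-adic vanishing cycles and logarithmic Hodge-Witt sheaves, with an object dual to zero cycles on $\mathfrak{X}_k$; the resulting pairing should match the residue-theoretic description of the Brauer-Manin pairing on integral models.

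The main obstacle is without doubt the $p$-primary part of the reverse inclusion. The $\ell$-adic case ($\ell \neq p$) is amenable to standard smooth and proper base change plus Poincar\'{e} duality, but at $p$ one needs precise control over $Rj_*\mu_{p^n}$ on $\mathfrak{X}$, a workable description of $p$-adic vanishing cycles in terms of logarithmic differentials \textit{\`{a} la} Bloch-Kato, and a nondegeneracy statement pairing these with a cycle-theoretic group on $\mathfrak{X}_k$. Navigating this comparison, and in particular showing that an $\alpha$ in the right kernel has vanishing image in the relevant graded piece of the coniveau filtration, is precisely the technical heart of Saito-Sato's argument and the reason their theorem is a deep input rather than a formal consequence.
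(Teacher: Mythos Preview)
The paper does not give a proof of this theorem; it is quoted from \cite{SS-14} as a black box and used as an input to Theorem~\ref{flat}. So there is no ``paper's own proof'' to compare against. Your proposal is therefore not in competition with anything in the paper, and should be read as an independent sketch of the Saito--Sato result.

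As a sketch, the easy direction is fine: the argument via properness and $Br(\mathcal{O}_L)=0$ is the standard one and is complete as written.

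For the reverse inclusion, what you have is an outline of ingredients rather than a proof. You correctly isolate the localization sequence, the identification of $Br(X)/Br(\mathfrak{X})$ with a group supported on the special fiber, and the need to produce a nondegenerate pairing against zero cycles there; and you are right that the $\ell\neq p$ part was already handled by Colliot-Th\'el\`ene and Saito in \cite{CTSS}. But you stop short of supplying any of the actual mechanisms: the precise identification of the relevant graded pieces via logarithmic Hodge--Witt sheaves, the construction of the pairing on the special fiber, and the nondegeneracy argument are all asserted rather than carried out. You acknowledge this yourself in the final paragraph, so there is no misrepresentation, but be aware that as it stands the reverse inclusion is not proved---it is a correct diagnosis of where the difficulty lies, together with a plausible list of tools, and nothing more. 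If the intent was only to record the statement and cite \cite{SS-14}, that matches what the paper does.
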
 We note that the coprime-to-$p$ version of the above theorem was previously known by Colliot-Th\'{e}l\`{e}ne and S. Saito (\cite{CTSS}).
 This result is the key to prove the theorem in the next subsection.
\subsection{The orthogonal complement of the cycle map}
From now on we focus on the case of an abelian variety $A$ over $K$ with good reduction. Let $\mathcal{A}$ be the N\'{e}ron model of $A$, which is an abelian scheme over $\Spec(\mathcal{O}_{K})$.

We next consider the map $H^{2}_{fl}(\mathcal{A},\mu_{p^{n}})\rightarrow H^{2}_{et}(A,\mu_{p^{n}})$ obtained by the  composition of the map $H^{2}_{fl}(\mathcal{A},\mu_{p^{n}})\rightarrow H^{2}_{fl}(A,\mu_{p^{n}})
$  and the inverse of the map $\rho$ of lemma \ref{group} applied for the group scheme $(\mu_{p^{n}})_{K}$.
\begin{thm}\label{flat} Let $A$ be an abelian variety over $K$ of dimension $d$ with good reduction. Let $\mathcal{A}$ be the N\'{e}ron model of $A$. Under the Tate duality pairing $(\star\star)$, the orthogonal complement of the image of the cycle map $T(A)\stackrel{c_{p^{n}}}{\rightarrow} H^{2}(K,\bigwedge^{2}A[p^{n}])$, or equivalently of the Galois symbol $s_{p^{n}}$, is the image of the composition
\[H^{2}_{fl}(\mathcal{A},\mu_{p^{n}})\rightarrow H^{2}_{et}(A,\mu_{p^{n}})\twoheadrightarrow H^{0}(K,\Hom(\bigwedge^{2}A[p^{n}],\mu_{p^{n}})).\]
\end{thm}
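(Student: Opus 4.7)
The plan is to reduce to the Saito-Sato theorem on the right kernel of the Brauer-Manin pairing by exploiting the structure of the Hochschild-Serre filtrations and Tate duality. The key structural input is that the Saito perfect pairing $\langle,\rangle_2$ respects the filtrations $I^\bullet$ on $H^{2d}$ and $G^\bullet$ on $H^2$ in the sense that $G^j = (I^{3-j})^\perp$, so that the three associated graded pairings $I^i/I^{i+1} \times G^{2-i}/G^{3-i} \to \Z/p^n$ realize the three local Tate duality pairings in Galois cohomology of $K$: for $i=0$ the perfect pairing $\Z/p^n \times \Z/p^n \to \Z/p^n$ given by the invariant map; for $i=1$ the finer $H^1$ Tate duality $H^1(K,A[p^n]) \times H^1(K,A^\star[p^n]) \to \Z/p^n$; and for $i=2$ precisely the pairing $(\star\star)$. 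Combined with Lemma \ref{symbcyc}, this reduces the problem to computing the orthogonal complement of the subgroup $c_{p^n}(T(A)) \subset I^2$ inside $G^0/G^1$.

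For the easy inclusion, let $\alpha \in H^2_{fl}(\mathcal{A},\mu_{p^n})$ with image $\alpha' \in H^2_{et}(A,\mu_{p^n})$ and $\beta$ the class of $\alpha'$ in $G^0/G^1$. The commutative diagram of Kummer short exact sequences on $\mathcal{A}$ and $A$ shows that $\varepsilon(\alpha') \in Br(A)[p^n]$ comes from $Br(\mathcal{A})[p^n]$; by Saito-Sato, $\langle x,\varepsilon(\alpha')\rangle_1 = 0$ for all $x \in CH_0(A)/p^n$, and the compatibility diagram between $\langle,\rangle_1$ and $\langle,\rangle_2$ translates this to $\langle c_{p^n}(x),\alpha'\rangle_2 = 0$. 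Since $c_{p^n}(T(A)) \subset I^2$, the pairing against $\alpha'$ factors through $G^0/G^1$, yielding $\langle c_{p^n}(x),\beta\rangle_{(\star\star)} = 0$ for $x \in T(A)$.

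For the converse direction, start with $\beta \in G^0/G^1$ orthogonal to $c_{p^n}(T(A))$, choose an arbitrary lift $\tilde\beta \in H^2_{et}(A,\mu_{p^n})$, and consider the error functional $\eta : CH_0(A)/p^n \to \Z/p^n$ defined by $\eta(x) = -\langle c_{p^n}(x),\tilde\beta\rangle_2$. The orthogonality hypothesis ensures $\eta$ vanishes on the image of $T(A)/p^n$ in $CH_0(A)/p^n$, hence $\eta$ descends to a functional $\bar\eta = (\eta_A, \eta_{\Z})$ on $A(K)/p^n \oplus \Z/p^n$, using that $CH_0(A)/T(A) \cong A(K) \oplus \Z$ via Albanese and degree. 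The plan is to modify $\tilde\beta$ by an element $g \in G^1$ realizing $\bar\eta$ through the pairing with $c_{p^n}$. For the $A(K)$ part, finer Tate duality for $H^1$ produces a class $\bar g_1 \in G^1/G^2 = H^1(K,A^\star[p^n])$ with $\langle \delta(a),\bar g_1\rangle_{\mathrm{Tate}} = \eta_A(a)$; for the degree part, choose $g_2 \in G^2 \cong \Z/p^n$ via the perfect pairing $I^0/I^1 \times G^2 \to \Z/p^n$. The two corrections decouple because $\langle I^1,G^2\rangle = 0$ (so $g_2$ does not disturb the $A(K)$ condition) and $\langle I^2,G^1\rangle = 0$ (so the full $g \in G^1$ leaves the $T(A)$ condition unchanged). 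With $g = g_1 + g_2$, the element $\tilde\beta + g$ is orthogonal to every class $c_{p^n}(x)$ with $x \in CH_0(A)/p^n$.

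To finish, by Saito-Sato $\varepsilon(\tilde\beta + g)$ lifts to $Br(\mathcal{A})[p^n]$. Lifting this preimage to $H^2_{fl}(\mathcal{A},\mu_{p^n})$ and comparing with $\tilde\beta + g$ in $H^2_{et}(A,\mu_{p^n})$, the difference lies in $\Pic(A)/p^n$, which in turn lifts to $\Pic(\mathcal{A})/p^n$ because the localization sequence for Picard groups applied to the regular scheme $\mathcal{A}$ (whose complement $A$ is obtained by removing the principal special fiber) yields surjectivity of $\Pic(\mathcal{A}) \to \Pic(A)$. Adding the resulting class in $H^2_{fl}(\mathcal{A},\mu_{p^n})$ produces a lift of $\tilde\beta + g$, whose image in $G^0/G^1$ is $\beta$. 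The main obstacle lies in the construction and decoupling of the $g$-correction in paragraph three, for which the crucial inputs are the compatibility of the cycle map with the Hochschild-Serre filtration, finer Tate duality for $H^1$, and the graded structure of $\langle,\rangle_2$; the rest of the argument is a diagram chase within the established Saito-Sato framework.
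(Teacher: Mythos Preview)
Your argument is correct and follows essentially the same route as the paper: both rest on the Saito--Sato theorem, the compatibility of $\langle,\rangle_1$ and $\langle,\rangle_2$, surjectivity of $\Pic(\mathcal{A})\to\Pic(A)$, and the fact that the Saito pairing is filtered by the Hochschild--Serre filtrations with graded pieces the Tate duality pairings. The only organizational difference is that the paper first computes the orthogonal complement of the \emph{full} cycle map inside $H^2_{et}(A,\mu_{p^n})$ and then passes to $G^0/G^1$ by a snake-lemma argument showing $\mathrm{coker}\bigl(G^1\to((CH_0(A)/T(A))/p^n)^\vee\bigr)=0$ (equivalently, that $c_{p^n}$ is injective on $(CH_0(A)/T(A))/p^n$ via the degree and Kummer maps); your explicit construction of $g=g_1+g_2$ is precisely an unwinding of that surjectivity statement.

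One small imprecision: your ``decoupling'' does not account for the cross-term $\langle c_{p^n}([e]),g_1\rangle_2$ for a degree-one cycle, since $c_{p^n}([e])\notin I^1$; this is harmless, as you may absorb it into the choice of $g_2$ (or into the lift of $\bar g_1$ to $G^1$). Note also that producing $\bar g_1$ requires only ordinary Tate duality together with injectivity of the Kummer map $A(K)/p^n\hookrightarrow H^1(K,A[p^n])$, not the finer $H^1$ statement.
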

\begin{proof} The first step is to consider the  Saito perfect pairing \[<,>_{2}:H^{2d}_{et}(A,\Z/p^{n}(d))\times H^{2}_{et}(A,\mu_{p^{n}}))\rightarrow\Z/p^{n},\] and prove that the orthogonal complement under $<,>_2$ of the image of the cycle map is  $\img(H^{2}_{fl}(\mathcal{A},\mu_{p^{n}})\rightarrow H^{2}_{et}(A,\mu_{p^{n}}))$.

Keeping the notation from our set up above, we need to show an equality \[\ker(c^{\star}_{p^{n}}\circ h_{2})=
\img(H^{2}_{fl}(\mathcal{A},\mu_{p^{n}})\rightarrow H^{2}_{et}(A,\mu_{p^{n}})).\] The compatibility between the two pairings yields an equality
$c^{\star}_{p^{n}}\circ h_{2}=h_{1}\circ\varepsilon$. Let $x$ be an element of $\ker(h_{1}\circ\varepsilon)
$. Then $\varepsilon(x)\in\ker(h_{1})$.
By the theorem of Saito and Sato (\cite{SS-14}), we conclude that $\varepsilon(x)$ belongs to $Br(\mathcal{A})[p^{n}]$. Considering the Kummer sequences on $\mathcal{A}_{fl}$ and $A_{et}$ respectively we get commutative diagrams
\[ \xymatrix{
& 0\ar[r] & \Pic(\mathcal{A})/p^{n}\ar[r]^{\beta}\ar[d]^{\iota_{1}} & H^{2}_{fl}(\mathcal{A},\mu_{p^{n}})\ar[r]^{\varepsilon}\ar[d]^{\iota_{2}}& Br(\mathcal{A})[p^{n}]\ar[r]\ar[d]^{\iota_{3}} & 0\\
& 0\ar[r] & \Pic(A)/p^{n}\ar[r]^{\beta} & H^{2}_{et}(A,\mu_{p^{n}})\ar[r]^{\varepsilon}& Br(A)[p^{n}]\ar[r] & 0.\\
}
\] By the Snake Lemma we obtain an exact sequence
\[\ker(\iota_{1})\stackrel{\beta}{\longrightarrow}\ker(\iota_{2})\stackrel{\varepsilon}{\longrightarrow}\ker(\iota_{3})\stackrel{\delta}{\longrightarrow}
\ck(\iota_{1})\stackrel{\beta}{\longrightarrow}\ck(\iota_{2})\stackrel{\varepsilon}{\longrightarrow}\ck(\iota_{3}).\]
Since $\varepsilon(x)\in\img(\iota_{3})$, we get that $x\in\img(\ck(\iota_{1})\rightarrow\ck(\iota_{2}))$. Since $A$ is the generic fiber of $\mathcal{A}$, we have a surjection $\Pic(\mathcal{A})\twoheadrightarrow\Pic(A)$. In particular $\iota_{1}$ is surjective and hence $x\in\img(\iota_{2})$, which is exactly what our first claim was.

The next step is to restrict to the Albanese kernel $T(A)$. We refine the Saito pairing using Tate duality. We consider the Hochschild-Serre filtration $G^{0}\supset G^{1}\supset G^{2}\supset 0$ and $I^{0}\supset I^{1}\supset I^{2}\supset 0$ of the groups $H^{2}_{et}(A,\mu_{p^{n}})$ and
$H^{2d}_{et}(A,\Z/p^{n}(d))$ respectively. In subsection 3.2 we computed the graded pieces of these.  We notice that for integers  $i,j\in\{0,2\}$ with $i+j=2$ the Saito pairing $<,>_{2}$ gives a pairing  \[<,>_{2}:gr^{i}(I^{\bullet})\times gr^{j}(G^{\bullet})\rightarrow\Z/p^{n},\] which is in fact a local Tate duality pairing. The case of interest for us is when $i=2$ and $j=0$, when the pairing coincides with $(\star\star)$.
We have commutative diagrams
\[ \begin{tikzcd}
&0 \ar{r}& G^{1}\ar{r}\ar{d}{h_2\simeq} & H^{2}(A,\mu_{p^{n}})\ar{r}\ar{d}{h_2\simeq} & G^{0}/G^{1}\ar{r}\ar{d}{h_2\simeq} & 0\\
&0 \ar{r}& (I^{0}/I^{2})^{\vee}\ar{r}\ar{d}{c_{p^{n}}
^{\star}} &
(H^{2d}(A,\Z/p^{n}(d)))^{\vee}\ar{r}\ar{d}{c_{p^{n}}^{\star}} & (I^{2})^{\vee}\ar{r}\ar{d}{c_{p^{n}}^{\star}} & 0\\
& & (CH_{0}(A)/T(A))^{\vee}\ar{r} & (CH_{0}(A)/p^{n})^{\vee}\ar{r}
& (F^{2}/p^{n})^{\vee} \ar{r} & 0.
\end{tikzcd}
\] We set $f_{i}:=c_{p^{n}}^{\star}\circ h_{2}$ (the map corresponding to the $i$ vertical arrow for $i=1,2,3$). We want to describe the kernel of $f_{3}$. A weaker form of the snake lemma yields an exact sequence
\[\ker (f_{2})\longrightarrow\ker(f_{3})\stackrel{\delta}{\longrightarrow}\ck(f_{1}).\]
By the previous step we get that $\ker (f_{2})=\img(H^{2}_{fl}(\mathcal{A},\mu_{p^{n}})\rightarrow H^{2}_{et}(A,\mu_{p^{n}}))$. The statement of the theorem will thus follow, if we show that $\ck(f_{1})=0$. It suffices to show that $c_{p^{n}}^{\star}:(H^{2d}_{et}(A,\Z/p^{n}(d))/I^{2})^{\vee}\rightarrow
((CH_{0}(A)/T(A))/p^{n})^{\vee} $ is surjective, or equivalently that
\[c_{p^{n}}:(CH_{0}(A)/T(A))/p^{n}\rightarrow H^{2d}_{et}(A,\Z/p^{n}(d))/I^{2}\]
is injective. This follows from the following two facts. First, we have an isomorphism induced by the degree map
\[\deg:(CH_{0}(A)/A_{0}(A))/p^{n}\rightarrow\Z/p^{n}\simeq H^{2d}_{et}(A,\Z/p^{n}(d))/I^{1}.\] Second, we have an inclusion induced by the Kummer sequence $0\longrightarrow A[p^{n}]\longrightarrow A\stackrel{p^{n}}{\longrightarrow} A\longrightarrow 0$ on $A$ which gives
$(A_{0}(A)/T(A))/p^{n}\simeq A(K)/p^{n}\hookrightarrow H^{1}(K,A[p^{n}])\simeq I^{1}/I^{2}.$

\end{proof}
\vspace{2pt}
\section{Syntomic cohomology} In this section we want to replace the flat cohomology group $H^{2}_{fl}(\mathcal{A},\mu_{p^{n}})$ of theorem \ref{flat} with syntomic cohomology. Let
$W=W(k)$ be the ring of Witt vectors over the residue field $k$ and $K_{0}=\fc(W(k))$ its field of fractions. We have a tower $\Q_{p}\subset K_{0}\subset K$ of finite extensions with $K_{0}/\Q_{p}$ unramified and $K/K_{0}$ totally ramified. Let $e=[K:K_{0}]$ be the absolute  ramification index.

\subsection{The syntomic site} For a smooth scheme $\mathfrak{X}$ over $\Spec(\mathcal{O}_{K})$ we denote by $\mathfrak{X}_{syn}$ the small syntomic site  of $\mathfrak{X}$. This is a Grothendieck site with coverings surjective families of syntomic morphisms. We review the definition of a syntomic morphism $Y\rightarrow\mathfrak{X}$.
\begin{defn} A morphism $f:Y\rightarrow\mathfrak{X}$ over $\Spec(\mathcal{O}_{K})$  is called syntomic, if it is flat, locally of finite presentation and \'{e}tale locally on $Y$ there is a factorization $Y\stackrel{i}{\hookrightarrow} Z\stackrel{h}{\rightarrow}\mathfrak{X}$ with $h$ a smooth morphism and $i$ an exact closed immersion that is transversally regular over $\mathfrak{X}$. Alternatively, $f$ is flat and locally a regular complete intersection.
\end{defn} In particular every \'{e}tale morphism is syntomic and every syntomic morphism is flat, hence we have  maps of sites $\mathfrak{X}_{et}\rightarrow\mathfrak{X}_{syn}\rightarrow\mathfrak{X}_{fl}$.
\begin{exmp} For  a scheme $\mathfrak{X}$ over $\mathcal{O}_{K}$, the Kummer sequence $0\rightarrow\mu_{p^n}\rightarrow\G_m\rightarrow\G_m\rightarrow 0$ is not exact on $\mathfrak{X}_{et}$, but it is exact on $\mathfrak{X}_{syn}$.
\end{exmp}

\begin{lem}\label{synflat} The map $j:H^{2}_{syn}(\mathfrak{X},\mu_{p^{n}})\rightarrow H^{2}_{fl}(\mathfrak{X},\mu_{p^{n}})$ is an isomorphism.
\end{lem}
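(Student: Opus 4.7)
The plan is to compare the Kummer long exact sequences on the syntomic and fppf sites and conclude by the five-lemma, reducing the statement to the well-known invariance of $H^i(\mathfrak{X},\mathbb{G}_m)$ for $i\le 2$ across these topologies.

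First I would take the long exact cohomology sequences induced by the Kummer short exact sequence $0\to\mu_{p^n}\to\mathbb{G}_m\xrightarrow{p^n}\mathbb{G}_m\to 0$, which is exact on both $\mathfrak{X}_{syn}$ (as noted in the preceding example) and on $\mathfrak{X}_{fl}$ (classical). Since every syntomic cover is in particular fppf, the morphism of sites $\mathfrak{X}_{fl}\to\mathfrak{X}_{syn}$ yields a map of these long exact sequences; the map $j$ appears as the middle column of the resulting five-term commutative ladder, flanked on either side by the comparison maps $H^i_{syn}(\mathfrak{X},\mathbb{G}_m)\to H^i_{fl}(\mathfrak{X},\mathbb{G}_m)$ for $i=1,2$.

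Next I would verify that these four flanking vertical arrows are isomorphisms. For $i=1$ both groups identify with $\mathrm{Pic}(\mathfrak{X})$, which is insensitive to the choice of topology among Zariski, \'etale, syntomic, and fppf. For $i=2$ both compute the cohomological Brauer group $\mathrm{Br}(\mathfrak{X})$: this is a theorem of Grothendieck, essentially the same comparison result cited in Lemma~\ref{group} applied to the smooth group scheme $\mathbb{G}_m$. Explicitly, since the composite $H^i_{et}(\mathfrak{X},\mathbb{G}_m)\to H^i_{syn}(\mathfrak{X},\mathbb{G}_m)\to H^i_{fl}(\mathfrak{X},\mathbb{G}_m)$ is an isomorphism for $i\le 2$ and the outer terms already coincide with $\mathrm{Pic}$, respectively $\mathrm{Br}$, each of the two intermediate maps is forced to be an isomorphism.

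Finally the five-lemma forces $j$ itself to be an isomorphism. The only substantive input is the $H^2$-comparison for $\mathbb{G}_m$, i.e.\ the invariance of the Brauer group under the passage from syntomic to fppf; everything else is a formal diagram chase with the Kummer sequence. I do not expect any genuine obstruction: the lemma is really a bookkeeping consequence of the fact that $\mu_{p^n}$ differs from $\mathbb{G}_m$ only in a way that is already captured by the syntomic topology.
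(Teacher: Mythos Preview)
Your approach---Kummer sequence on both sites plus the five lemma, reducing to the comparison of $H^i(-,\G_m)$---is exactly the paper's. The gap is in your justification of the $i=2$ comparison for $\G_m$. From the composite
\[
H^i_{et}(\mathfrak{X},\G_m)\longrightarrow H^i_{syn}(\mathfrak{X},\G_m)\longrightarrow H^i_{fl}(\mathfrak{X},\G_m)
\]
being an isomorphism you \emph{cannot} conclude that each factor is an isomorphism; think of $\Z\hookrightarrow\Z\oplus\Z\twoheadrightarrow\Z$. Knowing that the two outer groups both equal $\mathrm{Br}(\mathfrak{X})$ does not determine the middle one. Your $i=1$ argument is fine (descent for line bundles genuinely gives $H^1_{syn}(\mathfrak{X},\G_m)=\Pic(\mathfrak{X})$), but for $i=2$ you are invoking a fact about syntomic cohomology that Lemma~\ref{group} does not directly provide.

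The paper closes this gap by using the actual content of Grothendieck's theorem rather than just its conclusion on cohomology groups: the higher direct images $R^j\G_m$ for the change of topology from fppf to \'etale vanish. Concretely, for $j>0$ every class in $H^j_{fl}(U,\G_m)=H^j_{et}(U,\G_m)$ is killed on an \'etale cover of $U$; since \'etale covers are syntomic, the syntomic sheafification of $U\mapsto H^j_{fl}(U,\G_m)$ is zero as well. Hence the higher direct images for $\mathfrak{X}_{fl}\to\mathfrak{X}_{syn}$ vanish on $\G_m$, giving $H^i_{syn}(\mathfrak{X},\G_m)\simeq H^i_{fl}(\mathfrak{X},\G_m)$ for all $i$. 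With this established, your five-lemma argument goes through verbatim.
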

\begin{proof}
The Kummer sequence \[0\rightarrow\mu_{p^{n}}\rightarrow\G_{m}\rightarrow\G_{m}\rightarrow 0\] is a short exact sequence both on $\mathfrak{X}_{fl}$ and on $\mathfrak{X}_{syn}$. It therefore induces long exact sequences fitting into commutative diagrams as follows
\[\xymatrix{
 H^{1}_{syn}(\mathfrak{X},\G_{m})\ar[r]\ar[d]& H^{1}_{syn}(\mathfrak{X},\G_{m})\ar[r]\ar[d] & H^{2}_{syn}(\mathfrak{X},\mu_{p^{n}})\ar[r]\ar[d] & H^{2}_{syn}(\mathfrak{X},\G_{m})\ar[r]\ar[d] & H^{2}_{syn}(\mathfrak{X},\G_{m})\ar[d] \\
 H^{1}_{fl}(\mathfrak{X},\G_{m})\ar[r]& H^{1}_{fl}(\mathfrak{X},\G_{m})\ar[r] & H^{2}_{fl}(\mathfrak{X},\mu_{p^{n}})\ar[r] & H^{2}_{fl}(\mathfrak{X},\G_{m})\ar[r] & H^{2}_{fl}(\mathfrak{X},\G_{m}).
\\
}\] The morphism of sites $\mathfrak{X}_{et}\stackrel{\varphi}{\longrightarrow}\mathfrak{X}_{fl}$ factors through $\mathfrak{X}_{syn}$, as $\mathfrak{X}_{et}\stackrel{\varphi'}{\longrightarrow}\mathfrak{X}_{syn}
\stackrel{\psi}{\longrightarrow}\mathfrak{X}_{fl}$. Lemma \ref{group} yields an equality $R^{j}\varphi_{\star}\G_m=0$. This in turns implies $R^{j}\varphi'_{\star}\G_m=0$, since the latter is the sheafification of the former for the syntomic topology. We therefore get isomorphisms $H^{i}_{syn}(\mathfrak{X},\G_{m})\simeq H^{i}_{fl}(\mathfrak{X},\G_{m})$. The lemma then follows by the five lemma.

\end{proof}
\begin{cor}\label{cyclic} The orthogonal complement of $\img(T(A)/p^n\stackrel{c_{p^n}}{\longrightarrow} H^2(K,\bigwedge^2A[p^n]))$ under the Tate pairing is the image of the composition
\[H^{2}_{syn}(\mathcal{A},\mu_{p^{n}})\rightarrow H^{2}_{et}(A,\mu_{p^{n}})\twoheadrightarrow H^{0}(K,\Hom(\bigwedge^{2}A[p^{n}],\mu_{p^{n}})).\]
\end{cor}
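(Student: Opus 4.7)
The plan is to derive this directly from Theorem \ref{flat} by swapping the flat cohomology group $H^{2}_{fl}(\mathcal{A},\mu_{p^{n}})$ for its syntomic counterpart, using the isomorphism already established in Lemma \ref{synflat}. The only thing to verify is that the map from syntomic cohomology of $\mathcal{A}$ to étale cohomology of the generic fiber $A$ fits compatibly with the one appearing in Theorem \ref{flat}, and this is pure functoriality of pullback along a morphism of Grothendieck sites.

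First I would record that the factorization $\mathcal{A}_{et}\to\mathcal{A}_{syn}\to\mathcal{A}_{fl}$, together with the pullback to the generic fiber $A\hookrightarrow\mathcal{A}$, produces a commutative square
\[
\begin{tikzcd}
H^{2}_{syn}(\mathcal{A},\mu_{p^{n}}) \ar{r}{\psi} \ar{d} & H^{2}_{fl}(\mathcal{A},\mu_{p^{n}}) \ar{d} \\
H^{2}_{et}(A,\mu_{p^{n}}) \ar{r}{=} & H^{2}_{et}(A,\mu_{p^{n}}),
\end{tikzcd}
\]
where the right vertical arrow is the map appearing in Theorem \ref{flat} (generic fiber restriction composed with the inverse of $\rho$ from Lemma \ref{group}), and the left vertical arrow is the map of the corollary. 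Since $\mu_{p^{n}}$ is étale over the field $K$, all of $H^{2}_{et}(A,\mu_{p^{n}})$, $H^{2}_{syn}(A,\mu_{p^{n}})$ and $H^{2}_{fl}(A,\mu_{p^{n}})$ coincide, so the equality along the bottom makes sense, and commutativity is immediate from the functoriality of pullback along the sequence of continuous site morphisms.

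Lemma \ref{synflat} asserts that the top arrow $\psi$ is an isomorphism, so the images of the two vertical arrows in $H^{2}_{et}(A,\mu_{p^{n}})$ coincide. Composing further with the Hochschild-Serre projection
\[
H^{2}_{et}(A,\mu_{p^{n}}) \twoheadrightarrow G^{0}/G^{1} \simeq H^{0}(K,\Hom(\bigwedge^{2}A[p^{n}],\mu_{p^{n}}))
\]
preserves this equality of images. By Theorem \ref{flat} (together with Lemma \ref{symbcyc}, which identifies the image of $c_{p^n}|_{T(A)}$ with the image of the Galois symbol $s_{p^n}$), this common image is precisely the orthogonal complement of $\img(c_{p^n})$ under the Tate pairing $(\star\star)$, which is the conclusion of the corollary. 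There is no serious obstacle: the proof is a formal compatibility check, with all substantive work already absorbed into Theorem \ref{flat} and Lemma \ref{synflat}.
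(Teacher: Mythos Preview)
Your proposal is correct and follows essentially the same approach as the paper: the paper's proof is simply the one-line remark that the corollary is an immediate consequence of Theorem \ref{flat} and Lemma \ref{synflat}. You have spelled out the compatibility diagram that makes this immediacy transparent, which is fine but not strictly necessary; note that invoking Lemma \ref{symbcyc} is superfluous here since the corollary is stated for the cycle map $c_{p^n}$ rather than the Galois symbol.
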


\begin{proof} This is an immediate consequence of theorem \ref{flat} and lemma \ref{synflat}.

\end{proof}
\subsection{The syntomic sheaf $S_{n}(r)$} Let $\mathfrak{X}$ be a syntomic scheme over $W(k)$. For an integer $n\geq 1$ we denote $\mathfrak{X}_n:=\mathfrak{X}\times\Z/p^n$.  We recall that for  such a scheme $\mathfrak{X}$ and for an integer $r\leq p-1$ we can define the following syntomic sheaves. First, the structure sheaf $\mathcal{O}_{\mathfrak{X}_n}$ of the absolute crystalline site (this means over $W_{n}=W\otimes\Z/p^{n}$). Second, the sheaves $\mathcal{O}_{n}^{cris},$ $\mathcal{J}_{n}$ and $\mathcal{J}_{n}^{[r]}$, defined as follows. 
\[\mathcal{O}_{n}^{cris}(\mathfrak{X})=\mathcal{O}_{n}^{cris}(\mathfrak{X}_1)=H^{0}_{cris}
((\mathfrak{X}_{1}/W_n)_{cris},\mathcal{O}_{\mathfrak{X}_{1}/W_n}),\;\;\;\;\;
\mathcal{J}_{n}=\ker(\mathcal{O}_{\mathfrak{X}_n}\to\mathcal{O}_{n}^{cris}),\] and $\mathcal{J}_{n}^{[r]}$ is the $r^{\text{th}}$ divided power of $\mathcal{J}_{n}$. Note that  $(\mathfrak{X}_{1}/W_n)_{cris}$ is the cristalline site of the special fiber, $\mathfrak{X}_{1}$. For $r\leq 0$ we define $\mathcal{J}_{\mathfrak{X}_{n}}^{[r]}=\mathcal{O}_{\mathfrak{X}_{n}}$. It is known (\cite{Fon-Mess}, II.1.3) that the presheaves $\mathcal{O}_{n}^{cris}$ and $\mathcal{J}_{n}^{[r]}$ are sheaves on $\mathfrak{X}_{n,syn}$. There is a Frobenius endomorphism   $\phi:\mathcal{O}_{n}^{cris}\rightarrow \mathcal{O}_{n}^{cris}$. If for example there exists a global embedding $\mathfrak{X}\subset\mathcal{Z}$, where $\mathcal{Z}$ is smooth over $W$, and with a compatible system of liftings of Frobenius $\{F_n:\mathcal{Z}_n\rightarrow\mathcal{Z}_n\}$, then the Frobenius $\phi$ is induced by the Frobenius on $\mathcal{Z}_n$ and the PD-envelope structure on $\mathcal{O}_n^{cris}(\mathfrak{X})$. This Frobenius has the property $\phi(\mathcal{J}_{n}^{[r]})\subset p^{r}\mathcal{O}_{n}$. The assumption $r\leq p-1$ is necessary for this last property. To make it work for larger values of $r$, a modification of $\mathcal{J}_{n}^{[r]}$ is needed (see \cite{Ertl/Niziol}). One can check that $\mathcal{J}_{n}^{[r]}$ is flat over $\Z/p^n$ and we have an isomorphism $\mathcal{J}_{n+1}^{[r]}\otimes\Z/p^n\simeq\mathcal{J}_{n}^{[r]}$. This allows us to define the divided Frobenius map, $\phi_{r}="p^{-r}\phi":\mathcal{J}_{n}^{[r]}\rightarrow\mathcal{O}_{n}^{cris}$. We define the syntomic sheaf $S_{n}(r)$ as the mapping cone,
\[S_{n}(r):=\Cone(\mathcal{J}_{n}^{[r]}\stackrel{1-\phi_{r}}{\longrightarrow}
\mathcal{O}_{n}^{cris})[-1].\] Alternatively, we have a short exact sequence of complexes
$0\rightarrow S_{n}(r)\rightarrow \mathcal{J}_{n}^{[r]}\stackrel{1-\phi_{r}}{\longrightarrow}
\mathcal{O}_{n}^{cris}\rightarrow 0,$ and therefore as a complex, $ S_{n}(r)=\ker(\mathcal{J}_{n}^{[r]}\stackrel{1-\phi_{r}}{\longrightarrow}\mathcal{O}_{n}^{cris})$ (the kernel being in the category of complexes).
It is known (\cite{Fon-Mess}, III.3.2) that for $r=1$ the syntomic sheaf $S_{n}(1)$ is isomorphic to $\mu_{p^{n}}$.

\subsection{Computing syntomic cohomology using differential forms}
Instead of using the syntomic site, we usually compute syntomic cohomology using de Rham complexes. In particular, assume first that there exists a closed embedding $\mathfrak{X}\hookrightarrow Z_{0}$ where $Z_{0}$ is smooth over $W(k)$ endowed with a compatible system of liftings of Frobenius $\{F_{n}:Z_{0,n}\rightarrow Z_{0,n}\}$. For every $n\geq 1$ we have embedding $\mathfrak{X}_{n}\hookrightarrow Z_{0,n}$ and therefore a surjection of \'{e}tale sheaves $\mathcal{O}_{Z_{0,n}}\twoheadrightarrow \mathcal{O}_{\mathfrak{X}_{n}}$. We consider the PD-envelope
$D_{ Z_{0,n}}(\mathfrak{X}_{n})$ of $\mathfrak{X}$ in $Z_{0,n}$. Let $J_{\mathfrak{X}_{n}\subset Z_{0,n}}$ be the PD-ideal generated by the kernel, $\ker(\mathcal{O}_{Z_{0,n}}\twoheadrightarrow \mathcal{O}_{\mathfrak{X}_{n}})$, and for $0<r\leq p-1$, let $J_{\mathfrak{X}_{n}\subset Z_{0,n}}^{[r]}$ be the $r$'th divided power of $J_{\mathfrak{X}_{n}\subset Z_{0,n}}$. We consider the de Rham complexes of sheaves on $\mathfrak{X}_{et}$,
\[ \xymatrix{
 D_{ Z_{0,n}}(\mathfrak{X}_{n})\ar[r] & D_{ Z_{0,n}}(\mathfrak{X}_{n})\bigotimes_{\mathcal{O}_{Z_{0,n}}}\Omega^{1}_{Z_{0,n}/W_{n}}\ar[r]^{d} & D_{ Z_{0,n}}(\mathfrak{X}_{n})\otimes_{\mathcal{O}_{Z_{0,n}}}\Omega^{2}_{Z_{0,n}/W_{n}}\ar[r] &\cdots
}
\] and
\[ \xymatrix{
 \mathcal{J}_{\mathfrak{X}_{n}\subset Z_{0,n}}^{[r]}\ar[r] & \mathcal{J}^{[r-1]}_{\mathfrak{X}_{n}\subset Z_{0,n}}\otimes_{\mathcal{O}_{Z_{0}}}\Omega^{1}_{Z_{0}/W}\ar[r]^{d} & \mathcal{J}_{\mathfrak{X}_{n}\subset Z_{0,n}}^{[r-2]}\otimes_{\mathcal{O}_{Z_{0}}}\Omega^{2}_{Z_{0}/W}\ar[r] &\cdots
}
\] From now on we will denote them as $\mathbb{O}_{\mathfrak{X}_{n}\subset Z_{0,n}}:=D_{ Z_{0,n}}(\mathfrak{X}_{n})\otimes_{\mathcal{O}_{Z_{0,n}}}\Omega^{\bullet}_{Z_{0,n}/W_{n}}$ and
$\mathbb{J}^{[r]}_{\mathfrak{X}_{n}\subset Z_{0,n}}:=\mathcal{J}^{[r-\bullet]}_{\mathfrak{X}_{n}\subset Z_{0,n}}\otimes_{\mathcal{O}_{Z_{0,n}}}\Omega^{\bullet}_{Z_{0,n}/W_{n}}$.
Furthermore for $0\leq r\leq p-1$ we consider the complex of \'{e}tale sheaves
\[S_{n}(r)_{\mathfrak{X},Z_{0}}:=\Cone(\mathbb{J}^{[r]}_{\mathfrak{X}_{n}\subset Z_{0,n}}\stackrel{1-\phi_{r}}{\longrightarrow}\mathbb{O}_{\mathfrak{X}_{n}\subset Z_{0,n}})
[-1],\] where for a definition of the Frobenius map $\phi_r$ we refer to \cite{Kato5}, cor. 1.5. 
This complex is up to canonical quasi-isomorphism independent of the choice of the embedding $X_{0}\hookrightarrow Z_{0}$ and the liftings of the Frobenius, so it gives a well defined object in the derived category $D^{+}(\mathfrak{X}_{et},\Z/p^{n})$ of bounded above complexes. From now on we will omit the subscript $Z_{0}$ from the notation and write simply $S_{n}(r)_{\mathfrak{X}}$.

In general, if such a global embedding does not exist, we find local embeddings and everything glues well in the derived category. It is a theorem of K. Kato (th. 4.3 in \cite{Kato5}) that the following equality holds for syntomic cohomology
\[H^{i}(\mathfrak{X}_{syn},S_{n}(r))\simeq H^{i}(\mathfrak{X}_{et},S_{n}(r)_{\mathfrak{X}}).\]

\begin{rem} We will very soon focus on the case $r=1$. We note that the complex $S_{n}(1)_{\mathfrak{X}}$
is quasi-isomorphic to the complex
 \[ \xymatrix{ \mathcal{J}^{[1]}_{\mathfrak{X}_{n}\subset Z_{0,n}}\ar[r]\ar[d]^{1-\varphi_{1}} & D_{ Z_{0,n}}(\mathfrak{X}_{n})\otimes_{\mathcal{O}_{Z_{0}}}\Omega^{1}_{Z_{0,n}/W_{n}}\ar[d]^{1-\varphi_{1}}\\
D_{ Z_{0,n}}(\mathfrak{X}_{n})\ar[r] & D_{ Z_{0,n}}(\mathfrak{X}_{n})\otimes_{\mathcal{O}_{Z_{0,n}}}\Omega^{1}_{Z_{0}/W_{n}}
}.
\] For, the map $\varphi_{1}:D_{ Z_{0,n}}(\mathfrak{X}_{n})\otimes_{\mathcal{O}_{Z_{0,n}}}\Omega^{i}_{Z_{0,n}/W_{n}}\rightarrow D_{ Z_{0,n}}(\mathfrak{X}_{n})\otimes_{\mathcal{O}_{Z_{0,n}}}\Omega^{i}_{Z_{0,n}/W_{n}}$ is divisible by $p$ for $i\geq 2$. This is because, $\phi_{1}$ on $\Omega^{i}_{Z_{0,n}/W_{n}}$ has the property  $\phi_{1}=p^{i-1}\phi_{i}$ and hence  $\phi_{1}$ on
$D_{ Z_{0,n}}(\mathfrak{X}_{n})\otimes_{\mathcal{O}_{Z_{0,n}}}\Omega^{i}_{Z_{0,n}/W_{n}}$ becomes $p\otimes p^{i-2}\phi_{i}$. The remark then follows, since both the above complexes  are complexes of sheaves on $\mathfrak{X}_{n}$ which has the same underlying topological space as $\mathfrak{X}_{1}$.
\end{rem}
\vspace{1pt}
\section{The categories of Breuil Modules}
The next step is to pass from the syntomic cohomology group to crystalline cohomology. We first need some background on Breuil modules.

We fix a uniformizer $\pi$ of $K$ and let $E(u)$ be its minimal polynomial (an Eisenstein polynomial of degree $e$). We will denote by $\sigma:W(k)\rightarrow W(k)$ the absolute Frobenius map and let $g:W[u]\rightarrow \mathcal{O}_{K}$ be the surjection sending $E(u)$ to $0$ (or $u$ to $\pi$).
\subsection{The ring $S$} We consider the ring $S:=W[[u]]^{PD}$, which is the $p$-adic  completion of the ring of divided powers of the polynomial ring $W[u]$ corresponding to the surjection $g$. The ring $S$ is endowed with the following structures.
\begin{itemize}\item A Frobenius operator, which is the unique continuous, $\sigma-$semilinear application $S\stackrel{\phi}{\longrightarrow} S$ that sends $u$ to $u^{p}$.
\item A decreasing filtration $\{\Fil^{i}S\}_{i\geq 0}$, where $\Fil^{i}S$ is the $p$-adic completion of the ideal generated by $\{\displaystyle \frac{E(u)^{n}}{n!},\;n\geq i\}$. In particular, $\Fil^{1}S$ is the $p$-adic completion of the kernel of $g$ and for $i\leq p-1$ the ideals $\Fil^{i}S$ are just powers of $\Fil^{1}S$.
\item A continuous linear derivation $N:S\rightarrow S$ such that $N(u)=-u$.
\end{itemize}
The Frobenius, the filtration and the derivation satisfy the following important properties.
\begin{enumerate}[(a)]\item For $0\leq i\leq p-1$, $\phi(\Fil^{i}S)\subset p^{i}S$, so we can define $\phi_{i}:\Fil^{i}(S)\rightarrow S$, where  $\displaystyle\phi_{i}:=\frac{\phi}{p^{i}}|_{\Fil^{i}}$. Moreover, we denote by $c=\phi_{1}(E(u))$. It is well known that $c$ is a unit of $S$.
\item $N\phi=p\phi N$
\item $N(\Fil^{i}S)\subset \Fil^{i-1}S$, for $i\geq 1$.
\end{enumerate}
\subsection{Strongly divisible and filtered free modules} The main references for this section are \cite{Bre1} and \cite{Bre2}. We fix a positive integer $1\leq r\leq p-1$.
\begin{defn}\label{mod1} Let $'\Mod^{\phi,N}_{[0,r]}$ be the category  with objects quadruples $(M, \Fil^{r}M,\phi_{r}, N)$, where
\begin{enumerate}\item
$M$ is an $S$-module
\item $\Fil^{r}M$ is an $S$-submodule such that ($\Fil^{r}S)\cdot M\subset \Fil^{r}M$
\item $\phi_{r}:\Fil^{r}M\rightarrow M$ is a $\phi$-semilinear map such that $\phi_{r}(sx)=c^{-r}\phi_{r}(s)\phi_{r}(E(u)^{r}x)$, for every $s\in \Fil^{r}S$, $x\in M$.
\item $N:M\rightarrow M$ is $W$-linear endomorphism satisfying $N(sx)=N(s)x+sN(x)$ for $s\in S, x\in M$.
\end{enumerate}
The morphisms in $'\Mod^{\phi,N}_{[0,r]}$ are $S$-linear maps that preserve $\Fil^{r}$ and commute with $\phi_{r}, N$.
\end{defn} We denote by $'\Mod^{\phi}_{[0,r]}$ the category which forgets $N$ in the definition of $'\Mod^{\phi,N}_{[0,r]}$.
\begin{rem}
In the above definition, we can alternatively define the objects of $'\Mod^{\phi,N}_{[0,r]}$ as quadruples $(M, \Fil^{r}M,\phi_{0}, N)$, with $\phi_{0}:M\rightarrow M$ a $\sigma-$semilinear map satisfying $\phi_{0}(\Fil^{r}M)\subset p^{r}M$. Indeed, if such a $\phi_{0}$ exists, we can define $\displaystyle\phi_{r}=\frac{\phi_{0}}{p^{r}}|_{\Fil^{r}M}$. Conversely, if we are given $\phi_{r}$ as in definition (\ref{mod1}), we can define $\phi_{0}:M\rightarrow M$ by $\phi_{0}(x)=c^{-r}\phi_{r}(E(u)^{r}x)$.
\end{rem}
We next consider the full subcategory of $'\Mod^{\phi,N}_{[0,r]}$ of strongly divisible modules of weight $\leq r$. The following definition is theorem 2.2.3. in \cite{Bre2}.
\begin{defn}\label{mod2} The category $\Mod^{\phi,N}_{[0,r]}$ (resp. $\Mod^{\phi}_{[0,r]}$) of strongly divisible modules of weight $\leq r$ is the full subcategory of $'\Mod^{\phi,N}_{[0,r]}$ (resp. $'\Mod^{\phi}_{[0,r]}$)  of objects $(M, \Fil^{r}M,\phi_{r}, N)$ satisfying the following conditions:
\begin{enumerate}\item $M$ is a free $S$-module of finite rank.
\item $\Fil^{r}M\cap pM=p\Fil^{r}M$
\item $\phi_{r}(\Fil^{r}M)$ generates $M$ as an $S$-module
\item $E(u)N(\Fil^{r}M)\subset \Fil^{r}M$ (Griffith's transversality)
\item The following diagram is commutative,
\[\xymatrix{
& \Fil^{r}M\ar[r]^{\phi_{r}}\ar[d]^{E(u)N} & M \ar[d]^{cN}\\
& \Fil^{r}M\ar[r]^{\phi_{r}} & M.
\\
}\]
\end{enumerate}
\end{defn}
\begin{conv} If $(M,\Fil^{r}M,\phi_{r},N)$ is a strongly divisible module, by abuse of notation we will denote this module by $M$.
\end{conv}
\begin{rem} (1.) If $r+1\leq p-1$, there is a full faithful embedding $\Mod^{\phi,N}_{[0,r]}\subset \Mod^{\phi,N}_{[0,r+1]}$. In fact, if $M$ is a strongly divisible module of weight $r$, then it becomes a module of weight $r+1$ if we define $\Fil^{r+1}(M)=\Fil^{1}S\cdot\Fil^{r}M+\Fil^{r+1}S\cdot M$. \\
(2.) Because strongly divisible modules are required to be free over $S$, and hence torsion-free, condition (2) of definition (\ref{mod2}) is equivalent to requiring $M/\Fil^{r}M$ to have no $p$-torsion.
\end{rem}
\begin{exmp} For $0\leq r\leq p-1$ we denote by $\mathbf{1}_{r}$ the rank one free object of $\Mod_{[0,r]}^{\phi,N}$, namely the module $(S,\Fil^{r}(S),\phi_{r},N)$. Moreover, we define the $-r$-twist of $\mathbf{1}_{r}$ to be the filtered free module $\mathbf{1}_{r}[-r]$, which has $S$ as the underlying $S$-module, $\Fil^{r}(\mathbf{1}_{r}[-r])=S$, $\phi_{r}(1)=1$ and $N=0$.
\end{exmp}
We have the following straightforward lemma.
\begin{lem}\label{twist} Let $M$ be a strongly divisible module of weight $1\leq r\leq p-2$. There is a canonical isomorphism
\[\Hom_{S,\Fil^{r},\phi_{r},N}(\mathbf{1}_{1}[-1],M)\simeq\{e\in\Fil^{1}(M)/\phi_{1}(e)=e, N(e)=0\}.\]
\end{lem}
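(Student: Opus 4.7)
The plan is to set up the bijection explicitly by $f \mapsto e := f(1)$ (with inverse $e \mapsto (s \mapsto s e)$) and then read off each of the defining compatibilities for a morphism in $\Mod^{\phi,N}_{[0,r]}$ against the explicit structure on $\mathbf{1}_{1}[-1]$: underlying module $S$, $\Fil^{1} = S$, $\phi_{1}(1) = 1$, and $N(1) = 0$. Since $\mathbf{1}_{1}[-1]$ is free of rank one with generator $1$, an $S$-linear map is uniquely determined by its value at $1$, so both directions are set-theoretically well-defined, and everything reduces to checking compatibility axioms.

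Unpacking each compatibility against the distinguished generator $1$: viewing $\mathbf{1}_{1}[-1]$ inside $\Mod^{\phi,N}_{[0,r]}$ via the fully faithful embedding from Remark 1 and iterating the recipe $\Fil^{i+1} = \Fil^{1}S \cdot \Fil^{i} + \Fil^{i+1}S \cdot M$ gives $\Fil^{r}(\mathbf{1}_{1}[-1]) = \Fil^{r-1}S$, so the axiom $f(\Fil^{r}) \subset \Fil^{r}M$ becomes $\Fil^{r-1}S \cdot e \subset \Fil^{r}M$, which will be equivalent to $e \in \Fil^{1}M$ by strong divisibility. The $\phi_{r}$-compatibility, tested on $x = E(u)^{r-1}$ and using that $\phi_{r-1}(E(u)^{r-1}) = c^{r-1}$ is a unit together with the identity $\phi_{r}(s e) = \phi_{r-1}(s) \phi_{1}(e)$ for $s \in \Fil^{r-1}S$ (which follows from $\phi_{0} = p \phi_{1}$ on $\Fil^{1}M$ and $\phi_{r} = \phi_{0}/p^{r}$), collapses to $\phi_{1}(e) = e$. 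Finally, the $N$-compatibility evaluated at $1$ gives $N(e) = 0$, since $N(1) = 0$ on $\mathbf{1}_{1}[-1]$.

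Conversely, given $e \in \Fil^{1}M$ with $\phi_{1}(e) = e$ and $N(e) = 0$, the assignment $f(s) := s e$ is tautologically $S$-linear, and the three conditions above let us run the previous computations in reverse to verify the remaining axioms (using the Leibniz rule $N(sx) = N(s) x + s N(x)$ on $M$ to handle a general input $s$, and $\phi$-semilinearity of $\phi_{r}$ to handle general products $s e$). The only delicate point, and the main obstacle of the proof, is the filtration-level matching of the intrinsic condition $\Fil^{r-1}S \cdot e \subset \Fil^{r}M$ with the cleaner statement $e \in \Fil^{1}M$ appearing in the lemma. This rests on the fact that for a strongly divisible module of weight $\leq r \leq p-2$, the datum $\Fil^{r}M$ canonically extends to a decreasing filtration $\Fil^{i}M$ for $0 \leq i \leq r$ together with compatible divided Frobenii $\phi_{i} : \Fil^{i}M \to M$, where in particular $\Fil^{1}M = \{x \in M : E(u)^{r-1} x \in \Fil^{r}M\}$; once this is in place, all remaining verifications reduce to routine semilinear-algebra bookkeeping.
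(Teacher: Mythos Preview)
Your proof is correct and follows essentially the same approach as the paper: both determine an $S$-linear map $f:\mathbf{1}_{1}[-1]\to M$ by its value $e=f(1)$ and then translate the filtration, Frobenius, and monodromy compatibilities into the conditions $e\in\Fil^{1}M$, $\phi_{1}(e)=e$, and $N(e)=0$. The paper's proof is a single sentence asserting this translation follows ``directly from the definitions''; you have supplied the details it omits, in particular the computation $\Fil^{r}(\mathbf{1}_{1}[-1])=\Fil^{r-1}S$ via the weight-raising recipe and the identification $\Fil^{1}M=\{x\in M:E(u)^{r-1}x\in\Fil^{r}M\}$ needed to match the filtration condition.
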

\begin{proof} The proof follows directly from the definitions, after we observe that a $S$-linear morphism $f:\mathbf{1}_{1}[-1]\rightarrow M$ is completely determined by $f(1)=e$ and for this to preserve the filtration, the Frobenius and the derivation, it is a necessary and sufficient condition that $f(1)\in\Fil^{1}M$ such that $\phi_{1}(e)=0$ and $N(e)=0$.

\end{proof}
\subsection*{Functors to Galois representations} The category $\Mod^{\phi,N}_{[0,r]}$ is endowed with a contravariant functor, $T_{st}^{\star}:\Mod^{\phi,N}_{[0,r]}\rightarrow \Z_{p}[G_{K}]-\Mod$. The functor $T_{st}^{\star}$ assigns to a strongly divisible module $M$ the following $\Z_{p}[G_{K}]-$module,
\[T_{st}^{\star}=\Hom_{S,\Fil^{\cdot},\phi,N}(M,\widehat{A}_{st}).\]
Here $\widehat{A}_{st}$ is a certain period ring. We will not need the precise definition of $\widehat{A}_{st}$ in what follows, so we do not review its definition. For more details see for example \cite{Breuil10}.

The result that is mostly useful to us is that $T_{st}^{\star}$ becomes fully faithful, if we restrict to the smaller category of filtered-free modules. We review the definition of this category below.
\begin{defn} A strongly divisible module $(M,\Fil^{r}M,\phi_{r}, N)\in\Mod^{\phi,N}_{[0,r]}$ is called filtered-free if there exists a basis $\{e_{1},\cdots,e_{d}\}$ of $M$ as an $S$-module such that there exist integers $0\leq r_{1}\leq\cdots\leq r_{d}\leq r$ for $1\leq i\leq d$ such that
\[\Fil^{r}M=\bigoplus_{i=1}^{d}E(u)^{r_{i}}Se_{i}+(\Fil^{r}S)M.\] If such a basis exists, we call it an adapted basis for $M$ with corresponding integer sequence $\{0\leq r_{1}\leq\cdots\leq r_{d}\leq r\}$.
\end{defn}
We note that most of the strongly divisible modules are not filtered free. But all modules of weight $r\leq 1$ are filtered free. Moreover, those strongly divisible modules which arise from crystalline cohomology are filtered free as well, as we shall see in the next section.
\begin{rem} We note that the condition $\displaystyle\Fil^{r}M=\bigoplus_{i=1}^{d}E(u)^{r_{i}}Se_{i}+(\Fil^{r}S)M$ is equivalent to $\displaystyle\Fil^{r}M=\bigoplus_{i=1}^{d}(\Fil^{r_{i}}S)e_{i}$.
\end{rem}
\begin{lem}\label{phi-basis} If $\{e_{1},\cdots,e_{r}\}$ is an adapted basis for $M$, then the vectors $\{\phi_{r}(E(u)^{r_{i}}e_{i})\}$ form another $S$-basis of $M$.
\end{lem}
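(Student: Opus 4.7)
The plan is to prove the lemma in two stages. First I would show that the elements $f_i := \phi_r(E(u)^{r_i} e_i)$ generate $M$ as an $S$-module. Then I would invoke the fact that a surjection between free $S$-modules of the same finite rank is automatically an isomorphism, which forces $\{f_i\}$ to be a basis.

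For the generation step, condition (3) of Definition \ref{mod2} tells us that $\phi_r(\Fil^r M)$ generates $M$ as an $S$-module, so it suffices to show the inclusion $\phi_r(\Fil^r M) \subset \sum_i S f_i$. Using the adapted basis description, an arbitrary element of $\Fil^r M$ can be written as $\sum_i a_i E(u)^{r_i} e_i + \sum_j s_j m_j$ with $a_i \in S$, $s_j \in \Fil^r S$, and $m_j \in M$. The first sum contributes $\sum_i \phi(a_i)\, f_i$ directly by the $\phi$-semilinearity of $\phi_r$ on $\Fil^r M$. For the second sum, the multiplicative relation from Definition \ref{mod1}(3) gives $\phi_r(s_j m_j) = c^{-r}\phi_r(s_j)\,\phi_r(E(u)^r m_j)$; expanding $m_j = \sum_k b_{jk} e_k$ in the adapted basis and factoring $E(u)^r = E(u)^{r-r_k} \cdot E(u)^{r_k}$, the semilinearity of $\phi_r$ yields $\phi_r(E(u)^r e_k) = \phi(E(u))^{r-r_k}\, f_k$. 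Thus $\phi_r(s_j m_j)$ is an $S$-linear combination of the $f_k$'s, completing the verification.

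The second stage is pure commutative algebra: the $d := \mathrm{rank}_S M$ generators $\{f_i\}$ define a surjective $S$-module map $S^d \twoheadrightarrow M$, and since $M$ is itself free of rank $d$, this is a surjective endomorphism of the finitely generated module $S^d$. By the classical Cayley-Hamilton/determinant-trick argument, every surjective endomorphism of a finitely generated module over a commutative ring is injective, hence an isomorphism. Consequently $\{f_i\}$ is an $S$-basis of $M$.

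I do not anticipate a serious obstacle: the argument is a combination of a formal semilinearity computation and a standard algebraic fact. The only subtle point in the computation is keeping straight the distinction between the plain $\phi$-semilinearity of $\phi_r$ on all of $\Fil^r M$ and the special multiplicative formula valid only for factors in $\Fil^r S$; the factorization $E(u)^r = E(u)^{r-r_k} E(u)^{r_k}$ is exactly what is needed to reconcile the two.
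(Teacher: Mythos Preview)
Your proof is correct and follows essentially the same two-step strategy as the paper (show spanning, then use that $d$ generators of a free rank-$d$ module form a basis). The paper's proof is extremely terse: it cites Caruso for $r=1$ and for general $r$ only says ``using Nakayama lemma we can show that the vectors span $M$ and they have the correct cardinality.'' Your direct computation showing $\phi_r(\Fil^r M)\subset\sum_i S f_i$ via the semilinearity relation and the factorization $E(u)^r=E(u)^{r-r_k}E(u)^{r_k}$ is a cleaner, self-contained substitute for the unexplained Nakayama step, and your Cayley--Hamilton argument makes explicit what the paper leaves implicit in ``correct cardinality.''
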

\begin{proof} The case $r=1$ was proved by Caruso in \cite{Caruso} (in the proof of lemma 2.1.1). The general case is similar. Namely, using Nakayama lemma we can show that the vectors $\{\phi_{r}(E(u)^{r_{i}}e_{i})\}$ span $M$ and they have the correct cardinality.

\end{proof}
\begin{rem} We note that Faltings in \cite{Fal} defines filtered free modules in a slightly different way. Namely, he defines the category $\mathcal{MF}_{[0,r]}(\mathcal{O}_{K})$ to have objects filtered $S$-modules $M$ having a filtration $\{\Fil^{i}M\}_{0\leq i\leq r}$, a connection $\nabla$ and a Frobenius $\phi$ that  satisfy the following properties:
\begin{enumerate}\item $M$ is a free $S$-module having a basis $\{e_{1},\cdots,e_{r}\}$ such that for $j\leq r$ there exists integers $0\leq q_{i}\leq r$ such that $\displaystyle\Fil^{j}M=\bigoplus_{i=1}^{d} \Fil^{q_{i}}(S)e_{i}$.
\item The connection $\nabla$ satisfies Griffiths transversality, namely
\[\nabla(\frac{\partial}{\partial u})(\Fil^{i}(M))\subset \Fil^{i-1}(M).\]
\item $\phi:M\rightarrow M$ is a $\sigma$-semilinear endomorphism of $M$ such that $\phi|_{\Fil^{i}(M)}\subset p^{i}M$ and $\nabla\phi_{r}=\phi_{r-1}\nabla$. Moreover the elements $\phi_{r_{i}}(e_{i})$ form a new basis of $M$, where $r_{i}=\min\{j\leq r:e_{i}\in\Fil^{j}(M)\}$ and $\displaystyle\phi_{r_{i}}=\frac{\phi}{p^{r_{i}}}$.
\end{enumerate} The morphisms in $\mathcal{MF}_{[0,r]}(\mathcal{O}_{K})$ are $S$-linear maps that preserve the filtration and commute with $\phi$ and $\nabla$.

We can see that the two definitions are equivalent. Clearly every filtered free module in the sense of Faltings is filtered free in the sense of Breuil. For the converse, let $(M,\Fil^{r}(M),\phi_{r},N)$ be a filtered free Breuil module having an adapted basis $\{e_{1},\cdots,e_{r}\}$ with corresponding integer sequence $\{0\leq r_{1}\leq\cdots\leq r_{d}\leq r\}$. Then we can define for $i\leq r$ the $S$-submodule $\displaystyle\Fil^{i}(M)=\{x\in M:E(u)^{r-i}x\in\Fil^{r}M\}$. The condition that $\{\phi_{r}(E(u)^{r_{i}}e_{i})\}$ form a new basis is equivalent to $\{\phi_{r-r_{i}}(e_{i})\}$ to form a new basis. Finally, the transversality condition (2) of Faltings is equivalent to condition (4) in definition \ref{mod2}. From now one we will use mostly Breuil's definition, but we will be interchanging between the two definitions without mentioning it.
\end{rem}

\subsection{Tensor product of filtered free modules} In this section we define the tensor product of two filtered free modules and prove some elementary properties of this product.
\begin{defn}\label{tensor} Let $0\leq r,m\leq p-1$ be two integers such that $r+m\leq p-1$. Let $M_{1}\in\mathcal{MF}_{[0,r]}(\mathcal{O}_{K})$, $M_{2}\in\mathcal{MF}_{[0,m]}(\mathcal{O}_{K})$ be two filtered-free modules of weight $\leq r$ and $\leq m$ respectively.  We define $M_{1}\otimes M_{2}$ to be the object of $\mathcal{MF}_{[0,r+m]}(\mathcal{O}_{K})$ with
\begin{enumerate}\item $M_{1}\otimes_{S} M_{2}$ as the underlying $S$-module
\item $\Fil^{n}(M_{1}\otimes M_{2})=\sum_{i+j=n}\img[\Fil^{i}M_{1}\otimes\Fil^{j}M_{2}\rightarrow M_{1}\otimes M_{2}]+(\Fil^{n}S)(M_{1}\otimes M_{2})$, for $n\leq r+m$.
\item $\phi_{0}:M_{1}\otimes M_{2}\rightarrow M_{1}\otimes M_{2}$ to be $\phi_{0}=\phi_{0,M_{1}}\otimes\phi_{0,M_{2}}$
\item $N:M_{1}\otimes M_{2}\rightarrow M_{1}\otimes M_{2}$ by $N(m\otimes n)=N(m)\otimes n+m\otimes N(n)$.
\end{enumerate}
\end{defn}

It is straightforward that $M_{1}\otimes_{S}M_{2}$ is indeed a filtered-free module, after we observe that if $\{e_{1},\cdots,e_{d_{1}}\}$ and $\{w_{1},\cdots,w_{d_{2}}\}$ are adapted basis for $M_{1}$, $M_{2}$ respectively with corresponding sequences $\{0\leq r_{1}\leq\cdots r_{d_{1}}\leq r\}$ and $\{0\leq m_{1}\leq\cdots m_{d_{2}}\leq m\}$, then $\{e_{i}\otimes w_{j}\}$ becomes an adapted basis for $M_{1}\otimes M_{2}$ after maybe some reordering of the base elements.
\begin{rem} One could try extend the above definition to any two strongly divisible modules. The issue that arises is that it is not easy to verify the property $\Fil^{r+m}(M_{1}\otimes M_{2})\cap p(M_{1}\otimes M_{2})=p\Fil^{r+m}(M_{1}\otimes M_{2})$ without the filtered free hypothesis. In this draft all the modules we are interested in will be filtered free, so we treat only this particular case.
\end{rem}
\subsection{The dual module} In this subsection we forget the monodromy operator $N$ and we work in the subcategory of $\Mod_{[0,r]}^{\phi}$ of filtered free modules.

Caruso in \cite{Caruso} defined for a Breuil  module $M$ of weight 1, a dual module $M^{\star}$. His definition immediately generalizes to all filtered free modules. We review the definition here.
\begin{defn} Let $(M,\Fil^{r}M,\phi_{r})$ be a filtered-free module of weight $r\leq p-1$. Let $\{e_{1},\cdots,e_{d}\}$ be a basis adapted for $M$ with corresponding integer sequence $\{0\leq r_{1}\leq\cdots\leq r_{d}\leq r\}$. The dual module $(M^{\star},\Fil^{r}M^{\star},\phi_{r}^{\star})$ is defined as follows:
\begin{enumerate}\item $M^{\star}=\Hom_{S}(M,S)$
\item $\Fil^{r}(M^{\star})=\{f\in M^{\star}:f(\Fil^{r}M)\subset\Fil^{r}S\}$
\item $\phi_{r}^{\star}:\Fil^{r}(M^{\star})\rightarrow M^{\star}$ maps a function $f\in\Hom_{S}(M,S)$ to the function   $\phi_{r}^{\star}(f)$ which is defined on the basis $\{\phi_{r}(E(T)^{r_{1}}e_{1}),\cdots,\phi_{r}(E(T)^{r_{d}}e_{d})\}$ by $\phi_{r}^{\star}(f)(\phi_{r}(E(T)^{r_{i}}e_{i}))=\phi_{r}(f(E(T)^{r_{i}}e_{i}))$.
\end{enumerate}
\end{defn}
\begin{prop}(prop. 2.2.5 in \cite{Caruso}) The functor $M\rightarrow M^{\star}$ induces an exact anti-equivalence on $\Mod_{[0,r]}^{\phi}$ and $(M^{\star})^{\star}=M$.
\end{prop}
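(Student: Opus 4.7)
\begin{Proof}[Proof plan]
The plan is to establish the proposition by four steps: well-definedness of $M^{\star}$, construction of a canonical double-duality isomorphism, deduction of the anti-equivalence from the involution property, and finally exactness from the filtered-free structure.

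\textbf{Step 1 (Well-definedness of $M^{\star}$).} Fix a filtered-free $M$ with adapted basis $\{e_{1},\dots,e_{d}\}$ and sequence $0\le r_{1}\le\cdots\le r_{d}\le r$. The underlying $S$-module $M^{\star}=\Hom_{S}(M,S)$ and the submodule $\Fil^{r}(M^{\star})$ manifestly depend only on $(M,\Fil^{r}M)$. Writing $\{e_{i}^{\vee}\}$ for the dual basis, an element $f=\sum a_{i}e_{i}^{\vee}$ lies in $\Fil^{r}M^{\star}$ iff $(\Fil^{r_{i}}S)\cdot a_{i}\subseteq\Fil^{r}S$ for every $i$; using the structure of the divided-power filtration on $S$, I would check that this is equivalent to $a_{i}\in\Fil^{r-r_{i}}S$. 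Hence $\{e_{i}^{\vee}\}$ is adapted for $M^{\star}$ with dual sequence $\{r-r_{d}\le\cdots\le r-r_{1}\}$ (after reordering). In particular $M^{\star}$ is filtered free, has no $p$-torsion in $M^{\star}/\Fil^{r}M^{\star}$, and lies in $\Mod^{\phi}_{[0,r]}$ provided the Frobenius $\phi_{r}^{\star}$ is shown to satisfy the axioms.

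\textbf{Step 2 (The main obstacle: the Frobenius on $M^{\star}$).} By Lemma~\ref{phi-basis}, $\{\phi_{r}(E(u)^{r_{i}}e_{i})\}$ is another $S$-basis of $M$; the definition of $\phi_{r}^{\star}$ thus prescribes its values on a basis and is well-defined as an $S$-semilinear map. The nontrivial checks are:
\begin{enumerate}
\item[(a)] $\phi_{r}^{\star}$ restricted to $\Fil^{r}M^{\star}$ lands in $M^{\star}$, and the identity $\phi_{r}^{\star}(sf)=c^{-r}\phi_{r}(s)\phi_{r}^{\star}(E(u)^{r}f)$ for $s\in\Fil^{r}S$ holds;
\item[(b)] $\phi_{r}^{\star}(\Fil^{r}M^{\star})$ generates $M^{\star}$ as an $S$-module;
\item[(c)] the definition is independent of the choice of adapted basis.
\end{enumerate}
For (a) and (b), I would test the formula on the basis $\{\phi_{r}(E(u)^{r_{i}}e_{i})\}$ and use the fact $\phi_{1}(E(u))=c\in S^{\times}$, together with Lemma~\ref{phi-basis} applied to the dual basis, which shows that $\phi_{r}^{\star}(E(u)^{r-r_{i}}e_{i}^{\vee})$ is precisely the dual basis to $\{\phi_{r}(E(u)^{r_{i}}e_{i})\}$. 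For (c), independence of basis is most cleanly proved by giving a basis-free characterization: $\phi_{r}^{\star}$ is the unique $S$-semilinear map $\Fil^{r}M^{\star}\to M^{\star}$ such that for every $x\in\Fil^{r}M$ and $f\in\Fil^{r}M^{\star}$ one has $\phi_{r}^{\star}(f)(\phi_{r}(x))=\phi_{r}(f(x))$ (mod the appropriate identification using $\phi_{r}$ on $S$); uniqueness follows because the $\phi_{r}(x)$ generate $M$ by axiom~(3) of Definition~\ref{mod2}. This step is the main technical obstacle.

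\textbf{Step 3 (Double dual and anti-equivalence).} The tautological evaluation map $\mathrm{ev}:M\to (M^{\star})^{\star}$ is an $S$-linear isomorphism because $M$ is free of finite rank. On the basis $\{e_{i}\}$, the dual-of-dual integer sequence is $r-(r-r_{i})=r_{i}$, so $\mathrm{ev}$ preserves filtrations; the basis-free characterization in Step~2 then shows that it intertwines $\phi_{r}$ and $(\phi_{r}^{\star})^{\star}$. Hence $M\mapsto M^{\star}$ is an involution, and in particular an anti-equivalence on the subcategory of filtered-free objects. Functoriality (sending $f:M\to N$ to $f^{\star}:N^{\star}\to M^{\star}$) is standard and compatibility with $\Fil^{r}$ and $\phi_{r}^{\star}$ is immediate from the basis-free characterization.

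\textbf{Step 4 (Exactness).} Given a short exact sequence $0\to M'\to M\to M''\to 0$ in $\Mod_{[0,r]}^{\phi}$ of filtered-free modules, I would observe that on underlying $S$-modules it splits, and that the induced sequence on $\Fil^{r}$ is exact because both $\Fil^{r}M'$ and $\Fil^{r}M''$ are direct summands described by adapted bases. Applying $\Hom_{S}(-,S)$ yields exactness on underlying modules; the computed form of $\Fil^{r}(-)^{\star}$ from Step~1 shows exactness is preserved on filtrations, and compatibility with $\phi_{r}^{\star}$ follows from Step~2. Combined with the involution property of Step~3, this gives the exact anti-equivalence claimed.
\end{Proof}
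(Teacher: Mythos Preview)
The paper does not provide its own proof of this proposition; it is stated purely as a citation to Caruso (\cite{Caruso}, Proposition~2.2.5), with the surrounding remark that Caruso's weight-$1$ definition ``immediately generalizes to all filtered free modules.'' There is therefore nothing in the paper's text to compare your argument against.

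Your outline is a reasonable reconstruction of how such a result is established. The one place where more is needed than you indicate is Step~4: the claim that a short exact sequence $0\to M'\to M\to M''\to 0$ of filtered-free modules induces an exact sequence on $\Fil^{r}$ is not a formal consequence of each term having an adapted basis. What must actually be shown is that one can choose an adapted basis of $M$ extending one of $M'$ and projecting to one of $M''$; this is the substantive content of exactness in this category (and is where Caruso's argument, in the weight-$1$ case, does real work). Once that is in hand, your dualization and involution arguments go through.
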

We will need the following linear algebra lemma, which gives a compatibility between tensor product and dual module.

\begin{lem}\label{main} Let $M,M'$ be filtered-free modules of weight 1. There is a canonical isomorphism
\[\Hom_{\Mod_{[0,2]}^{\phi}}(\mathbf{1}_{1}[-1],M\otimes M')\simeq \Hom_{\Mod_{[0,1]}^{\phi}}(M^{\star},M').\]
\end{lem}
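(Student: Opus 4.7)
The plan is to reduce both sides of the isomorphism to subsets of the common $S$-module $\Hom_S(M^*, M') \simeq M \otimes_S M'$, and check that the defining conditions match up. The starting point is Lemma \ref{twist} (dropping the monodromy condition, since we are working in $\Mod^\phi_{[0,r]}$ without $N$), applied to the tensor product $M \otimes M'$, which is filtered-free of weight $2 \leq p-1$. This yields
\[
\Hom_{\Mod^\phi_{[0,2]}}(\mathbf{1}_1[-1], M \otimes M') \;\simeq\; \{e \in \Fil^1(M \otimes M') : \phi_1(e) = e\},
\]
where $\Fil^1$ and $\phi_1 = \phi_0/p$ are the natural structures on the weight-$2$ tensor product.

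Next I would pick adapted bases $\{e_i\}$ of $M$ with weights $w_i \in \{0,1\}$ and $\{f_j\}$ of $M'$ with weights $v_j \in \{0,1\}$. Writing $e = \sum_{i,j} a_{ij} (e_i \otimes f_j)$ with $a_{ij} \in S$, a direct computation using the formula $\Fil^n(M \otimes M') = \bigoplus \Fil^{\max(n - w_i - v_j, 0)} S \cdot (e_i \otimes f_j)$ shows that the condition $e \in \Fil^1(M \otimes M')$ is equivalent to $a_{ij} \in \Fil^1 S$ whenever $w_i = v_j = 0$. On the other hand, the dual $M^*$ has adapted basis $\{e_i^*\}$ with flipped weights $1 - w_i$; under the canonical $S$-linear identification $M \otimes_S M' \xrightarrow{\sim} \Hom_S(M^*, M')$ sending $e$ to the map $g_e: e_i^* \mapsto \sum_j a_{ij} f_j$, the condition $g_e(\Fil^1 M^*) \subset \Fil^1 M'$ unwinds (via the explicit description of $\Fil^1 M^*$) to precisely the same condition on the $a_{ij}$. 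So the filtration parts of both sides match.

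It remains to check that the Frobenius conditions agree: $\phi_1(e) = e$ in $M \otimes M'$ if and only if $g_e \circ \phi_1^* = \phi_1 \circ g_e$. This is the step I expect to be the main obstacle. The cleanest approach is to observe that $\phi_1 = \phi_0/p$ makes the identification $\phi$-equivariant once we interpret $\phi_0$ on both sides: on the left, $\phi_0 = \phi_{0,M} \otimes \phi_{0,M'}$, and on the right, $\phi_0$ is Caruso's dual Frobenius on $M^*$ from Definition following Lemma \ref{phi-basis}, combined with $\phi_{0,M'}$. Using the basis $\{\phi_1(E(u)^{r_i} e_i)\}$ of $M$ provided by Lemma \ref{phi-basis} and the dual basis to unwind both conditions, one sees that $\phi_1(e) = e$ and $g_e \circ \phi_1^* = \phi_1 \circ g_e$ translate to the same system of semilinear equations on the coefficients $a_{ij}$. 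The real content is that the canonical $S$-linear isomorphism $M \otimes M' \simeq \Hom_S(M^*, M')$ upgrades to an isomorphism of filtered $\phi$-modules, and once this is verified, the lemma follows as a Hom–tensor adjunction.

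The expected difficulty lies entirely in the bookkeeping for the $\sigma$-semilinear duality: the adapted basis $\{e_i\}$ and its image under $\phi_1$ interact with the dual basis of $M^*$ in a way that is easy to write down but tedious to verify. Since all the modules in play are filtered-free of weight $\leq p-1$ with $p \geq 5$ (by the global assumption $p > 3$ in Theorem \ref{BIG}), no exotic phenomena can appear and the verification reduces to linear algebra over $S$.
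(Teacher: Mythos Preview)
Your proposal is correct and follows essentially the same approach as the paper's proof. Both arguments reduce the left side via Lemma~\ref{twist} to $\{e\in\Fil^1(M\otimes M'):\phi_1(e)=e\}$, pick adapted bases, identify $M\otimes_S M'$ with $\Hom_S(M^\star,M')$ via the natural pairing, verify that the filtration conditions match (your condition $a_{ij}\in\Fil^1 S$ when $w_i=v_j=0$ agrees with the paper's, in their opposite weight convention), and then check Frobenius compatibility by comparing coefficients in the two bases $\{e_i\otimes w_j\}$ and $\{\phi_1(E(u)^{r_i}e_i)\otimes\phi_1(E(u)^{m_j}w_j)\}$; the paper carries out this last computation in full and produces the explicit coefficient table you allude to, whereas you (reasonably) leave it as bookkeeping.
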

\begin{proof} This is proved in the appendix. 

\end{proof}

\subsection{Torsion modules} In this subsection we discuss torsion Breuil modules. We will very soon focus on the special case of torsion modules which are of the form $\displaystyle M=\frac{\widetilde{M}}{p^{n}}$, where $n\geq 1$ is a positive integer and $\widetilde{M}$ is a strongly divisible module. For a positive integer $n\geq 1$, we denote by $S_{n}$  the ring $S/p^{n}S$. We start with the following definition (page 7 in \cite{Liu}).
\begin{defn}\label{torsion1} We consider the full subcategory $\Mod \FI^{\phi,N}_{[0,r]}$ of $'\Mod^{\phi,N}_{[0,r]}$, which has objects $(M,\Fil^{r}M,\phi_{r},N)$ such that
\begin{enumerate}\item As an $S$-module $M$ is isomorphic to $\displaystyle\bigoplus_{i\in I}\frac{S}{p^{n_{i}}S}$, where $I$ is a finite subset of $\mathbb{N}$ and $n_{i}$ are positive integers.
\item $\phi_{r}(\Fil^{r}M)$ generates $M$ as an $S$-module.
\end{enumerate}
\end{defn}
We denote by $\Mod \FI^{\phi}_{[0,r]}$ the category which forgets $N$ in the definition (\ref{torsion1}).

Let $(\widetilde{M},\Fil^{r}\widetilde{M},\tilde{\phi}_{r},\widetilde{N})$ be a strongly divisible module of weight $r$. For every $n\geq 1$, $\widetilde{M}$ induces an object $(M,\Fil^{r}M,\phi_{r},N)$ of $\Mod \FI^{\phi,N}_{[0,r]}$, by setting $M=\widetilde{M}/p^{n}\widetilde{M}$, $\Fil^{r}M=\Fil^{r}\widetilde{M}/p^{n}\Fil^{r}\widetilde{M}$ and $\phi_{r}, N$ the maps induced by $\tilde{\phi},\widetilde{N}$.
\begin{exmp} Let $n\geq 1$. We denote by $\mathbf{1}_{1}^{n}$ the weight one module $(S_{n},\Fil^{1}(S_{n}),\phi_{1},N)$. Moreover, we denote by $\mathbf{1}_{1}^{n}[-1]$ the torsion module that has $S_{n}$ as the underlying $S$-module, $\Fil^{1}(\mathbf{1}_{1}^{n}[-1])=S_{n}$, $\phi_{1}(1)=1$ and $N=0$.
\end{exmp}

\subsection*{Relation to finite flat group schemes} We consider the category $\Mod\FI_{1}^{\phi}$, which is the category of torsion modules of weight exactly one. It is a very important theorem of Breuil (\cite{Bre1}) that this category is anti-equivalent to the category of commutative finite flat group schemes over the ring of integers $\mathcal{O}_{K}$ of $K$. In particular, we review the following theorem.
\begin{thm}\label{Breuil}(\cite{Bre1}) There is an anti-equivalence of categories between $\Mod\FI_{1}^{\phi}$ and the category of commutative finite flat group schemes $G$ over $\mathcal{O}_{K}$ such that $G=\ker(p^{n}_{G})$ for some $n\geq 1$ and $\ker(p^{n}_{G})$ is flat over $\mathcal{O}_{K}$ for all $n\geq 1$ ($p^{n}_{G}$ is the multiplication by $p^{n}$ on $G$).
\end{thm}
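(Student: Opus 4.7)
The plan is to build the anti-equivalence in two stages: first establish it for $p$-divisible groups over $\mathcal{O}_K$, then descend to finite flat group schemes via dévissage.

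For the first stage, given a $p$-divisible group $\mathcal{G}$ over $\mathcal{O}_K$, I would define $M(\mathcal{G})$ to be the evaluation of the contravariant crystalline Dieudonné crystal of $\mathcal{G}_1 := \mathcal{G} \times_{\mathcal{O}_K} k$ on the PD-thickening $S \twoheadrightarrow \mathcal{O}_K$ sending $u \mapsto \pi$. The structures of a weight-one strongly divisible module then arise naturally: the Frobenius $\phi$ comes from the crystalline Frobenius, the Hodge filtration $\Fil^{1} M(\mathcal{G}) = \ker(M(\mathcal{G}) \twoheadrightarrow \mathrm{Lie}(\mathcal{G})^{*})$ comes from Grothendieck--Messing, and the monodromy operator $N$ is determined, using Griffiths transversality and the commutativity with $\phi$, by the derivation $N$ on $S$. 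One then verifies the axioms of Definition 5.5 --- strong divisibility, the generation property for $\phi_{1}(\Fil^{1}M)$, and the compatibility $cN\phi_{1}=\phi_{1}\circ E(u)N$.

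Next, show that $M$ is an anti-equivalence with $\Mod^{\phi,N}_{[0,1]}$. Full faithfulness reduces to the corresponding property for $T_{st}^{\star}$: the composition $T_{st}^{\star} \circ M$ recovers the Tate module with its Galois action, and by Tate's theorem on $p$-divisible groups over $\mathcal{O}_{K}$, a $p$-divisible group is determined by the Galois representation on its Tate module. Essential surjectivity is the hard direction: given a strongly divisible module $\widetilde{M}$ of weight one, reconstruct the $p$-divisible group. This can be done either by Breuil's inductive construction using the period ring $\widehat{A}_{st}$, or by reducing to Kisin's classification by $(\phi,\mathfrak{S})$-modules after inverting $u$.

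For the second stage, let $G$ be a finite flat group scheme over $\mathcal{O}_{K}$ with $G = \ker(p^{n}_{G})$ and all $\ker(p^{m}_{G})$ flat. Using the results of Berthelot--Breen--Messing, embed $G$ as the kernel of an isogeny $\mathcal{G}_{1} \to \mathcal{G}_{2}$ of $p$-divisible groups over $\mathcal{O}_{K}$. Define $M(G) := \mathrm{coker}(M(\mathcal{G}_{2}) \to M(\mathcal{G}_{1}))$; since both $M(\mathcal{G}_{i})$ are free of the same rank over $S$ and the cokernel on the underlying modules is of finite length, one obtains a torsion $S$-module of the form $\bigoplus_{i\in I} S/p^{n_{i}}S$ satisfying the generation condition on $\phi_{1}(\Fil^{1})$. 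Inheriting the filtration, $\phi_{1}$, and $N$ from the free case produces an object of $\Mod\FI^{\phi,N}_{[0,1]}$, and in particular of $\Mod\FI^{\phi}_{1}$ after forgetting $N$. Independence of the choice of resolution follows from full faithfulness in stage one; essential surjectivity is obtained by lifting a torsion Breuil module to a short exact sequence of strongly divisible modules, applying the $p$-divisible equivalence, and taking the kernel.

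The main obstacle is essential surjectivity at the level of $p$-divisible groups --- reconstructing a $p$-divisible group from a given strongly divisible module of weight $\leq 1$. This is the core $p$-adic Hodge theoretic content and is precisely where the hypothesis $p > 2$ enters Breuil's original argument, since the compatibility of the Frobenius with the filtration needs $\Fil^{1}S$ to have small divided-power behaviour. A secondary subtlety is verifying that $N$ is well-defined and intrinsic: although $S$ depends on the choice of $\pi$, the resulting object of $\Mod^{\phi,N}_{[0,1]}$ should not, and this requires a careful tracking of how $N$ interacts with a change of uniformizer via the connection on the crystalline Dieudonné crystal.
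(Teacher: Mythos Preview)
The paper does not give its own proof of this theorem: it is quoted as a result of Breuil, with the citation \cite{Bre1}, and is used as a black box thereafter. So there is nothing in the paper to compare your proposal against.

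That said, your outline inverts Breuil's original strategy. Breuil works from the bottom up: he constructs the functor $\Mod$ directly on finite flat group schemes via a crystalline-syntomic computation, proves the anti-equivalence first for group schemes killed by $p$ by an explicit analysis, and then extends to $p^{n}$-torsion by d\'evissage in the torsion category. The strongly divisible module attached to a $p$-divisible group is then \emph{defined} as $\varprojlim_{n}\Mod(\mathcal{G}[p^{n}])$ --- this is exactly how the present paper recalls it in the proof of Proposition~\ref{H1} --- rather than obtained first and used to manufacture the finite-level equivalence.

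Your top-down route (Dieudonn\'e crystals for $p$-divisible groups, then isogeny kernels for the finite case) is closer in spirit to Kisin's later reformulation and is in principle viable, but two steps are not justified as written. First, the claim that an arbitrary $G$ satisfying the hypotheses embeds as the kernel of an isogeny of $p$-divisible groups over $\mathcal{O}_{K}$ is not a consequence of Berthelot--Breen--Messing; it is a nontrivial input that in the literature is typically deduced \emph{from} Breuil's or Kisin's classification rather than used to prove it, so you risk circularity. Second, appealing to Kisin's $(\phi,\mathfrak{S})$-modules for essential surjectivity at the $p$-divisible level is anachronistic relative to \cite{Bre1}; Breuil's own essential surjectivity argument is a direct construction at the finite level.
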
 We denote by $\Mod$ the functor which sends a finite flat group scheme as in theorem (\ref{Breuil}) to its corresponding torsion module and by $\Gp$ its weak inverse, which sends a torsion module of weight one to its corresponding group scheme.
\begin{exmp} For the $p^{n}$ torsion module $\mathbf{1}^{n}_{1}$ we have an isomorphism $\Gp(\mathbf{1}^{n}_{1})\simeq(\Z/p^{n})_{\mathcal{O}_{K}}$, while for the dual module $\mathbf{1}^{n}_{1}[-1]$ we have
$\Gp(\mathbf{1}^{n}_{1}[-1])\simeq(\mu_{p^{n}})_{\mathcal{O}_{K}}$.
\end{exmp}
For weight one torsion modules, Caruso (\cite{Caruso}) extended the definition of a dual module.
The previous example suggests that the functors $\Mod$ and $\Gp$ commute with taking dual objects. This is indeed true due to the following theorem of Caruso (\cite{Caruso}, section 3).
\begin{thm}\label{dual}(\cite{Caruso}) Let $\mathcal{G}$ be a commutative finite flat group scheme satisfying the assumption of theorem (\ref{Breuil}) and $\mathcal{G}^{\star}$ be the dual group scheme. There is a canonical isomorphism
$\Mod(\mathcal{G}^{\star})\simeq(\Mod(\mathcal{G}))^{\star}$.
\end{thm}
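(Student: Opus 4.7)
The plan is to prove the isomorphism by producing the map from the canonical Cartier duality pairing and then verifying it is an isomorphism by a dévissage to the $p$-torsion case. Concretely, the starting point is the evaluation pairing
\[
\mathcal{G}\times\mathcal{G}^{\star}\longrightarrow\mu_{p^{n}},
\]
which expresses the defining property of Cartier duality. Applying the contravariant functor $\Mod$ and using the example $\Mod(\mu_{p^{n}})\simeq\mathbf{1}_{1}^{n}[-1]$, this pairing should translate (after checking that $\Mod$ is compatible with the product of group schemes on the source, i.e.\ sends it to a tensor product of Breuil modules in the sense analogous to Definition \ref{tensor}) into a pairing
\[
\Mod(\mathcal{G})\otimes\Mod(\mathcal{G}^{\star})\longrightarrow\mathbf{1}_{1}^{n}[-1],
\]
of torsion Breuil modules. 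By the tensor-hom adjunction on $S_n$-modules respecting $\Fil^{1}$ and $\phi_{1}$, this is the same as a canonical morphism
\[
\alpha_{\mathcal{G}}:\Mod(\mathcal{G}^{\star})\longrightarrow(\Mod(\mathcal{G}))^{\star},
\]
which is the candidate isomorphism.

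To show $\alpha_{\mathcal{G}}$ is an isomorphism, I would first check it in the simplest case $\mathcal{G}=(\Z/p^{n})_{\mathcal{O}_{K}}$, where $\mathcal{G}^{\star}=(\mu_{p^{n}})_{\mathcal{O}_{K}}$; here both sides become $\mathbf{1}_{1}^{n}[-1]$ and one checks directly from the definitions that $\alpha_{\mathcal{G}}$ is the identity. Next, I would use naturality: since $\Mod$ is an anti-equivalence of categories and duality of finite flat group schemes is exact, every short exact sequence $0\to\mathcal{G}'\to\mathcal{G}\to\mathcal{G}''\to0$ in the category of Theorem \ref{Breuil} yields an exact sequence of torsion Breuil modules and similarly after taking Cartier or module-theoretic duals. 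The five lemma applied to the induced commutative diagram then reduces the problem from general $\mathcal{G}$ to the case where $\mathcal{G}$ is killed by $p$, since any such group scheme is built out of $p$-torsion ones by extensions.

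In the $p$-torsion case, both $\Mod(\mathcal{G})$ and $\Mod(\mathcal{G}^{\star})$ are free modules over the simpler ring $S_{1}=S/p$, and one may further reduce to the ``simple'' rank one cases over $S_{1}$. Choosing an adapted basis $\{e_{i}\}$ of $\Mod(\mathcal{G})$ with integer sequence $\{r_{1}\leq\cdots\leq r_{d}\}\subset\{0,1\}$, the dual basis $\{e_{i}^{\star}\}$ provides an adapted basis of $(\Mod(\mathcal{G}))^{\star}$ with the complementary sequence $\{1-r_{i}\}$, and $\phi_{1}^{\star}$ is determined by the formula in the definition of the dual module. One then checks that under $\alpha_{\mathcal{G}}$, the natural generators of $\Mod(\mathcal{G}^{\star})$ constructed from the pairing map precisely to this dual basis, preserving $\Fil^{1}$ and commuting with $\phi_{1}$.

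The main obstacle is the compatibility of $\Mod$ with tensor/product structures needed to make sense of the pairing on the module side. More precisely, one must verify that the ``multiplicative'' structure coming from the Hopf-algebra comultiplication on $\mathcal{G}^{\star}$ matches the $\phi_{1}^{\star}$ in the dual of $\Mod(\mathcal{G})$; this requires a careful tracking of how Breuil's functor interacts with the crystalline/PD constructions on both sides, and it is the technical heart of the argument. Once this compatibility is set up, involutivity $(M^{\star})^{\star}=M$ on Breuil modules, together with the reduction steps above, forces $\alpha_{\mathcal{G}}$ to be an isomorphism for all $\mathcal{G}$ in the category of Theorem \ref{Breuil}.
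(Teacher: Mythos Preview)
The paper does not give its own proof of this statement: it is quoted verbatim as a theorem of Caruso (\cite{Caruso}, section 3) and used as a black box. So there is no ``paper's proof'' to compare against.

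That said, your proposed route has a real gap. The Cartier pairing $\mathcal{G}\times\mathcal{G}^{\star}\to\mu_{p^{n}}$ is a \emph{biadditive} morphism of schemes, not a homomorphism of group schemes out of the direct product. The anti-equivalence $\Mod$ is only a functor on group schemes and their homomorphisms, so you cannot simply ``apply $\Mod$'' to this pairing. If you try to view the source as the group scheme $\mathcal{G}\times\mathcal{G}^{\star}$, then $\Mod(\mathcal{G}\times\mathcal{G}^{\star})\simeq\Mod(\mathcal{G})\oplus\Mod(\mathcal{G}^{\star})$ (additivity), not a tensor product; and the pairing is not a homomorphism from that product anyway. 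The only group-theoretic reformulation of Cartier duality is the tautological isomorphism $\mathcal{G}^{\star}\xrightarrow{\sim}\underline{\Hom}(\mathcal{G},\mu_{p^{n}})$; transporting \emph{that} through $\Mod$ already presupposes that $\Mod$ takes internal Homs into $\mu_{p^{n}}$ to module-theoretic duals, which is exactly the statement you are trying to prove. You flag this compatibility as ``the main obstacle'' and ``the technical heart,'' but you do not indicate how to establish it, and without it the construction of $\alpha_{\mathcal{G}}$ never gets off the ground.

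Caruso's actual argument does not attempt to push the pairing through $\Mod$. He works inside the Breuil--Kisin/crystalline description of $\Mod$ and constructs the dual directly on the linear-algebra side (this is where the explicit formula for $\phi_{1}^{\star}$ in the definition of $M^{\star}$ comes from), then checks that the associated group scheme is the Cartier dual by identifying the two evaluation pairings at the level of Dieudonn\'e-type data. Your d\'evissage and five-lemma steps would be fine \emph{once} a natural transformation $\alpha_{\mathcal{G}}$ exists, but producing $\alpha_{\mathcal{G}}$ is precisely the content of the theorem, and your outline does not supply it.
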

Using this result and the analogue of lemma (\ref{main}) for torsion modules, we obtain the following corollary.

\begin{cor}\label{iso2} Let $\widetilde{M},\;\widetilde{M}'$ be two strongly divisible modules of weight one. Let $n\geq 1$ be a positive integer. We consider the torsion modules $\displaystyle M=\frac{\widetilde{M}}{p^{n}}$ and $\displaystyle M'=\frac{\widetilde{M}'}{p^{n}}$. Then we have a canonical isomorphism
\[\Hom_{\Mod_{[0,2]}^{\phi}}(\mathbf{1}^{n}_{1}[-1],M\otimes M')\simeq\Hom_{ffgps/\mathcal{O}_{K}}(\Gp(M'),\Gp(M)^{\star}).\]
\end{cor}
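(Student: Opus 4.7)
The strategy is to combine a torsion analogue of Lemma~\ref{main} with Breuil's anti-equivalence and Caruso's duality compatibility, reducing the computation of a $\Hom$-group in the category of Breuil modules of weight two to a $\Hom$-group of finite flat group schemes.

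First, I would establish the torsion analogue of Lemma~\ref{main}, namely a canonical isomorphism
\[
\Hom_{\Mod\FI^{\phi}_{[0,2]}}(\mathbf{1}^{n}_{1}[-1], M \otimes M') \simeq \Hom_{\Mod\FI^{\phi}_{[0,1]}}(M^{\star}, M').
\]
Here the tensor product on the left is to be interpreted as the reduction of the tensor product of the chosen lifts, $M \otimes M' := (\widetilde{M} \otimes_{S} \widetilde{M}')/p^{n}$; this makes sense because strongly divisible modules of weight one are automatically filtered free, so Definition~\ref{tensor} produces a filtered free weight-two object over $S$ (this is the place requiring $p\geq 3$, so that $1+1\leq p-1$). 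The dual $M^{\star}$ is Caruso's extension of the dual functor to torsion modules. The isomorphism itself I would obtain by reducing Lemma~\ref{main} modulo $p^{n}$: for $\widetilde{M}, \widetilde{M}'$ strongly divisible the underlying $S$-modules are free, so the canonical bijection from Lemma~\ref{main} identifies both sides with the same set of adapted-basis data (an $e\in \Fil^{2}(\widetilde{M}\otimes \widetilde{M}')$ with $\phi_{2}(e)=e$ on one side; a $\Fil^{1}$- and $\phi_{1}$-preserving map $\widetilde{M}^{\star}\to\widetilde{M}'$ on the other), and this identification descends to the quotient by $p^{n}$. Alternatively, one may simply mimic the appendix proof of Lemma~\ref{main} for torsion modules, since that argument only uses the filtered-free structure and the dual, both of which Caruso has made available in the torsion setting.

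Next I would apply Breuil's anti-equivalence (Theorem~\ref{Breuil}) to the right-hand side. Since $\Gp$ is a contravariant equivalence from $\Mod\FI^{\phi}_{[0,1]}$ onto the category of finite flat group schemes appearing in Theorem~\ref{Breuil}, we obtain
\[
\Hom_{\Mod\FI^{\phi}_{[0,1]}}(M^{\star}, M') \simeq \Hom_{ffgps/\mathcal{O}_{K}}(\Gp(M'), \Gp(M^{\star})).
\]
Finally, applying Caruso's duality compatibility (Theorem~\ref{dual}) to identify $\Gp(M^{\star})$ with $\Gp(M)^{\star}$, and chaining the three isomorphisms, yields the claim of the corollary.

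The main obstacle, and essentially the only non-formal step, is the torsion analogue of Lemma~\ref{main}. One must verify that the tensor product $M \otimes M'$ really lies in $\Mod\FI^{\phi}_{[0,2]}$—in particular that $\phi_{2}(\Fil^{2}(M\otimes M'))$ generates $M\otimes M'$ over $S$, and that the filtration is stable under multiplication by $\Fil^{2}S$—and that Caruso's torsion dual is compatible with the natural pairing $M^{\star}\otimes M\to \mathbf{1}^{n}_{1}[-1]$ that implicitly underlies the identification in Lemma~\ref{main}. Once these checks are in place, the remaining steps are direct applications of Theorems~\ref{Breuil} and~\ref{dual}.
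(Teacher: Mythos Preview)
Your proposal is correct and follows essentially the same route as the paper: establish a torsion analogue of Lemma~\ref{main}, then apply Breuil's anti-equivalence (Theorem~\ref{Breuil}) and Caruso's duality compatibility (Theorem~\ref{dual}). The paper likewise says that step one is carried out by reproducing the appendix proof of Lemma~\ref{main} in the torsion setting, lifting $e$ to $\Fil^{1}\widetilde{M}$ and noting that the coefficient table~(\ref{table}) then holds modulo $p^{n}$.

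One small slip: via Lemma~\ref{twist}, the element corresponding to a map $\mathbf{1}^{n}_{1}[-1]\to M\otimes M'$ lives in $\Fil^{1}(M\otimes M')$ and is fixed by $\phi_{1}$, not in $\Fil^{2}$ fixed by $\phi_{2}$; the generator of $\mathbf{1}_{1}[-1]$ sits in filtration level~$1$, and this is what the appendix computation uses throughout.
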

\begin{proof}  The first step is to imitate lemma (\ref{main}) and prove an isomorphism
\begin{eqnarray}F:\Hom_{\Mod_{[0,2]}^{\phi}}(\mathbf{1}^{n}_{1}[-1],M\otimes M')\stackrel{\simeq}{\longrightarrow}\Hom_{\Mod_{[0,1]}^{\phi}}(M^{\star},M').
\end{eqnarray}
If we have this, then the result follows by the anti-equivalence of categories as follows.
\begin{eqnarray*}\Hom_{\Mod_{[0,2]}^{\phi}}(\mathbf{1}^{n}_{1}[-1],M\otimes M')\simeq&&\Hom_{\Mod_{[0,1]}^{\phi}}(M^{\star},M')\\
\simeq&&\Hom_{ffgps/\mathcal{O}_{K}}(\Gp(M'),\Gp(M^{\star}))\\ \simeq&&\Hom_{ffgps/\mathcal{O}_{K}}(\Gp(M'),\Gp(M)^{\star}).
\end{eqnarray*} To prove the isomorphism \ref{iso2}, we can reproduce the proof of lemma \ref{main} in the torsion setting. For example the hard step was to show that for a section $e\in\Fil^{1}(M\otimes M')$ with $\phi_{1}(e)=e$, the morphism $F_{e}:M^{\star}\rightarrow M'$ commutes with the Frobenius. Since $\Fil^{1}M=\Fil^{1}\widetilde{M}/p^{n}\Fil^{1}\widetilde{M}$, we may assume there is an element $\tilde{e}\in\Fil^{1}\widetilde{M}$ whose reduction modulo $p^{n}$ coincides with $e$. Then the proof for the strongly divisible module case carries over, with only difference that if we have two expressions for $e$ as follows, $e=\sum_{i,j}s_{ij}e_{i}\otimes w_{j}$ and $e=\sum_{i,j}t_{ij}x_{i}\otimes y_{j}$ for some $s_{ij},t_{ij}\in S$, then $t_{ij}-s_{ij}$ is equal to the table \ref{table} modulo $p^{n}$.

\end{proof}
\vspace{1pt}
\section{Integral Crystalline cohomology} In this section we want to pass from syntomic cohomology to crystalline. The main reference for this section is \cite{Fal}. For a smooth scheme $\mathfrak{X}$ over $\Spec (\mathcal{O}_{K})$, we consider the crystalline cohomology groups of $\mathfrak{X}$ relative to the ring $S=W[[u]]^{PD}$, which is the ring defined in section 5. We will denote these groups by $H^{i}_{Br}(\mathfrak{X}/S)$. The definition is similar to \cite{Berthelot1974}, Ch.III, 1 and uses infinitesimal thickenings $\mathcal{U}$ of open subsets $U\subset X$ such that $\mathcal{U}$ is a scheme over $S$ and the ideal defining $U$ in $\mathcal{U}$ has a nilpotent PD-structure compatible with that on $\Fil^1 S$ and $(p)$. For more details we refer to \cite{Fal}, section 2, page 2. 

The groups $H^{i}_{Br}(\mathfrak{X}/S)$  can be computed in the derived category using de Rham complexes as follows.
Suppose a global embedding $\mathfrak{X}\hookrightarrow \mathcal{Z}$ exists, where $\mathcal{Z}$ is smooth over $\Spec(S)$ (this is true when $\mathfrak{X}$ is an abelian scheme, which is the case of interest to us). We consider the de Rham complex
\[\mathbb{O}_{\mathfrak{X}\subset \mathcal{Z}}:=\mathcal{O}_{\mathfrak{X}\subset \mathcal{Z}}\otimes_{\mathcal{O}_{\mathcal{Z}}}\Omega_{\mathcal{Z}/S}^{\bullet},\] where $\mathcal{O}_{\mathfrak{X}\subset \mathcal{Z}}$ is the completed PD-envelope of $\mathfrak{X}$ in $\mathcal{Z}$. Then we have an isomorphism
\[H^{i}_{Br}(\mathfrak{X}/S)\simeq H^{i}(\mathfrak{X}_{et},\mathbb{O}_{\mathfrak{X}\subset \mathcal{Z}}).\]
For $r\leq p-1$ we consider the complex
$\mathbb{J}_{\mathfrak{X}\subset \mathcal{Z}}^{[r]}:=\mathcal{J}^{[r-\bullet]}_{\mathfrak{X}\subset \mathcal{Z}}\otimes_{\mathcal{O}_{\mathcal{Z}}}\Omega^{\bullet}_{\mathcal{Z}/S},$ where $\mathcal{J}^{[1]}_{\mathfrak{X}\subset \mathcal{Z}}$ is the PD-ideal generated by the kernel of the surjection $\mathcal{O}_{\mathcal{Z}}\twoheadrightarrow\mathcal{O}_{\mathfrak{X}}$. Next, the Frobenius $\phi$ on $S$ induces a Frobenius, $\phi^S$ on $\mathcal{Z}$. We therefore get a Frobenius, which we will denote again by $\phi^S$, on the PD-envelope $\mathcal{O}_{X\subset \mathcal{Z}}$ defined similarly to $\phi:S\rightarrow S$, by sending the constants $c_i$ to $\phi^S(c_i)$ and sending $f\in\ker[\mathcal{O}_{\mathcal{Z}}\twoheadrightarrow\mathcal{O}_{\mathfrak{X}}]$ to $f^p$. 

If we reduce the complex $\mathbb{O}_{\mathfrak{X}\subset \mathcal{Z}}$ modulo the augmentation ideal $\Fil^{1}S$ of the ring $S$ (this means we consider the derived tensor product with $S/\Fil^{1}S$), we obtain the Hodge cohomology groups, $H^{i}(\mathfrak{X}_{et},\mathcal{O}_{\mathfrak{X}/S}^{cris}
\otimes\Omega^{\bullet}_{\mathfrak{X}/\mathcal{O}_{K}})$, of $\mathfrak{X}$.
We make the following assumption on $\mathfrak{X}$.
\begin{ass}\label{assumption} The Hodge cohomology groups of $\mathfrak{X}$ are torsion free over $\mathcal{O}_{K}$ and the Hodge spectral sequence degenerates.
\end{ass}
If $\mathfrak{X}$ is smooth proper over $\mathcal{O}_{K}$ and the assumption (\ref{assumption}) is satisfied, then the following properties are known for the crystalline cohomology groups (\cite{Fal}, theorem 1).
\begin{enumerate}\item For $0\leq i\leq r\leq p-2$, the quadruple $(H^{i}_{Br}(\mathfrak{X}/S), H^{i}(\mathfrak{X}_{et},\mathbb{J}_{\mathfrak{X}\subset Z_{1}}^{[r]},\phi_{r},N)$ defines a strongly divisible module over $S$ of weight $r$ (see rem. \ref{conne} for the definition of the derivation $N$).
\item If we reduce modulo $p^{n}$, for $n\geq 1$, we obtain an isomorphism
\[H^{i}_{Br}(\mathfrak{X}/S)/p^{n}\simeq H^{i}(\mathfrak{X}_{n},\mathbb{O}_{\mathfrak{X}\subset Z_{1}}\otimes^{\mathbb{L}}\Z/p^{n}):=
H^{i}_{Br}(\mathfrak{X}_{n}/S_{n}).\]
\end{enumerate}
\begin{conv} Let $i<p-1$. From now on we identify the crystalline cohomology group $H^{i}_{Br}(\mathfrak{X}/S)$ with the Breuil module $(H^{i}_{Br}(\mathfrak{X}/S),\Fil^{i}H^{i}_{Br}(\mathfrak{X}/S),\phi_{i},N)\in
\Mod^{\phi, N}_{[0,i]}$.
\end{conv}
\begin{rem}\label{conne} We briefly describe how the derivation $N$ is obtained. For simplicity we assume that a global embedding $\mathfrak{X}\hookrightarrow\mathcal{Z}$ exists. There is a short exact sequence of complexes, \[0\rightarrow\Omega^{\bullet-1}_{\mathcal{Z}/S}\otimes\Omega_{S/W}^{1}\rightarrow\Omega^{\bullet}_{\mathcal{Z}/W}
\rightarrow\Omega^{\bullet}_{\mathcal{Z}/S}\rightarrow 0.\] It is well known that $\displaystyle\Omega^{1}_{S/W}\simeq S\frac{du}{u}$.   The above sequence induces a connecting homomorphism
\[H^{i}_{Br}(\mathfrak{X}/S)\rightarrow H^{i+1}(\mathfrak{X},\mathcal{O}_{\mathfrak{X}\subset\mathcal{Z}}\Omega^{\bullet-1}_{\mathcal{Z}/S}\otimes_{S}\Omega_{S/W}^{1})
\simeq H^{i}_{Br}(\mathfrak{X}/S)\otimes_{S} \frac{du}{u}.\] This map gives a connection $\nabla$ and hence a derivation $N$ with the required properties.
\end{rem}

\subsection{The crystalline cohomology of an abelian scheme} From now on we focus on the case when $\mathcal{A}$ is an abelian scheme over $\Spec(\mathcal{O}_{K})$. We note that the assumption (\ref{assumption}) is satisfied (prop. 2.5.2 in \cite{Berthelot/Breen/Messing1982}). But in this case we can in fact say more about the structure of the groups $H^{i}_{Br}(\mathcal{A}/S)$.

\subsection*{Computations for $H^{1}$} We first consider the weight one torsion module $H^{1}_{Br}(\mathcal{A}_{n}/S_{n})$. The following proposition highlights the relation between crystalline cohomology and finite flat group schemes associated to $p$-divisible groups.
\begin{prop}\label{H1} For every $n\geq 1$, there is a canonical isomorphism of Breuil modules \[H^{1}_{Br}(\mathcal{A}_{n}/S_{n})\simeq\Mod(\mathcal{A}[p^{n}]).\]
\end{prop}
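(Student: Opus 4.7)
The plan is to realize both sides of the desired isomorphism as the reduction modulo $p^n$ of a single strongly divisible module of weight one attached to the $p$-divisible group $\mathcal{A}[p^\infty]$. First, I would note that Breuil's anti-equivalence of Theorem \ref{Breuil} extends to $p$-divisible groups by passing to the inverse limit: setting $\mathbb{M}(\mathcal{A}[p^\infty]) := \varprojlim_n \Mod(\mathcal{A}[p^n])$ produces a strongly divisible module of weight one over $S$, free of rank $2d$, whose reduction modulo $p^n$ recovers $\Mod(\mathcal{A}[p^n])$. This compatibility is immediate from the short exact sequence $0 \to \mathcal{A}[p^n] \to \mathcal{A}[p^\infty] \xrightarrow{p^n} \mathcal{A}[p^\infty] \to 0$ together with the exactness of Breuil's functor.

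The heart of the argument is the construction of a canonical comparison isomorphism
\[
H^1_{Br}(\mathcal{A}/S) \;\simeq\; \mathbb{M}(\mathcal{A}[p^\infty])
\]
of strongly divisible modules. On the crystalline side, the Berthelot--Breen--Messing theory identifies the first crystalline cohomology of an abelian scheme with (the dual of) the Dieudonn\'e crystal of its $p$-divisible group, evaluated on the PD-thickening $S \twoheadrightarrow \mathcal{O}_K$. On the Breuil side, Breuil's module functor is by construction exactly this evaluation, enriched with the Hodge filtration coming from $\Fil^1 S \cdot M + (\text{Hodge filtration in degree }1)$, with the divided Frobenius $\phi_1$, and with the connection induced by the logarithmic derivation $N$ on $S$ (cf. Remark \ref{conne}). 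Checking that these extra structures match on both sides --- filtration via the conjugate spectral sequence / Hodge filtration on $H^1_{dR}(\mathcal{A}/\mathcal{O}_K)$, Frobenius via the crystalline Frobenius lifted through the PD-envelope, and monodromy via the Gauss--Manin connection --- produces the desired isomorphism in the category $\Mod^{\phi,N}_{[0,1]}$.

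Finally, reducing this comparison modulo $p^n$ and invoking property (2) of the crystalline cohomology of $\mathfrak{X}$ stated after Assumption \ref{assumption}, namely $H^1_{Br}(\mathcal{A}/S)/p^n \simeq H^1_{Br}(\mathcal{A}_n/S_n)$, yields
\[
H^1_{Br}(\mathcal{A}_n/S_n) \;\simeq\; H^1_{Br}(\mathcal{A}/S)/p^n \;\simeq\; \mathbb{M}(\mathcal{A}[p^\infty])/p^n \;\simeq\; \Mod(\mathcal{A}[p^n]),
\]
which is the statement of the proposition. The main obstacle is the middle step: although the underlying $S$-module comparison is classical (going back to Grothendieck--Messing--Mazur and made integral by Faltings), verifying compatibility with all three of the filtration, the divided Frobenius $\phi_1$, and the monodromy operator $N$ simultaneously requires some care, since the Breuil module structure records precisely the data needed to reconstruct the $p$-divisible group and any mismatch would destroy the equivalence with $\Mod(\mathcal{A}[p^n])$.
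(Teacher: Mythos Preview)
Your overall strategy matches the paper's: establish the integral comparison $H^1_{Br}(\mathcal{A}/S) \simeq \varprojlim_n \Mod(\mathcal{A}[p^n])$ of strongly divisible modules, then reduce modulo $p^n$ using $H^1_{Br}(\mathcal{A}/S)/p^n \simeq H^1_{Br}(\mathcal{A}_n/S_n)$ and $M(H)/p^n \simeq \Mod(\mathcal{A}[p^n])$. The endpoint reductions are handled identically.

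The difference is in how the integral comparison is obtained. You propose a direct identification: Berthelot--Breen--Messing gives $H^1_{Br}(\mathcal{A}/S)$ as the Dieudonn\'e crystal of $\mathcal{A}[p^\infty]$ evaluated at the PD-thickening $S \twoheadrightarrow \mathcal{O}_K$, and you assert that Breuil's functor $\Mod$ is ``by construction exactly this evaluation'', so that it only remains to match filtration, $\phi_1$, and $N$. The paper takes an indirect route via Galois representations instead. It introduces Faltings' module $M'(H)$ (which is essentially the crystal evaluation you describe) alongside Breuil's $M(H) = \varprojlim \Mod(\mathcal{A}[p^n])$, and identifies $M(H) \simeq M'(H) \simeq H^1_{Br}(\mathcal{A}/S)$ by showing that all three have the same image under $T_{st}^\star$ --- namely the Tate module $T_p(H)$ --- and then invoking the full faithfulness of $T_{st}^\star$ on filtered-free modules (\cite{Fal}, Theorem~5). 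This frees the paper from checking the compatibilities you flag as the ``main obstacle''. Your claim that Breuil's $\Mod$ is literally the crystal evaluation is precisely the nontrivial identification $M(H) \simeq M'(H)$; it is not immediate from Breuil's original construction, and the paper's Galois-side argument is exactly what circumvents proving it directly. For the monodromy the paper also shortcuts: rather than matching Gauss--Manin with Breuil's $N$, it observes that in weight one the operator $N$ is uniquely determined by the condition $N(M) \subset \sum_{i\geq 1} \frac{u^i}{q(i)!} M$ (\cite{Bre1}, prop.~5.1.3, and the remark after lemma~3.2.1 in \cite{Bre2}), so any isomorphism in $\Mod^{\phi}_{[0,1]}$ is automatically one in $\Mod^{\phi,N}_{[0,1]}$.
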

\begin{proof} Let $H=\lim\limits_{\longrightarrow}\mathcal{A}[p^{n}]$ be the $p$-divisible group over $\mathcal{O}_{K}$ associated to the abelian scheme $\mathcal{A}$. By \cite{Bre1}, theorem 4.2.2.9, there is an associated strongly divisible module, defined as,
\[M(H)=(\lim\limits_{\longleftarrow}\Mod(\mathcal{A}[p^{n}]),\lim\limits_{\longleftarrow}
\Fil^{1}\Mod(\mathcal{A}[p^{n}]),\lim\limits_{\longleftarrow}\phi_{1}).\]
On the other hand, Faltings in section 6 of \cite{Fal} associates another strongly divisible module $M'(H)$ to the $p$-divisible group $H$. We claim that there is a canonical isomorphism $M(H)\simeq M'(H)$. This follows by the isomorphism, $T_{st}^{\star}(M(H))\simeq T_{st}^{\star}(M'(H))$, of the corresponding  Galois representations. In fact, both are isomorphic to the Tate module $T_{p}(H)$ of $H$. The isomorphism $T_{st}^{\star}(M(H))\simeq T_{p}(H)$ is proved in lemma 5.3.1 of \cite{Bre1}, while $T_{st}^{\star}(M'(H))\simeq T_{p}(H)$ is proved in theorem 7 of \cite{Fal}. Notice that the functor $T_{st}^{\star}$ becomes fully faithful when restricted to the filtered free modules (\cite{Fal} theorem 5).

Next we consider the strongly divisible module $H^{1}_{Br}(\mathcal{A}/S)$. Faltings showed (\cite{Fal}, page 132) that the Galois representations $T_{st}^{\star}(H)$ and $T_{st}^{\star}(H^{1}_{Br}(\mathcal{A}/S))$ coincide. We therefore obtain an isomorphism of strongly divisible modules
\[H^{1}_{Br}(\mathcal{A}/S)\simeq (\lim\limits_{\longleftarrow}\Mod(\mathcal{A}[p^{n}]),\lim\limits_{\longleftarrow}
\Fil^{1}\Mod(\mathcal{A}[p^{n}]),\lim\limits_{\longleftarrow}\phi_{1}).\]
The next step is to go modulo $p^{n}$. For the module $M(H)$ of the right hand side, it follows from proposition 4.2.1.2 of \cite{Bre1}, that $M(H)/p^{n}\simeq\Mod(\mathcal{A}[p^{n}])$.
Finally, for the module on the left we have an isomorphism $H^{1}_{Br}(\mathcal{A}/S)/p^{n}\simeq H^{1}_{Br}(\mathcal{A}_{n}/S_{n})$, as mentioned in the beginning of this section.

Notice that in the above discussion we did not include the connection $\nabla$. The reason is that for weight one modules we have a full faithful embedding $\Mod_{1}^{\phi,N}\subset\Mod_{1}^{\phi}$. In particular, the monodromy operator $N$ of the weight one module $M=H^{1}_{Br}
(\mathcal{A}_{n}/S_{n})$ has the characterizing property $\displaystyle N(M)\subset\sum_{i\geq 1}\frac{u^{i}}{q(i)!}M$, which makes it unique (\cite{Bre1}, prop. 5.1.3). According to the remark in \cite{Bre2} that follows lemma 3.2.1, every map of filtered modules whose monodromy operator $N$ satisfies the above property commutes with the monodromy, as long as it commutes with the Frobenius and therefore the above isomorphism is an isomorphism in $\Mod_1^{\phi,N}$.

\end{proof}
\subsection*{Computations for $H^{2}$} Next we consider the Breuil module $H^{2}_{Br}(\mathcal{A}/S)$. We want to express it in terms of $H^{1}$. We start with the following proposition, which follows by the deformation theory of abelian varieties (section 7 in \cite{Fal}). 
\begin{prop} There exists a smooth abelian scheme $\mathcal{Z}$ over $S$ and an embedding $\mathcal{A}\hookrightarrow\mathcal{Z}$ such that we have  a fiber product square
 \[ \xymatrix{\mathcal{A}\ar[r]\ar[d] & \mathcal{Z}\ar[d]\\
\Spec(\mathcal{O}_{K})\ar[r] & \Spec(S).
}
\]
\end{prop}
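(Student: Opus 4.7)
The plan is to realize $\mathcal{Z}$ as the unique formal lift of $\mathcal{A}$ along the surjection $g\colon S\twoheadrightarrow\mathcal{O}_K$ prescribed by Grothendieck--Messing deformation theory, and then algebraize. Working modulo $p^n$, the ideal $\Fil^1 S_n\subset S_n$ together with the standard divided powers on $(p)$ makes $S_n\twoheadrightarrow\mathcal{O}_K/p^n$ a PD-nilpotent thickening of the kind to which the theory applies.

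The lifting datum at each finite level is supplied by the Hodge filtration already in hand. Indeed, the crystal associated to the $p$-divisible group $\lim\limits_{\longrightarrow}\mathcal{A}[p^n]$, when evaluated on the PD-pair $(\mathcal{O}_K/p^n, S_n)$, is the free $S_n$-module $H^1_{Br}(\mathcal{A}/S)/p^n\simeq H^1_{Br}(\mathcal{A}_n/S_n)$, and the Breuil submodule $\Fil^1 H^1_{Br}(\mathcal{A}/S)/p^n$ is by construction a direct summand lifting the Hodge filtration of $\mathcal{A}_n:=\mathcal{A}\times\Z/p^n$. Grothendieck--Messing then yields a unique lift of the $p$-divisible group to $S_n$, and Serre--Tate converts this into a smooth abelian scheme $\mathcal{Z}_n\to\Spec(S_n)$ with $\mathcal{Z}_n\times_{S_n}\mathcal{O}_K/p^n\simeq\mathcal{A}_n$, compatibly as $n$ varies.

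To pass from the compatible system $\{\mathcal{Z}_n\}$ to an honest scheme $\mathcal{Z}$ over $\Spec(S)$, I would fix a polarization on $\mathcal{A}$, lift it uniquely through each PD-thickening by the rigidity of abelian schemes, and then apply the polarized version of formal GAGA (in the style of Chai--Faltings) to the resulting compatible system of polarized formal abelian schemes. This produces a projective smooth abelian scheme $\mathcal{Z}/S$ whose base change along $g$ is canonically identified with $\mathcal{A}$, so the displayed square is Cartesian by construction.

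The principal obstacle I anticipate is the algebraization step: since $S$ is not Noetherian in the classical sense, the standard formal GAGA statements do not directly apply, and one must either invoke the polarized variant above or appeal to the Noetherian approximation techniques used by Faltings in section 7 of \cite{Fal}. The Grothendieck--Messing input and the PD-thickening verification are, by contrast, routine once $S$ has been set up as in the previous section.
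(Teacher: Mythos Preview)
Your approach is correct and is essentially the one the paper takes: both arguments lift the $p$-divisible group of $\mathcal{A}$ from $\mathcal{O}_K=S/\Fil^1 S$ to $S$ via Grothendieck--Messing/Serre--Tate deformation theory, and then algebraize to an abelian scheme $\mathcal{Z}$ over $S$. The paper is terser---it simply points to Faltings' sections 6--7 and Theorem 10 in \cite{Fal} for both the lifting of the $p$-divisible group and the passage to an abelian scheme---whereas you spell out the PD-thickening, the Hodge-filtration input, and the algebraization step explicitly.

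Two small remarks on your write-up. First, invoking $H^1_{Br}(\mathcal{A}/S)$ and its $\Fil^1$ as the lifting datum is fine but deserves a word of caution: later in the section this group is computed via the very embedding $\mathcal{A}\hookrightarrow\mathcal{Z}$ you are constructing, so you should make clear you are using its intrinsic definition via the crystalline site (or, more directly, the Dieudonn\'e crystal of the $p$-divisible group), not the de Rham model depending on $\mathcal{Z}$. Second, you correctly flag the algebraization over the non-Noetherian ring $S$ as the delicate point; the paper sidesteps this by citing Faltings, who handles it in section 7 of \cite{Fal}, so your reference there is exactly right.
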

\begin{proof}  We will only sketch the steps that follow from sections 6 and 7 of   (\cite{Fal}). We consider  the $p$-divisible group $H$ arising from the abelian scheme $\mathcal{A}$. Moreover, similarly to the proposition \ref{H1} we consider the Breuil module $M(H)$ (filtered Frobenius-crystal in the language of \cite{Fal}) corresponding to $H$. Note that $M(H)$ without filtration depends only on the reduction  of $H$ modulo $p$. 

We consider the special fiber $\overline{H}=H\otimes_{\mathcal{O}_K}k\simeq H\otimes_{W(k)}k$. Now we follow the steps at the end of page 134 of \cite{Fal}. Namely, $H$ is a $p$-divisible group over $S/\Fil^1 S$ that deforms $\overline{H}$ and therefore we can lift it to a $p$-divisible group $\tilde{H}$ over $S$. The rest follows from \cite{Fal} (th. 10).  

\end{proof}
 Since the definition of $H^{i}_{Br}(\mathcal{A}/S)$ does not depend on the embedding, the above fiber square yields an isomorphism of Breuil modules
\[H^{i}_{Br}(\mathcal{A}/S)\simeq H^{i}_{dR}(\mathcal{Z}/S)=H^{i}(\mathcal{Z}_{et},\Omega^{\bullet}_{\mathcal{Z}/S}).\] Moreover, the Hodge cohomology of $\mathcal{A}$ coincides with the de Rham cohomology $H^{i}_{dR}(\mathcal{A}/\mathcal{O}_{K})$.
We have a natural $S$-linear map $\rho:\bigwedge^{2}H^{1}_{dR}(\mathcal{Z}/S)\rightarrow H^{2}_{dR}(\mathcal{Z}/S)$. We claim that this is a morphism of Breuil modules. Notice that the filtration on $\bigwedge^{2}H^{1}_{dR}(\mathcal{Z}/S)$ is defined as follows.
\begin{itemize}\item $\displaystyle\Fil^{1}(\bigwedge^{2}H^{1}_{dR}(\mathcal{Z}/S))$ is the image in $\bigwedge^{2}H^{1}_{dR}(\mathcal{Z}/S)$ of the module \[\Fil^{1}(H^{1}_{dR}(\mathcal{Z}/S))\otimes H^{1}_{dR}(\mathcal{Z}/S)+H^{1}_{dR}(\mathcal{Z}/S)\otimes \Fil^{1}(H^{1}_{dR}(\mathcal{Z}/S)),\] where $\displaystyle\Fil^{1}(H^{1}_{dR}(\mathcal{Z}/S)=H^{1}(\mathcal{Z}_{et},(\Fil^{1}S)\cdot
\mathcal{O}_{\mathcal{Z}}\stackrel{d}{\longrightarrow}\Omega^{1}_{\mathcal{Z}/S}
\stackrel{d}{\longrightarrow}
\Omega^{2}_{\mathcal{Z}/S}\longrightarrow\cdots)
$.
\item $\displaystyle\Fil^{2}(\bigwedge^{2}H^{1}_{dR}(\mathcal{Z}/S))=
\bigwedge^{2}\Fil^{1}(H^{1}_{dR}(\mathcal{Z}/S)).$
 \end{itemize} On the other hand, the filtration on $H^{2}_{dR}(\mathcal{Z}/S)$ is defined as follows.
\begin{itemize}\item $\Fil^{1}(H^{2}_{dR}(\mathcal{Z}/S))=H^{2}(\mathcal{Z}_{et},\mathbb{J}^{[1]}_{\mathcal{Z}/S})$, where $\mathbb{J}^{[1]}_{\mathcal{Z}/S}$ is the complex \[(\Fil^{1}S)\cdot
\mathcal{O}_{\mathcal{Z}}\stackrel{d}{\longrightarrow}\Omega^{1}_{\mathcal{Z}/S}
\stackrel{d}{\longrightarrow}
\Omega^{2}_{\mathcal{Z}/S}\longrightarrow\cdots
.\]
\item
$\Fil^{2}(H^{2}_{dR}(\mathcal{Z}/S))=H^{2}(\mathcal{Z}_{et},\mathbb{J}^{[2]}_{\mathcal{Z}/S})$, where $\mathbb{J}^{[1]}_{\mathcal{Z}/S}$ is the complex
\[(\Fil^{2}S)\cdot\mathcal{O}_{\mathcal{Z}}\stackrel{d}
{\longrightarrow}
(\Fil^{1}S)\cdot\mathcal{O}_{\mathcal{Z}}\otimes_{\mathcal{O}_{\mathcal{Z}}}
\Omega^{1}_{\mathcal{Z}/S}\stackrel{d}
{\longrightarrow}\Omega^{2}_{\mathcal{Z}/S}\longrightarrow\cdots.\]
\end{itemize} It is clear that $\rho(\Fil^{i}(\bigwedge^{2}H^{1}_{dR}(\mathcal{Z}/S)))\subset\Fil^{i}H^{2}_{dR}(\mathcal{Z}/S)$, for $i=1,2$. Moreover, $\rho$ commutes with the Frobenius map, since $\phi_{2}$ on $\Omega^{2}_{\mathcal{Z}/S}$ is defined as $\phi_{1}\wedge\phi_{1}$. It is also clear that $\rho$ commutes with the connection $\nabla$. We conclude that $\rho$ is a map of Breuil modules.
\begin{thm}\label{H2} The map $\rho:\bigwedge^{2}H^{1}_{Br}(\mathcal{A}/S)\longrightarrow H^{2}_{Br}(\mathcal{A}/S)$ is an isomorphism in $\Mod_{[0,2]}^{\phi,N}$.
\end{thm}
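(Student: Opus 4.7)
The plan is to use the preceding proposition to replace $H^i_{Br}(\mathcal{A}/S)$ with $H^i_{dR}(\mathcal{Z}/S)$, where $\mathcal{Z}/S$ is the abelian scheme lift provided in the previous proposition, and then exploit the classical fact that the de Rham cohomology of an abelian scheme is an exterior algebra on $H^1_{dR}$. Since the discussion just before the theorem already verifies that $\rho$ is a morphism in $\Mod^{\phi,N}_{[0,2]}$ (i.e.\ respects filtration, Frobenius and connection), only three things remain: bijectivity of $\rho$ on underlying $S$-modules, strictness with respect to the filtration (i.e.\ the inclusions $\rho(\Fil^i) \subset \Fil^i$ for $i=1,2$ are in fact equalities), and the matching of the induced structures.

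First, to establish bijectivity as $S$-modules, I would invoke the Künneth/multiplication-map argument for abelian schemes: for $\mu:\mathcal{Z}\times_S\mathcal{Z}\to\mathcal{Z}$ the group law, the standard primitive-element decomposition of $H^\bullet_{dR}(\mathcal{Z}/S)$ yields the canonical isomorphism $\bigwedge^n H^1_{dR}(\mathcal{Z}/S)\xrightarrow{\sim}H^n_{dR}(\mathcal{Z}/S)$ for every $n$. This holds over an arbitrary base (in particular over $S$) provided the Hodge cohomology is locally free and the Hodge-to-de-Rham spectral sequence degenerates; both hold here by Assumption \ref{assumption} together with the analogous statement for $\mathcal{Z}/S$, which one transfers from the special fiber by $p$-adic flat base change. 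Taking $n=2$ gives bijectivity of $\rho$ on underlying $S$-modules.

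Second, for the filtration matching I would unravel both sides using the complexes $\mathbb{J}^{[r]}_{\mathcal{Z}/S}$. The Hodge-style filtration on $\bigwedge^2 H^1_{dR}(\mathcal{Z}/S)$ is, by construction, the one induced by wedge from $\Fil^1 H^1_{dR}(\mathcal{Z}/S)$, while the one on $H^2_{dR}(\mathcal{Z}/S)$ comes from the subcomplexes $(\Fil^{2-i}S)\otimes \Omega^i_{\mathcal{Z}/S}$. Since $\Omega^2_{\mathcal{Z}/S}=\bigwedge^2\Omega^1_{\mathcal{Z}/S}$ and since $\Fil^i S$ for $i\leq p-1$ is generated by divided powers of $E(u)$, one checks directly that a wedge of $\Fil^1$-sections lands in $\Fil^2$ and conversely every local section of $\mathbb{J}^{[2]}_{\mathcal{Z}/S}$ is, modulo boundaries, a sum of such wedges. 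Combined with the bijection of the preceding step this upgrades both containments to equalities. Compatibility with $\phi_2$ and $N$ is then automatic from $\phi_2 = \phi_1\wedge\phi_1$ on $\Omega^2_{\mathcal{Z}/S}$ and from the Leibniz rule for the connecting map in Remark \ref{conne}.

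The main obstacle I anticipate is the careful bookkeeping in the second step: the underlying wedge isomorphism for abelian schemes is classical, but strictness of $\rho$ with respect to the filtrations requires genuine work because the complexes $\mathbb{J}^{[r]}_{\mathcal{Z}/S}$ mix the divided-power filtration of $S$ with the Hodge filtration coming from $\Omega^\bullet_{\mathcal{Z}/S}$, and I must argue that the image of $\bigwedge^2\Fil^1 H^1_{dR}(\mathcal{Z}/S)$ in $H^2_{dR}(\mathcal{Z}/S)$ exhausts $H^2(\mathcal{Z}_{et},\mathbb{J}^{[2]}_{\mathcal{Z}/S})$, rather than only landing inside it. The cleanest route is to prove the stronger statement that the entire bigraded complex $\mathbb{J}^{[\bullet]}_{\mathcal{Z}/S}$ is, up to filtered quasi-isomorphism, the filtered exterior algebra on $\mathbb{J}^{[\bullet]}_{\mathcal{Z}/S}$ in degree $1$; once this is in place, all compatibilities follow and the theorem is immediate.
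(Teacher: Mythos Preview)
Your outline matches the paper's: first the $S$-module isomorphism via the classical exterior-algebra structure of de Rham cohomology of an abelian scheme (the paper cites \cite{Berthelot/Breen/Messing1982}, Cor.\ 2.5.5), and then the strictness for the filtration, with $\phi$ and $N$ being automatic. So the architecture is right.

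The difference is in how the filtration step is carried out. You propose either a direct check that every class in $H^{2}(\mathcal{Z}_{et},\mathbb{J}^{[2]}_{\mathcal{Z}/S})$ is, modulo boundaries, a sum of wedges, or a global ``filtered exterior algebra'' statement for the complex $\mathbb{J}^{[\bullet]}_{\mathcal{Z}/S}$. Neither of these is made precise in your proposal, and the second is not even well-formed as stated (what is ``the filtered exterior algebra on $\mathbb{J}^{[\bullet]}_{\mathcal{Z}/S}$ in degree $1$''?). The paper instead proves strictness by passing to the associated graded and showing that $\rho$ induces isomorphisms on $gr^{0}$ and $gr^{1}$ separately. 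Concretely: $gr^{0}(H^{2}_{dR}(\mathcal{Z}/S))\simeq H^{2}(\mathcal{O}_{\mathcal{A}})$ and $gr^{0}(\bigwedge^{2}H^{1}_{dR})\simeq\bigwedge^{2}H^{1}(\mathcal{O}_{\mathcal{A}})$, and these agree by the Hopf-algebra structure; for $gr^{1}$ the paper computes the quotient complex $\mathbb{J}^{[1]}/\mathbb{J}^{[2]}$ explicitly (it is a two-term complex with differential given by multiplication by $E'(u)$), takes its $H^{2}$, and then matches an adapted basis of $gr^{1}(\bigwedge^{2}H^{1}_{dR})$ with explicit classes in that hypercohomology. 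This $gr^{1}$ computation is, in the paper's words, ``the most involved part of the proof,'' and it is precisely the content you have flagged as the main obstacle but not actually addressed.

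In short: your plan is correct and coincides with the paper's, but the substantive work lives entirely in the filtration/strictness step, and your proposal stops exactly where that work begins. If you want to complete the argument, the graded-pieces route the paper takes is both concrete and clean; your ``filtered quasi-isomorphism'' route would, once made precise, almost certainly reduce to the same computation.
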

\begin{proof} It suffices to show the following two claims.
\begin{enumerate}[(i)]\item $\rho$ is an isomorphism on the underlying $S$-modules.
\item $\rho$ induces isomorphisms on the associated graded pieces. Namely   $gr^{0}(\bigwedge^{2}H^{1}_{dR}(\mathcal{Z}/S)))\simeq gr^{0}H^{2}_{dR}(\mathcal{Z}/S)$ and
$gr^{1}(\bigwedge^{2}H^{1}_{dR}(\mathcal{Z}/S)))\simeq gr^{1}H^{2}_{dR}(\mathcal{Z}/S)$.
\end{enumerate}

\underline{Step 1:} We show that $\rho:\bigwedge^{2}H^{1}_{dR}(\mathcal{Z}/S)))\rightarrow H^{2}_{dR}(\mathcal{Z}/S)$ is an isomorphism of  $S$-modules. \\
This follows directly from \cite{Berthelot/Breen/Messing1982}, corollary 2.5.5. 

\underline{Step 2:} We show that $\rho$ induces an isomorphism on $gr^{0}$. \\
We start by computing $gr^{0}(H^{2}_{dR}(\mathcal{Z}/S))=H^{2}_{dR}(\mathcal{Z}/S)/\Fil^{1}H^{2}_{dR}(\mathcal{Z}/S)$. We consider the short exact sequence of complexes
$0\rightarrow\mathbb{J}^{[1]}_{\mathcal{Z}/S}\rightarrow\Omega^{\star}_{\mathcal{Z}/S}\rightarrow\mathbb{H}\rightarrow 0,$ where $\mathbb{H}$ is the complex
$\mathcal{O}_{\mathcal{Z}}/\Fil^{1}(S)\rightarrow 0\rightarrow 0$. We therefore obtain a long exact sequence
\[\cdots\rightarrow H^{2}(\mathcal{Z}_{et},\mathbb{J}^{[1]}_{\mathcal{Z}/S}))\rightarrow H^{2}(\mathcal{Z}_{et},\Omega^{
\star}_{\mathcal{Z}/S}))\rightarrow H^{2}(\mathcal{Z}_{et},\mathbb{H})\rightarrow\cdots\] Notice that $\mathcal{O}_{\mathcal{Z}}/\Fil^{1}(S)\simeq\mathcal{O}_{\mathcal{A}}$. We already know by the Breuil module structure on $H^{i}_{dR}(\mathcal{Z}/S)$ that the maps $H^{i}(\mathcal{Z}_{et},\mathbb{J}^{[1]}_{\mathcal{Z}/S}))\rightarrow H^{i}(\mathcal{Z}_{et},\Omega^{
\star}_{\mathcal{Z}/S}))$ are injective for $i=2,3$, hence we conclude that $gr^{0}(H^{2}_{dR}(\mathcal{Z}/S))\simeq H^{2}(\mathcal{O}_{\mathcal{A}}).$
On the other hand, we can very easily see that $gr^{0}(\bigwedge^{2}H^{1}_{dR}(\mathcal{Z}/S))\simeq\bigwedge^{2}gr^{0}H^{1}_{dR}(\mathcal{Z}/S)$ and by a very similar computation of complexes as above, we conclude that
$gr^{0}(\bigwedge^{2}H^{1}_{dR}(\mathcal{Z}/S))\simeq\bigwedge^{2}H^{1}(\mathcal{O}_{\mathcal{A}}).$

Note that it suffices to prove that the natural map
$\bigwedge^{2}H^{1}(\mathcal{O}_{\mathcal{A}})\rightarrow H^{2}(\mathcal{O}_{\mathcal{A}})$ is injective. For, it is automatically surjective, since it is induced from the filtration preserving surjective map $\rho$. To prove injectivity, we proceed as in step 1. It suffices to prove injectivity after  $\otimes\Q_{p}$, and then the result follows by the structure of tensor algebra on $H^{\star}(\mathcal{O}_{\mathcal{A}})$ (since $\mathcal{O}_{\mathcal{A}}$ is a Hopf algebra, the same argument as in step 1 applies).

\underline{Step 3:} We show that $\rho$ is an isomorphism on $gr^{1}$.
This is the most involved part of the proof. First, we compute $gr^{1}(H^{2}_{dR}(\mathcal{Z}/S))=\Fil^{1}H^{2}_{dR}(\mathcal{Z}/S)/\Fil^{2}H^{2}_{dR}(\mathcal{Z}/S)$. We consider the short exact sequence of complexes
$0\rightarrow\mathbb{J}^{[2]}_{\mathcal{Z}/S}\rightarrow\mathbb{J}^{[1]}_{\mathcal{Z}/S}\rightarrow\mathbb{H}'\rightarrow 0,$ where $\mathbb{H}'$ is the complex,
$(\Fil^{1}S/\Fil^{2}S)\cdot\mathcal{O}_{\mathcal{Z}}\stackrel{d}{\longrightarrow} \mathcal{O}_{\mathcal{A}}\otimes_{\mathcal{O}_{\mathcal{Z}}}\Omega^{1}_{\mathcal{Z}/S}\rightarrow 0$. We need to understand the differential $d$. A section of $(\Fil^{1}S/\Fil^{2}S)\cdot\mathcal{O}_{\mathcal{Z}}$ is of the form $E(u)f$, for some function $f\in\mathcal{O}_{\mathcal{Z}}$. Then $d(E(u)f)=fE'(u)du+E(u)df=fE'(u)du\in\mathcal{O}_{\mathcal{A}}\otimes_{\mathcal{O}_{\mathcal{Z}}}\Omega^{1}_{\mathcal{Z}/S}$. Thus, the map $d$ is just multiplication by $E'(u)$ (notice that $\Fil^{1}S/\Fil^{2}S\simeq S/\Fil^{1}S$, where the inverse map is given by multiplication by $E(u)$). Imitating the proof of step 2, we conclude that
\[gr^{1}(H^{2}_{dR}(\mathcal{Z}/S))\simeq H^{2}(\mathcal{O}_{\mathcal{A}}\stackrel{\cdot E'(u)}{\longrightarrow} \mathcal{O}_{\mathcal{A}}\otimes_{\mathcal{O}_{\mathcal{Z}}}\Omega^{1}_{\mathcal{Z}/S}\rightarrow 0).\]
We move on to compute the cohomology of this double complex. Let $0\rightarrow I^{\bullet}$ and $0\rightarrow J^{\bullet}$ be injective resolutions of $\mathcal{O}_{\mathcal{A}}$ and $\mathcal{O}_{\mathcal{A}}\otimes_{\mathcal{O}_{\mathcal{Z}}}\Omega^{1}_{\mathcal{Z}/S}$ respectively. The map $d$ induces chain maps $\Gamma(\mathcal{Z},I^{n})\rightarrow\Gamma(\mathcal{Z},J^{n})$, which are all given by multiplication by $E'(u)$. The complex $\mathbb{H}'$ on degree $n$ is given by
\[\Gamma(\mathcal{Z},I^{n})\oplus\Gamma(\mathcal{Z},J^{n-1})\stackrel{\partial}{\longrightarrow}
\Gamma(\mathcal{Z},I^{n+1})\oplus\Gamma(\mathcal{Z},J^{n}),\] where
$\partial(x,y)=(dx, xE'(u)du-dy)$. Therefore, $H^{2}(\mathbb{H}')$ consists of classes of pairs $[x,y]\in\Gamma(\mathcal{Z},I^{2})\oplus\Gamma(\mathcal{Z},J^{1})$ such that the following two properties hold.
\begin{itemize}
\item $dx=0$
\item $dy=xE'(u)du$
\end{itemize}
and the equivalence relation is described by the image of $\Gamma(\mathcal{Z},I^{1})\oplus\Gamma(\mathcal{Z},J^{0})$.

Next we consider the associated graded $gr^{1}(\bigwedge^{2}H^{1}_{dR}(\mathcal{Z}/S))$. As in step 2, it suffices to show that $\rho$ maps this group injectively into $gr^{1}(H^{2}_{dR}(\mathcal{Z}/S))$. We want to describe a set of basis elements of this group. We start by choosing an adapted basis $\{e_{i}\}_{1\leq i\leq d}$ of $H^{1}_{dR}(\mathcal{Z}/S)$. We assume that $e_{i}\in\Fil^{1}H^{1}_{dR}(\mathcal{Z}/S)$ for $i\leq d_{1}$ and $e_{i}\not\in\Fil^{1}H^{1}_{dR}(\mathcal{Z}/S)$, for $i>d_{1}$. Then $gr^{1}(\bigwedge^{2}H^{2}_{dR}(\mathcal{Z}/S))$ is freely generated by the image of the following two families, $\mathcal{B}_{1}=\{e_{i}\wedge e_{j}:i\leq d_{1},\;j>d_{1}\}$ and $\mathcal{B}_{2}=\{E(u)e_{k}\wedge e_{j}:j>k>d_{1}\}$.

The first observation is that the image of the elements $e_{k}\wedge e_{j}$ for $j>k>d_{1}$ in $\bigwedge^{2}gr^{0}H^{1}_{dR}(\mathcal{Z}/S)$ form an $\mathcal{O}_{K}$-basis of $\bigwedge^{2}gr^{0}H^{1}_{dR}(\mathcal{Z}/S)$ which by step 2 is isomorphic to $H^{2}(\mathcal{O}_{\mathcal{A}})$. We conclude that $\mathcal{B}_{2}$ is an $\mathcal{O}_{K}$-basis of $E(u)\cdot H^{2}(\mathcal{O}_{\mathcal{A}})$.

Next, we observe that we have an injection $E(u)\cdot H^{2}(\mathcal{O}_{\mathcal{A}})\hookrightarrow H^{2}(\mathbb{H}')$ given by $x\rightarrow[x,0]$.

Similarly we have an injection $H^{1}(\mathcal{O}_{\mathcal{A}}\otimes_{\mathcal{O}_{\mathcal{Z}}}
\Omega^{1}_{\mathcal{Z}/S})\hookrightarrow H^{2}(\mathbb{H}')$ given by $y\rightarrow[0,y]$. With a similar computation as above we can show that $\mathcal{B}_{1}$ forms an $\mathcal{O}_{K}$-basis of $H^{1}(\mathcal{O}_{\mathcal{A}}\otimes_{\mathcal{O}_{\mathcal{Z}}}
\Omega^{1}_{\mathcal{Z}/S})$, which concludes the proof.

\end{proof}
\subsection{From syntomic to crystalline} The last ingredient we need in order to finish the proof of theorem \ref{BIG2} is a map
$H^2(\mathcal{A}_{syn},\mu_{p^n})\rightarrow\Hom_{\Mod_{[0,2]}^{\phi}}(\mathbf{1}_{n}[-1],H^2_{Br}(\mathcal{A}_n/S_n))$.

We consider again the embedding $\mathcal{A}\hookrightarrow\mathcal{Z}$, where $\mathcal{Z}$ is an abelian scheme over $\Spec(S)$. Notice that we have a map of complexes $\mathbb{J}^{[1]}_{\mathcal{A}\subset\mathcal{Z}}\stackrel{1-\phi_1^S}{\longrightarrow}\mathbb{O}_{\mathcal{A}\subset\mathcal{Z}}$. We consider the kernel,

\[S_n(1)^{Br}=\ker(1-\phi_1^S)=\Cone(\mathbb{J}^{[1]}_{\mathcal{A}/\mathcal{Z}}\otimes^{\mathbb{L}}\Z/p^n
\stackrel{1-\phi_1^S}{\longrightarrow}
\mathbb{O}_{\mathcal{A}/\mathcal{Z}}\otimes^{\mathbb{L}}\Z/p^n)[-1].\] We can now restate the goal of this subsection. We want to show the existence of a map
$H^2(\mathcal{A}_{syn},\mu_{p^n})\rightarrow H^2(\mathcal{A}_{et},S_n(1)^{Br})$. We need a bridge through the crystalline-syntomic site, which is defined as follows. For every $n\geq 1$, we have embeddings $\mathcal{A}_1\hookrightarrow\mathcal{Z}_n$, where $\mathcal{A}_{1}=\mathcal{A}\times\Z/p$ and $\mathcal{Z}_n=\mathcal{Z}\times\Z/p^n$. The small crystalline-syntomic site $(\mathcal{A}_{1}/S_n)_{crys-syn}$ has objects  quadruples  $(U,T,i,\delta)$, where $U\rightarrow\mathcal{A}_1$ is a syntomic map, $U\stackrel{i}{\hookrightarrow}T$ is a closed immersion that fits into a commutative diagram,
$\xymatrix{
U\ar[r]^{i}\ar[d] & T\ar[d]\\
\mathcal{A}_1\ar[r] &\mathcal{Z}_n,
}$, and $\delta$ is  a structure of an ideal of divided powers corresponding to the closed immersion $i$ and compatible with the PD-structure on $\mathcal{A}_1\hookrightarrow\mathcal{Z}_n$. For simplicity we will write the objects as pairs $(U,T)$. There are two natural structure sheaves on this site, namely $\mathcal{O}_{n, cris}$, given by $\mathcal{O}_{n, cris}(U,T):=\mathcal{O}(T)$ and $\mathcal{O}_{\mathcal{A}_n}$, given by $\mathcal{O}_{\mathcal{A}_n}(U,T):=\mathcal{O}(U)$. We have a surjection
$\mathcal{O}_{n, crys}\twoheadrightarrow\mathcal{O}_{\mathcal{A}_n}$ with kernel $\mathcal{J}_n$, a sheaf of PD-ideals. Similarly to the above situation, we have the syntomic-crystalline sheaf,
$S_n(1)^{crys-syn}:=\ker(\mathcal{J}_n\stackrel{1-\phi_1}{\longrightarrow}\mathcal{O}_{n, cris}).$ As in the syntomic situation considered in section 4, it is known that there is an isomorphism
$H^2(\mathcal{A}_{et},S_n(1)^{Br})\simeq H^2(\mathcal{A}_{crys-syn},S_n(1)^{crys-syn}).$

We next imitate the argument from Fontaine-Messing (\cite{Fon-Mess}, page 199), namely we construct a map of crystalline-syntomic sheaves  $\mu_{p^n}\rightarrow S_n(1)^{crys-syn}$ using the logarithm.  We briefly sketch the argument here. 

Let $(U,T)\in(\mathcal{A}_{1}/S_n)_{crys-syn}$ and consider a section $\zeta\in\mu_{p^n}(U)\subset\mathcal{O}(U)=\mathcal{O}_{\mathcal{A}_n}(U)$. Using the surjection $\mathcal{O}(T)\twoheadrightarrow \mathcal{O}(U)$, we lift $\zeta$ to a section $\zeta'\in\mathcal{O}(T)=\mathcal{O}_{n,crys}(U,T)$. We clearly have $\zeta'^{p^n}\in 1+\mathcal{J}_n(U,T)$.  Moreover, its logarithm, $\log(\zeta'^{p^n})$ is well defined and lies in $\mathcal{J}_n$. We define in this way a map $\mu_{p^n}(U,T)\rightarrow \mathcal{J}_n(U,T)$. Next notice that $\log(\zeta'^{p^n})$ is an eigenvector of the Frobenius $\phi^S$, namely $p\log(\zeta'^{p^n})=\log((\zeta'^{p^n})^p)=\phi^S(\log(\zeta'^{p^n})$. Therefore, $\log(\zeta'^{p^n})\in\ker(\mathcal{J}_n\stackrel{p-\phi^S}{\longrightarrow}\mathcal{O}_{n,cris})$. By passing to the derived categories, we can conclude that $\log(\zeta'^{p^n})$ lies in $S_n(1)^{crys-syn}(U,T)$ yielding a map of crystalline-syntomic sheaves, $\mu_{p^n}\rightarrow S_n(1)^{crys-syn}$.


To finish the argument, we consider the natural inclusion of topologies $\mathcal{A}_{syn}\stackrel{\nu}{\rightarrow}\mathcal{A}_{crys-syn}$, that sends a syntomic covering $\{U_i\rightarrow\mathcal{A}\}$ to $(U_i,T_i)$, where $T_i$ is the trivial PD-thickening. This induces a map on cohomology $H^i(\mathcal{A}_{syn},\mu_{p^n})\rightarrow H^i(\mathcal{A}_{crys-syn},\mu_{p^n})$. Here we identified the syntomic sheaf $\mu_{p^n}$ with the pullback $\nu^\star(\mu_{p^n})$ of the crystalline-syntomic $\mu_{p^n}$.

\begin{rem} We note that if we wanted to avoid the derived categories language, we should have proceeded slightly differently, taking more care of the divided Frobenius, $\phi_1^S$. Namely, imitating Fontaine-Messing, define the crystalline-syntomic sheaf $S_n(1)^{crys-syn}$ as the image of the natural map $\tilde{S}_{n+1}^{crys-syn}\rightarrow\tilde{S}_{n}^{crys-syn}$, where $\tilde{S}_{n}^{crys-syn}=\ker(\mathcal{J}_n\stackrel{p-\phi^S}{\longrightarrow}\mathcal{O}_{n,cris})$. Then we should instead start with a section $\zeta\in\mu_{p^{n+1}}(U)$ and a lift $\zeta'\in\mathcal{O}(T)$ and at the end use some diagram chasing to obtain the desired map. 
\end{rem}
\begin{rem}\label{forget} We can in fact show that the map just obtained factors through the group
$\Hom_{\Mod_{[0,2]}^{\phi,N}}(\mathbf{1}_{n}[-1],H^2_{Br}(\mathcal{A}_n/S_n))$. Although we won't need this fact, we note that the map, \[\Hom_{\Mod_{[0,2]}^{\phi,N}}(\mathbf{1}_{1}[-1]^{n},H^{2}_{Br}
(\mathcal{A}_{n}/S_{n}))\rightarrow \Hom_{\Mod_{[0,2]}^{\phi}}(\mathbf{1}_{1}[-1]^{n},H^{2}_{Br}
(\mathcal{A}_{n}/S_{n})),\] obtained by the forgetful functor
is in fact an isomorphism. For, by theorem \ref{H2} we have an isomorphism of torsion Breuil modules $H^{2}_{Br}
(\mathcal{A}_{n}/S_{n})\simeq \bigwedge^{2} H^{1}_{Br}
(\mathcal{A}_{n}/S_{n})$. The unique property of the monodromy $N$ on $H^{1}_{Br}(\mathcal{A}_{n}/S_{n})$ mentioned in the proof of proposition \ref{H1} passes to the tensor product $M\otimes M$ and hence to $H^{2}_{Br}
(\mathcal{A}/S)$. The monodromy on the weight one module $\mathbf{1}_{1}^{n}[-1]$ clearly has the same property and therefore the forgetful map is an isomorphism.
\end{rem}
\vspace{1pt}
\section{Proof of the main Theorem} In this section we complete the proof of theorem \ref{BIG2}. We first need the following definition.

\subsection*{Symmetric Homomorphisms} Let $M$ be a filtered free module. We consider the module $\bigwedge^{2}M$. Since 2 is invertible in $S$, we can think of $\bigwedge^{2}M$ as a direct summand of $M\otimes M$. For, the map $\displaystyle\alpha\wedge\beta\rightarrow\frac{\alpha\otimes\beta-\beta\otimes\alpha}{2}$ gives a splitting $\bigwedge^{2}M\hookrightarrow M\otimes M$. We have a similar splitting for the torsion module $M\otimes M=\tilde{M}/p^{n}\otimes\tilde{M}/p^{n}$, where $\tilde{M}$ is a strongly divisible module of weight one. We give the following definition.

\begin{defn}\label{sym} Let $\mathcal{G}$ be a finite flat group scheme over $\mathcal{O}_{K}$ such that $p^{n}$ annihilates $\mathcal{G}$. A homomorphism $f:\mathcal{G}\rightarrow\mathcal{G}^{\star}$ of finite flat groups schemes over $\mathcal{O}_{K}$ is called symmetric if the corresponding morphism of Breuil modules, $\mathbf{1}_{1}^{n}[-1]\rightarrow (\Mod(\mathcal{G}))^{\otimes 2}$,  obtained by lemma \ref{iso2}, factors through $\bigwedge^{2}\Mod(\mathcal{G})$. Equivalently, $f:\mathcal{G}\rightarrow\mathcal{G}^{\star}$ is symmetric if and only if $f^\star=-f$, where $f^\star$ is the dual homomorphism. We will denote the subgroup of such homomorphisms by $Sym\mathcal{H}om_{ffgps/\mathcal{O}_{K}}(\mathcal{G},\mathcal{G}^{\star})$.
\end{defn}
\begin{rem} We note that if $\mathcal{A}$ is an abelian scheme and $f:\mathcal{A}[p^n]\rightarrow\mathcal{A}^\star[p^n]$ is a symmetric homomorphism, then the corresponding map of the generic fibers $f:A[p^n]\rightarrow A^\star[p^n]$ induces a $G_K$-homomorphism $\bigwedge^2 A[p^n]\rightarrow\mu_{p^n}$. For, the isomorphism $A^\star[p^n]\simeq\Hom(A[p^n],\mu_{p^n})$ is obtained via the Weil pairing, which is antisymmetric. If for example $J$ is the Jacobian variety of a smooth complete curve over $K$ (and hence self dual), the identity map $J[p^n]\rightarrow J[p^n]$ is symmetric in the above sense. This is the reason we chose the name symmetric.
\end{rem}
The following corollary is an immediate consequence of definition \ref{sym}.
\begin{cor}\label{final} Let $\mathcal{G}$ be a finite flat group scheme over $\mathcal{O}_{K}$ such that $p^{n}$ annihilates $\mathcal{G}$. There is a canonical isomorphism
\[Sym\mathcal{H}om_{ffgps/\mathcal{O}_{K}}(\mathcal{G},\mathcal{G}^{\star})\simeq\Hom_{\Mod^{\phi}_{[0,2]}}
(\mathbf{1}_{1}^{n}[-1],\bigwedge^{2}\Mod(\mathcal{G})).\]
\end{cor}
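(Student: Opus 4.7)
The plan is to combine Corollary \ref{iso2} with the definition of a symmetric homomorphism in a direct way, exploiting the fact that $\bigwedge^{2}\Mod(\mathcal{G})$ appears naturally as a direct summand of $\Mod(\mathcal{G})^{\otimes 2}$. First I would apply Corollary \ref{iso2} with the choice $M=M'=\Mod(\mathcal{G})$ (viewing $\mathcal{G}$ as the finite flat group scheme associated to this torsion Breuil module) to obtain a canonical isomorphism
\[\Phi\colon\Hom_{\Mod^{\phi}_{[0,2]}}(\mathbf{1}_{1}^{n}[-1],\Mod(\mathcal{G})\otimes\Mod(\mathcal{G}))\stackrel{\simeq}{\longrightarrow}\Hom_{ffgps/\mathcal{O}_{K}}(\mathcal{G},\mathcal{G}^{\star}).\]

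Next, as noted at the start of this section, since $2$ is invertible in $S$ the antisymmetrization map $\alpha\wedge\beta\mapsto\frac{1}{2}(\alpha\otimes\beta-\beta\otimes\alpha)$ realizes $\bigwedge^{2}\Mod(\mathcal{G})$ as a direct summand of $\Mod(\mathcal{G})^{\otimes 2}$ inside the category $\Mod^{\phi}_{[0,2]}$. I would verify briefly that both the inclusion and this splitting are morphisms in $\Mod^{\phi}_{[0,2]}$, which follows immediately from their $S$-linearity together with the way the filtration and divided Frobenius on the tensor product are defined in Definition \ref{tensor}. As a consequence, the inclusion identifies
\[\Hom_{\Mod^{\phi}_{[0,2]}}(\mathbf{1}_{1}^{n}[-1],\bigwedge^{2}\Mod(\mathcal{G}))\]
with the subgroup of those morphisms $\mathbf{1}_{1}^{n}[-1]\to\Mod(\mathcal{G})^{\otimes 2}$ that factor through $\bigwedge^{2}\Mod(\mathcal{G})$.

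Finally, by Definition \ref{sym} the preimage under $\Phi$ of $Sym\mathcal{H}om_{ffgps/\mathcal{O}_{K}}(\mathcal{G},\mathcal{G}^{\star})$ is precisely the above subgroup of morphisms factoring through $\bigwedge^{2}\Mod(\mathcal{G})$. Composing the two identifications yields the desired canonical isomorphism. The argument is essentially formal once Corollary \ref{iso2} is in hand: the notion of a symmetric homomorphism has been engineered precisely so that this correspondence holds. The only point warranting any care is the compatibility of the antisymmetrization splitting with the filtered Frobenius structure, and this is straightforward from the $S$-linearity of the construction, so I do not expect any genuine obstacle.
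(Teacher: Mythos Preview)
Your proof is correct and follows exactly the same approach as the paper, which simply declares the corollary an immediate consequence of Definition \ref{sym}. You have just spelled out the two ingredients implicit in that declaration: the isomorphism of Corollary \ref{iso2} and the direct-summand splitting $\bigwedge^{2}\Mod(\mathcal{G})\hookrightarrow\Mod(\mathcal{G})^{\otimes 2}$ noted immediately before the definition.
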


\subsection{Completion of the proof}
\begin{proof} (of theorem \ref{BIG2}).\\
The first step is to refine corollary \ref{cyclic} and express the orthogonal complement of the image of the cycle map in terms of homomorphisms of finite flat group schemes.\\
\underline{Step 1:}
We show that the orthogonal complement under the Tate duality pairing $(\star\star)$ of the image of the cycle map $c_{p^{n}}:T(A)\longrightarrow H^{2}(K,\bigwedge^{2}A[p^{n}])$ is the image of the injection
\[Sym\mathcal{H}om_{ffgps/\mathcal{O}_{K}}(\mathcal{A}[p^{n}],\mathcal{A}^{\star}[p^{n}])\hookrightarrow
\Hom_{G_{K}}(\bigwedge^{2}A[p^{n}],\mu_{p^{n}}).\] Notice that because of proposition \ref{annih}, it suffices to show that the orthogonal complement in question is a subset of $Sym\mathcal{H}om_{ffgps/\mathcal{O}_{K}}(\mathcal{A}[p^{n}],\mathcal{A}^{\star}[p^{n}])$.

By corollary \ref{cyclic} we know that this orthogonal complement is the image of the composition
\[H^{2}_{syn}(\mathcal{A},\mu_{p^{n}})\rightarrow H^{2}(A,\mu_{p^{n}})\rightarrow \Hom_{G_{K}}(\bigwedge^{2}A[p^{n}],\mu_{p^{n}}). \]
It therefore suffices to show that this map factors through $Sym\mathcal{H}om_{ffgps/\mathcal{O}_{K}}(\mathcal{A}[p^{n}],\mathcal{A}^{\star}[p^{n}])$.
We have a map $H^{2}_{syn}(\mathcal{A},\mu_{p^{n}})\rightarrow\Hom_{\Mod_{[0,2]}^{\phi,N}}(\mathbf{1}_{1}[-1]^{n},H^{2}_{Br}
(\mathcal{A}_{n}/S_{n}))$ (see subsection 6.2 and remark \ref{forget}).
Next, by theorem \ref{H2} and corollary \ref{final} we have an isomorphism,
\[\Hom_{\Mod_{[0,2]}^{\phi}}(\mathbf{1}_{1}[-1]^{n},H^{2}_{Br}
(\mathcal{A}_{n}/S_{n}))\simeq Sym\mathcal{H}om_{ffgps/\mathcal{O}_{K}}(\mathcal{A}[p^{n}],\mathcal{A}^{\star}[p^{n}]).\] By the way we constructed the maps, the following diagram is commutative
\[\xymatrix{
& H^{2}_{syn}(\mathcal{A},\mu_{p^{n}})\ar[r]\ar[d]& Sym\mathcal{H}om_{ffgps/\mathcal{O}_{K}}(\mathcal{A}[p^{n}],\mathcal{A}^{\star}[p^{n}])\ar[d]\\
& H^{2}(A,\mu_{p^{n}})\ar[r] & \Hom_{G_{K}}(\bigwedge^{2}A[p^{n}],\mu_{p^{n}}),
\\
}\] which completes the first step.

\underline{Step 2:} We pass to the symbolic part $H^{2}_{s}(K,A[p^{n}]\otimes B[p^{n}])$ by considering the product $A\times B$.
For the abelian variety $X=A\times B$, step 1 gives us the orthogonal complement under the pairing $(\star\star)$ of $\img[S_{2}(K;A\times B)\stackrel{s_{p^{n}}}{\longrightarrow}H^{2}(K,\bigwedge^{2}(A\times B)[p^{n}])]$.

For abelian varieties $A_{1},\cdots,A_{r}$ over $K$, the Somekawa $K$-group $K(K;A_{1},\cdots,A_{r})$ is known to satisfy covariant functoriality (\cite{Som}, remark before theorem 1.4). This functoriality  yields a decomposition
\[K(K;A\times B, A\times B)\simeq K(K;A,A)\oplus K(K;A,B)\oplus K(K;B,B).\] If we divide with the action of the symmetric group in two variables, we get a decomposition \[S_2(K;A\times B)\simeq S_2(K;A)\oplus K(K;A,B)\oplus S_2(K;B),\] where we recall that $S_2(K;A)$ is the quotient of $K(K;A,A)$ by the action of the symmetric group in two variables. 
On the other hand, we have an isomorphism of $G_{K}$-modules
\[\bigwedge^{2}(A\times B)[p^{n}]\simeq\bigwedge^{2}A[p^{n}]\oplus A[p^{n}]\otimes B[p^{n}]\oplus \bigwedge^{2}B[p^{n}].\]
We have a similar decomposition of the group $Sym\mathcal{H}om_{ffgps/\mathcal{O}_{K}}((\mathcal{A}\times\mathcal{B})[p^{n}],(\mathcal{A}
\times\mathcal{B})^{\star}[p^{n}])$.
The theorem then follows, after we observe that under the map $s_{p^{n}}$, the direct summands $S(K;A,A)$ and $S(K;B,B)$ map to $H^{2}(K,\bigwedge^{2}A[p^{n}])$ and $H^{2}(K,\bigwedge^{2}B[p^{n}])$ respectively, for which we know the orthogonal complements by step 1, leaving the piece $K(K;A,B)\stackrel{s_{p^{n}}}{\longrightarrow}H^{2}(K,A[p^{n}]\otimes B[p^{n}])$, whose orthogonal complement has to be $\mathcal{H}om_{ffgps/\mathcal{O}_{K}}(\mathcal{A}[p^{n}],\mathcal{B}^{\star}[p^{n}])$.

\end{proof}
\vspace{1pt}
\section{Applications}
\subsection{Triviality of the Galois symbol when the ramification index is small} In the last subsection we will provide specific examples where the Galois symbol is non trivial. On the other extreme, we have the following vanishing result, which is a direct consequence of theorem \ref{BIG2} combined with a theorem of Raynaud.
\begin{cor} Suppose that $p>3$. Let $e$ be the ramification index of $K$ and assume that $e<p-1$. For abelian varieties $A,B$ over $K$ with good reduction, the Galois symbol $K(K;A,B)\stackrel{s_{p^n}}{\longrightarrow}H^2(K,A[p^n]\otimes B[p^n])$ vanishes.
\end{cor}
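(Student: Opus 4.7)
The plan is to invoke Theorem \ref{BIG2} and then show that the hypothesis $e<p-1$ forces the orthogonal complement of $\img(s_{p^n})$ to be the entire dual group, which by perfectness of $(\star)$ forces the Galois symbol to vanish. Concretely, Theorem \ref{BIG2} identifies the orthogonal complement of $H^2_s(K,A[p^n]\otimes B[p^n])$ under pairing $(\star)$ with $\mathcal{H}om_{ffgps/\mathcal{O}_K}(\mathcal{A}[p^n],\mathcal{B}^\star[p^n])$, and this sits inside $\Hom_{G_K}(A[p^n],B^\star[p^n])$ via restriction to the generic fiber. It therefore suffices to show that every $G_K$-equivariant map $A[p^n]\to B^\star[p^n]$ is the generic fiber of a morphism of finite flat group schemes over $\mathcal{O}_K$.

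First I would note that since $A$ and $B$ have good reduction, the Néron models $\mathcal{A}$ and $\mathcal{B}^\star$ are abelian schemes over $\Spec(\mathcal{O}_K)$, and hence $\mathcal{A}[p^n]$ and $\mathcal{B}^\star[p^n]$ are finite flat commutative group schemes over $\mathcal{O}_K$ with generic fibers $A[p^n]$ and $B^\star[p^n]$. Thus both Galois modules in question extend to finite flat group schemes over $\mathcal{O}_K$.

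The key input is then Raynaud's theorem (\cite{Raynaud}, Cor.~3.3.6): when the absolute ramification index $e$ of $K$ satisfies $e<p-1$, the generic fiber functor from the category of finite flat commutative $p$-power order group schemes over $\mathcal{O}_K$ to the category of finite continuous $G_K$-modules of $p$-power order is fully faithful. Applied to the two finite flat models $\mathcal{A}[p^n]$ and $\mathcal{B}^\star[p^n]$, this yields a bijection
\[
\mathcal{H}om_{ffgps/\mathcal{O}_K}(\mathcal{A}[p^n],\mathcal{B}^\star[p^n])\xrightarrow{\;\sim\;}\Hom_{G_K}(A[p^n],B^\star[p^n]).
\]

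Combining with Theorem \ref{BIG2}, the orthogonal complement of $\img(s_{p^n})$ under $(\star)$ coincides with the entire group $\Hom_{G_K}(A[p^n],B^\star[p^n])$. Since the Tate duality pairing $(\star)$ is perfect, this forces $\img(s_{p^n})=0$, which is the claim. The main (and essentially only) obstacle is invoking Raynaud's full-faithfulness correctly; no additional calculation is needed beyond checking that the hypothesis $e<p-1$ is exactly what Raynaud requires, and that the passage from $n=1$ to general $n$ is handled by the same theorem (or by dévissage using the exact sequences $0\to\mathcal{A}[p]\to\mathcal{A}[p^n]\to\mathcal{A}[p^{n-1}]\to 0$, which are exact in the category of finite flat group schemes over $\mathcal{O}_K$ because $\mathcal{A}$ is an abelian scheme).
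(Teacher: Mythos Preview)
Your proposal is correct and follows essentially the same route as the paper's proof: invoke Theorem~\ref{BIG2} to identify the orthogonal complement of $\img(s_{p^n})$ with $\mathcal{H}om_{ffgps/\mathcal{O}_K}(\mathcal{A}[p^n],\mathcal{B}^\star[p^n])$, then apply Raynaud's full-faithfulness of the generic fiber functor under the hypothesis $e<p-1$ to conclude this is all of $\Hom_{G_K}(A[p^n],B^\star[p^n])$, and finish by perfectness of the Tate pairing.
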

\begin{proof} Raynaud (\cite{Raynaud}) proved that when $e<p-1$, the faithful functor $\mathcal{G}\rightarrow\mathcal{G}\times_{\mathcal{O}_K}K$ that sends a finite flat group scheme over $\mathcal{O}_K$ to its generic fiber is also full. Therefore, we have an isomorphism
$\mathcal{H}om_{ffgps/\mathcal{O}_{K}}(\mathcal{A}[p^{n}],\mathcal{B}^{\star}[p^{n}])\simeq\Hom_{G_K}(A[p^n],B^{\star}[p^n])$ and the corollary follows directly from theorem \ref{BIG2}.

\end{proof}
\subsection{Passing to the limit} In this subsection we want to discuss the limit behavior of the Galois symbol. Let us first assume that $A,B$ are any abelian varieties over $K$, not necessarily of good reduction. First, for positive integers $n,m\geq 1$ it is easy to show that we have a commutative diagram as follows,
\[ \xymatrix{
s_{mn}:K(K;A,B)\ar[r]\ar[rd]_{s_{m}} & H^2(K,A[mn]\otimes B[mn])\ar[d]^{n}\\
& H^2(K,A[m]\otimes B[m]).
}\] Therefore, we obtain a symbol map $K(K;A,B)\rightarrow\lim\limits_{\longleftarrow}H^2(K,A[m]\otimes B[m])$, where the limit extends through all positive integers $m$.

On the other hand, the Tate duality pairing $(\star)$ after passing to the limit becomes,
\[(\star')\;\lim\limits_{\longleftarrow}H^2(K,A[m]\otimes B[m])\times\lim\limits_{\longrightarrow}\Hom_{G_K}(A[m],B^{\star}[m])\rightarrow\Q/\Z.\]
Note that the pairing $(\star')$ is not necessarily perfect anymore. In particular, if the group  $\lim\limits_{\longleftarrow}H^2(K,A[m]\otimes B[m])$ has torsion, then its torsion  lies in the left kernel of the pairing.
\begin{lem} Let $T(A), T(B^\star)$ be the total Tate modules of $A,B^\star$ respectively. We have a canonical isomorphism $\lim\limits_{\longrightarrow}\Hom_{G_K}(A[m],B^{\star}[m])\simeq\Hom_{G_K}(T(A),T(B)).$
\end{lem}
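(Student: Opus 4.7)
The isomorphism is a routine exercise in passing to the limit, so the plan is to identify the natural transition maps in the system $\{\Hom_{G_K}(A[m], B^\star[m])\}_m$ and then apply the universal property of the Tate module. First I would fix, for each pair of positive integers $n \mid m$, the restriction map: given $f \in \Hom_{G_K}(A[m], B^\star[m])$, since $A[n] \subset A[m]$ is annihilated by $n$, its image $f(A[n])$ automatically lies in $B^\star[n]$. This defines a $G_K$-equivariant restriction homomorphism $\rho_{m,n}\colon \Hom_{G_K}(A[m], B^\star[m]) \to \Hom_{G_K}(A[n], B^\star[n])$, making the family an inverse system. Note that this suggests the symbol $\lim\limits_{\longrightarrow}$ in the statement (and in the preceding pairing $(\star')$) is a typo for $\lim\limits_{\longleftarrow}$; I would flag this and work throughout with the inverse limit, which is the only convention compatible with the discrete pairings $\Hom_{G_K}(A[m],B^\star[m])\times H^2(K,A[m]\otimes B[m])\to \Z/m$ passing to a $\Q/\Z$-valued pairing on the limits.

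Second, I would build the canonical map into $\Hom_{G_K}(T(A), T(B^\star))$. Recall that $T(A) \simeq \lim\limits_{\longleftarrow} A[m]$ with transition maps the multiplication-by-$(m/n)$ surjections $A[m] \twoheadrightarrow A[n]$; in particular the quotient $T(A)/mT(A) \simeq A[m]$ is canonical as a $G_K$-module. Since $B^\star[m]$ is $m$-torsion, every continuous $G_K$-equivariant homomorphism $T(A) \to B^\star[m]$ factors uniquely through $A[m]$, giving a natural identification
\[\Hom_{G_K}(T(A), B^\star[m]) \;\simeq\; \Hom_{G_K}(A[m], B^\star[m]).\]
The universal property of the inverse limit $T(B^\star) = \lim\limits_{\longleftarrow} B^\star[m]$ then yields
\[\Hom_{G_K}(T(A), T(B^\star)) \;\simeq\; \lim\limits_{\longleftarrow}\Hom_{G_K}(T(A), B^\star[m]) \;\simeq\; \lim\limits_{\longleftarrow}\Hom_{G_K}(A[m], B^\star[m]),\]
which is the desired isomorphism.

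The only genuine verification at the end is that the two families of transition maps agree under the chain above: namely, that post-composition by the surjection $B^\star[m] \twoheadrightarrow B^\star[n]$ on the middle term matches $\rho_{m,n}$ on the right. This is immediate from the commutative diagram $T(A) \twoheadrightarrow A[m] \twoheadrightarrow A[n]$ together with the factorization of an $m$-torsion-valued homomorphism through $A[m]$. I do not anticipate a substantive obstacle: the lemma is a formal consequence of the definition of the Tate module, and the main point requiring care is clarifying the direction-of-limit convention intended by the author.
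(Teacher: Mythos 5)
Your proof is correct and takes a genuinely different route from the paper's. You construct the isomorphism directly from the universal property of the Tate module: identify $\Hom_{G_K}(T(A),B^\star[m])$ with $\Hom_{G_K}(A[m],B^\star[m])$ via the canonical quotient $T(A)\twoheadrightarrow T(A)/m\simeq A[m]$, then pass to the inverse limit over $m$ using $T(B^\star)=\varprojlim B^\star[m]$. The paper instead rewrites $\Hom_{G_K}(A[m],B^\star[m])\simeq\Hom_{G_K}(A[m]\otimes B[m],\Q/\Z(1))$ by the Weil pairing and Hom--tensor adjunction, argues that the limit commutes with $G_K$-invariants, and then invokes the identity $\varprojlim(A[m]\otimes B[m])\simeq T(A)\otimes T(B)\simeq\widehat{\Z}^{\oplus 4d_1d_2}$ together with $\widehat{\Z}^\vee\simeq\Q/\Z$. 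Your argument is more elementary, avoids the Pontryagin-duality step, and makes the transition maps completely explicit.

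Your instinct to flag the direction of the limit is well placed, and in fact your version is the one that is provably correct. With the transition maps dual to the inverse system on $H^2$ (i.e.\ $f\mapsto (x\mapsto f(nx))$), the direct limit $\varinjlim\Hom_{G_K}(A[m],B^\star[m])$ is a \emph{torsion} group, isomorphic to $\Hom_{G_K}(T(A)\otimes T(B),\Q/\Z(1))$, and therefore cannot coincide with the (generally torsion-free, profinite) group $\Hom_{G_K}(T(A),T(B^\star))$ --- take $B=A^\star$, where the identity gives a non-torsion element. The inverse limit with your restriction maps is precisely what equals $\Hom_{G_K}(T(A),T(B^\star))$, and that is also what is actually used in the corollary: one reduces $f\in\Hom_{G_K}(T(A),T(B^\star))$ modulo each $m$ and pairs with $c_m$. (Note also that the paper's statement writes $T(B)$ where $T(B^\star)$ is intended; you correctly work with $T(B^\star)$ throughout.)

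One small inaccuracy in your framing, though, is the justification you give for the typo. You claim the inverse limit is ``the only convention compatible with the discrete pairings passing to a $\Q/\Z$-valued pairing on the limits.'' In fact the opposite is closer to the truth: the pairing $\varprojlim H^2\times\varinjlim\Hom\to\Q/\Z$ with the \emph{direct} limit is the standard Pontryagin duality on limits and is a perfectly well-defined $\Q/\Z$-valued pairing. The pairing against the inverse limit is not canonically a single element of $\Q/\Z$ (the values $\langle c_m,f_m\rangle_m\in\Z/m$ scale by $n^2$ as one passes from level $m$ to level $mn$), though the condition of being in the left kernel is still well defined, which is all the corollary needs. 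So the reason the statement needs $\varprojlim$ is not a convention issue about the target $\Q/\Z$, but simply that $\varinjlim$ yields a torsion group that is not $\Hom_{G_K}(T(A),T(B^\star))$. The substance of your proof is unaffected by this, but it is worth stating the diagnosis precisely.
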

\begin{proof} First we can write $\Hom_{G_K}(A[m],B^{\star}[m])\simeq
\Hom_{G_K}(A[m]\otimes B[m],\Q/\Z(1))$. Next we show that $\lim\limits_{\longrightarrow}$ commutes with $G_K$-invariants. This follows easily by the fact that the morphisms $A[m]\hookrightarrow A[mn]$ are injections and $\lim\limits_{\longrightarrow}$ is exact. The lemma will thus follow, if we show an isomorphism $\lim\limits_{\longrightarrow}\Hom(A[m]\otimes B[m],\Q/\Z(1))
\simeq\Hom(\lim\limits_{\longleftarrow}(A[m]\otimes B[m]),\Q/\Z(1))$. But the latter is true, since $\lim\limits_{\longleftarrow}(A[m]\otimes B[m])\simeq T(A)\otimes T(B)\simeq\widehat{\Z}^{\oplus 2d_1}\otimes\widehat{\Z}^{\oplus 2d_2}$ and we know that $\widehat{\Z}^\vee\simeq\Q/\Z$.

\end{proof}
\begin{cor} Let $A,B$ be abelian varieties over $K$ with good reduction. The image of the Galois symbol $K(K;A,B)\rightarrow\lim\limits_{\longleftarrow}H^2(K,A[m]\otimes B[m])$ lies on the left kernel of the pairing
\[(\star')\;\lim\limits_{\longleftarrow}H^2(K,A[m]\otimes B[m])\times\Hom_{G_K}(T(A),T(B^{\star}))\rightarrow\Q/\Z.\] Moreover, it is annihilated by a power of $p$.
\end{cor}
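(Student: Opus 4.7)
The corollary has two parts---left-kernel containment and $p$-power annihilation---and my plan is to deduce both from Theorem \ref{BIG2} at finite level together with Tate's theorem on $p$-divisible groups (\cite{Tate1}) in the limit. The guiding observation is that Tate's theorem forces the subgroup $\mathcal{H}om_{ffgps/\mathcal{O}_{K}}(\mathcal{A}[p^{n}],\mathcal{B}^{\star}[p^{n}])$ to exhaust $\Hom_{G_{K}}(A[p^{n}],B^{\star}[p^{n}])$ in the inverse limit, so the defect between them at finite level should be bounded uniformly in $n$.

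For the left-kernel containment, I fix $f\in\Hom_{G_{K}}(T(A),T(B^{\star}))$ and decompose into $\ell$-primary components. When $\ell\neq p$, the schemes $\mathcal{A}[\ell^{n}]$ and $\mathcal{B}^{\star}[\ell^{n}]$ are \'{e}tale over $\mathcal{O}_{K}$, so any $G_{K}$-homomorphism on generic fibers extends uniquely to a morphism of finite flat group schemes. For the $p$-component $f_{p}\colon T_{p}A\to T_{p}B^{\star}$, Tate's theorem applied to the good-reduction $p$-divisible groups $\mathcal{A}[p^{\infty}]$ and $\mathcal{B}^{\star}[p^{\infty}]$ yields a unique lift to a homomorphism of $p$-divisible groups over $\Spec(\mathcal{O}_{K})$; restricting to $p^{n}$-torsion produces a morphism of finite flat group schemes lifting $f_{p}\bmod p^{n}$. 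Combining the two cases, for every $m$ the reduction $f\bmod m$ lies in $\mathcal{H}om_{ffgps/\mathcal{O}_{K}}(\mathcal{A}[m],\mathcal{B}^{\star}[m])$, so Theorem \ref{BIG2} gives $\langle s_{m}(\alpha),f\bmod m\rangle=0$ for every $\alpha\in K(K;A,B)$; passing to the limit proves the first assertion.

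For the $p$-power annihilation, first invoke Raskind--Spiess (\cite{Ras/Sp}, Theorem~3.5): $K(K;A,B)$ is $m$-divisible whenever $\gcd(m,p)=1$, so the image of $K(K;A,B)$ in the inverse limit is concentrated in the $p$-primary component $\lim\limits_{\longleftarrow}H^{2}(K,A[p^{n}]\otimes B[p^{n}])$. Set $I_{n}:=\img(s_{p^{n}})$, $M_{n}:=\Hom_{G_{K}}(A[p^{n}],B^{\star}[p^{n}])$, $\Lambda_{n}:=\mathcal{H}om_{ffgps/\mathcal{O}_{K}}(\mathcal{A}[p^{n}],\mathcal{B}^{\star}[p^{n}])$, and let $M_{\infty}$, $\Lambda_{\infty}$ denote their inverse limits. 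By Theorem \ref{BIG2}, $\Lambda_{n}$ is the exact annihilator of $I_{n}$ under the perfect pairing $(\star)$, so $|I_{n}|=|M_{n}/\Lambda_{n}|$; it thus suffices to bound this uniformly in $n$. Tate's theorem gives $\Lambda_{\infty}\xrightarrow{\sim}M_{\infty}=\Hom_{G_{K}}(T_{p}A,T_{p}B^{\star})$, a finitely generated $\Z_{p}$-module, together with inclusions $\Lambda_{\infty}/p^{n}\hookrightarrow\Lambda_{n}\subseteq M_{n}$ satisfying $|\Lambda_{\infty}/p^{n}|=|M_{\infty}/p^{n}|$. Hence
\[ |M_{n}/\Lambda_{n}| \;=\; \frac{|M_{n}/(M_{\infty}/p^{n})|}{|\Lambda_{n}/(\Lambda_{\infty}/p^{n})|} \;\leq\; |M_{n}/(M_{\infty}/p^{n})|. \]
Applying $G_{K}$-cohomology to $0\to T\xrightarrow{p^{n}}T\to T/p^{n}\to 0$ with $T:=\Hom(T_{p}A,T_{p}B^{\star})$ identifies the right-hand side with $|H^{1}(G_{K},T)[p^{n}]|$. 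Since $H^{1}(G_{K},T)$ is a finitely generated $\Z_{p}$-module (by the standard finiteness of Galois cohomology of $p$-adic local fields), its torsion subgroup is finite and bounds $|H^{1}(G_{K},T)[p^{n}]|$ uniformly in $n$. Therefore $|I_{n}|$ is uniformly bounded, $\lim\limits_{\longleftarrow}I_{n}$ is finite, and the image is annihilated by a power of $p$. The main obstacle lies precisely in this uniform bound: converting Tate's limit isomorphism $\Lambda_{\infty}\simeq M_{\infty}$ into a finite-level estimate hinges on the finiteness of $H^{1}(G_{K},T)_{\mathrm{tors}}$, which, while standard, is the only non-formal input of the second part.
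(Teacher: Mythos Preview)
Your argument is correct. The first part (left-kernel containment) is essentially the same as the paper's: both reduce to Tate's theorem on $p$-divisible groups to see that every $G_K$-homomorphism of Tate modules comes from a morphism of $p$-divisible groups over $\mathcal{O}_K$, and then invoke Theorem~\ref{BIG2} at each finite level.

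For the $p$-power annihilation, you take a genuinely different route from the paper. The paper cites a result of Bondarko (\cite{Bond}): there is an integer $s$ depending \emph{only on the ramification index} $e$ of $K$ such that for every $f\in\Hom_{G_K}(A[p^n],B^\star[p^n])$ the multiple $p^s f$ extends to a morphism of finite flat group schemes. This immediately gives $p^s M_n\subset\Lambda_n$, hence $p^s I_n=0$ uniformly in $n$, with $s$ independent of $A,B$. Your approach instead bounds $|M_n/\Lambda_n|$ by $|H^1(G_K,T)[p^n]|$ via Tate's theorem and the long exact sequence for $0\to T\xrightarrow{p^n}T\to T/p^n\to 0$, and then appeals to the finite generation of $H^1(G_K,T)$ over $\Z_p$ to bound its torsion. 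This is more self-contained (it avoids Bondarko's fairly delicate result), but the $s$ you obtain depends on $A$ and $B$ through $|H^1(G_K,T)_{\mathrm{tors}}|$, not just on $e$. Since the stated corollary only asks for \emph{some} power of $p$, your proof suffices; the paper's use of Bondarko is what underlies the sharper formulation in Corollary~\ref{limit1} of the introduction, where $s$ is said to depend only on the ramification index.
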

\begin{proof} Let $\mathcal{H}_{A}=\lim\limits_{\longrightarrow}\mathcal{A}[p^n]$ be the $p$-divisible group corresponding to the N\'{e}ron model $\mathcal{A}$ of $A$, and similarly $\mathcal{H}_{B^\star}$. By the theorem of Tate (\cite{Tate1}) we have an isomorphism \[\Hom_{G_K}(T_p(A),T_p(B^{\star})\simeq\mathcal{H}om_{pdiv}(\mathcal{H}_A,\mathcal{H}_B^\star).\] Recall that for $m$ coprime to $p$ the Galois symbol $s_m$ is trivial. The first claim of the corollary then follows by this isomorphism and by theorem \ref{BIG2}.

For the second claim, we use a computation of Bondarko (\cite{Bond}). The latter shows that the functor $\mathcal{G}\rightarrow\mathcal{G}\times_{\mathcal{O}_K}K$ is "weakly full" in the following sense. If $f:A[p^n]\rightarrow B^\star[p^n]$ is a homomorphism of $G_K$-modules, there exists a nonnegative integer $s$ that depends only on the absolute ramification index of $K$ and a homomorphism $g:\mathcal{A}[p^n]\rightarrow\mathcal{B}^\star[p^n]$ of finite flat group schemes over $\mathcal{O}_K$ such that $p^s f$ coincides with $g$ after restriction to the generic fiber. This in particular implies, that the image of the Galois symbol is $p^s$-torsion and hence finite.

\end{proof} 
Using the relation with zero cycles described in section 3, we immediately get the following corollary. 
\begin{cor}\label{finite2} Let $K$ be a finite extension of $\Q_p$ with $p>3$. Let $A$ be an abelian variety of dimension $d$ over $K$ with good reduction. The  cycle map  to \'{e}tale cohomology,
$CH_{0}(A)\rightarrow H^{2d}_{et}(A,\Z_p(1)^{\otimes d})$, when restricted to the Albanese kernel $T(A)$, has finite image.
\end{cor}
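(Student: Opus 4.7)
The strategy is to factor the cycle map through the bottom piece of the Hochschild--Serre filtration, identify it at each finite level with the Galois symbol via Lemma \ref{symbcyc}, and then combine the $p$-power torsion statement from the preceding limit corollary with the fact that continuous $H^{2}$ is a finitely generated $\Z_{p}$-module.

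First, since $H^{2d}_{et}(A,\Z_{p}(1)^{\otimes d})$ is a $\Z_{p}$-module, it suffices to control the $p$-primary part of the image. As discussed in section 3, for each $n\geq 1$ the cycle map $c_{p^n}:CH_{0}(A)/p^n\to H^{2d}(A,\Z/p^n(d))$ sends $T(A)$ into the bottom piece $I^{2}_{n}\simeq H^{2}(K,\bigwedge^{2}A[p^{n}])$ of the Hochschild--Serre filtration. Passing to the inverse limit over $n$, and using that the transition maps on $\bigwedge^{2}A[p^{n}]$ are surjective (so Mittag--Leffler is satisfied and the limit commutes with cohomology), the cycle map $T(A)\to H^{2d}_{et}(A,\Z_{p}(d))$ factors through the continuous cohomology group
\[H^{2}_{cts}(K,\bigwedge\nolimits^{2}T_{p}A)\;\simeq\;\lim\limits_{\longleftarrow}H^{2}(K,\bigwedge\nolimits^{2}A[p^{n}]),\]
where $T_{p}A=\lim\limits_{\longleftarrow}A[p^{n}]$ is the $p$-adic Tate module of $A$.

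Next, by Lemma \ref{symbcyc}, at each finite level the image of $c_{p^n}|_{T(A)}$ agrees with the image of the Galois symbol $s_{p^n}:S_{2}(K;A)/p^{n}\to H^{2}(K,\bigwedge^{2}A[p^{n}])$. This identification is compatible with the transition maps, so after passing to the limit, the image of $T(A)$ in $H^{2}_{cts}(K,\bigwedge^{2}T_{p}A)$ coincides with the image of $S_{2}(K;A)\to H^{2}_{cts}(K,\bigwedge^{2}T_{p}A)$, which is a quotient of the image of $K(K;A,A)\to\lim\limits_{\longleftarrow}H^{2}(K,A[p^{n}]\otimes A[p^{n}])$. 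By the limit corollary (Corollary \ref{limit1}) applied to the pair $(A,A)$, this latter image is annihilated by $p^{s}$, where $s\geq 0$ depends only on the ramification index of $K$.

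Finally, the group $H^{2}_{cts}(K,\bigwedge^{2}T_{p}A)$ is a finitely generated $\Z_{p}$-module, by the standard finiteness results for continuous Galois cohomology of $p$-adic local fields applied to the free $\Z_{p}$-module $\bigwedge^{2}T_{p}A$. The $p^{s}$-torsion of any such module is finite, so the image of $T(A)$ in $H^{2}_{cts}(K,\bigwedge^{2}T_{p}A)$, and hence in $H^{2d}_{et}(A,\Z_{p}(1)^{\otimes d})$, is finite. The only real subtlety is the passage to the limit in Step 2 and the compatibility with Lemma \ref{symbcyc} in Step 3; once these are verified, everything else is formal. The hardest point is essentially already packaged in Corollary \ref{limit1}, whose proof in turn depends on the main theorem \ref{BIG2} together with Bondarko's ``weak fullness'' result for the generic fiber functor on finite flat group schemes.
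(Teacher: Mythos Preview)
Your proposal is correct and follows essentially the same approach as the paper, which simply states that the corollary follows ``immediately'' from the preceding limit corollary together with the relation to zero cycles from Section~3. You have spelled out the details that the paper leaves implicit---in particular the passage to the inverse limit via the Hochschild--Serre filtration and the observation that $H^{2}_{cts}(K,\bigwedge^{2}T_{p}A)$ is a finitely generated $\Z_{p}$-module so that its $p^{s}$-torsion is finite---but the underlying argument is the same.
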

\vspace{1pt}
\subsection{Examples of non trivial Galois symbol}
We finish this draft by describing examples of abelian varieties $A,B$ of specific reduction type such that the symbolic part, $H^2_s(K,A[p^n]\otimes B[p^n])$, is non trivial. We do not exhaust all possible cases, but rather we focus on generalizing cases that have been previously considered by other authors (\cite{Mur/Ram}, \cite{Hir3}). Our method is to describe more explicitly the inclusion \[\mathcal{H}om_{ffgps/\mathcal{O}_{K}}(\mathcal{A}[p^{n}],\mathcal{B}^{\star}[p^{n}])\subset
\Hom_{G_K}(A[p^n],B^{\star}[p^n]),\] using the decomposition of a finite flat group scheme into connected and \'{e}tale components. This method can apply to many more cases. The main philosophy is that when at least one of the finite flat group schemes $\mathcal{A}[p^{n}],\mathcal{B}^{\star}[p^{n}]$ has non trivial \'{e}tale quotient, this inclusion is very often strict when the base field $K$ is large enough.

In this section we will write for simplicity $\mathcal{H}om_{\mathcal{O}_{K}}(\mathcal{A}[p^{n}],\mathcal{B}^{\star}[p^{n}])$ to denote the group of homomorphisms of finite flat group schemes over $\Spec(\mathcal{O}_K)$.
\subsection*{Connected and \'{e}tale components} Let $\mathcal{G}$ be a finite flat group scheme over $\mathcal{O}_K$. It is well known (see for example \cite{Tate1}) that there is a short exact sequence of finite flat group schemes over $\mathcal{O}_K$,
\[0\rightarrow\mathcal{G}^\circ\rightarrow\mathcal{G}\rightarrow\mathcal{G}^{et}\rightarrow 0,\] where $\mathcal{G}^\circ$ is the connected component of the zero element of $\mathcal{G}$ and $\mathcal{G}^{et}$ is the \'{e}tale quotient.
\begin{notn} If $G=\mathcal{G}\times_{\mathcal{O}_K}K$ is the generic fiber of $\mathcal{G}$, we will use the notation $G^{\circ}:=\mathcal{G}^{\circ}\times_{\mathcal{O}_K}K$ and $G^{et}:=\mathcal{G}^{et}\times_{\mathcal{O}_K}K$.
\end{notn}
\subsection{The ordinary reduction Case}  In this section we consider abelian varieties $A,B$ over $K$ with good ordinary reduction. We first review the definition of this type of reduction.
\begin{defn} Let $A$ be an abelian variety over $K$ of dimension $d$. We say that $A$ has good ordinary reduction if it has good reduction and the special fiber
$\overline{A}:=\mathcal{A}\times_{\mathcal{O}_K}k$ is an ordinary abelian variety over the residue field $k$. This means that the group $\overline{A}[p^n](\overline{k})$ is the largest possible, namely there is an isomorphism $\overline{A}[p^n](\overline{k})\simeq(\Z/p^n)^{\oplus d}$.
\end{defn}
From now on we assume that the base field $K$ contains a primitive $p$-th root of unity. (Note that the Galois symbol is trivial otherwise).

The following lemma describes the structure of the finite flat group scheme $\mathcal{A}[p^n]$, at least in the case when $\mathcal{A}[p^n]$ is self dual (which is true for example when $A$ is the Jacobian variety of a smooth complete curve over $K$).
\begin{lem}\label{ordin} Assume $A$ is of dimension $d$ and has good ordinary reduction. Then the connected and \'{e}tale components of $\mathcal{A}[p^n]$ are both non trivial.  Assume additionally that the finite flat group scheme $\mathcal{A}[p^n]$ is self dual. In this case, there is a  finite unramified extension $L/K$ such that $\mathcal{A}[p^n]^{\circ}_{\mathcal{O}_L}\simeq(\mu_{p^n}^{\oplus d})_{\mathcal{O}_L}$ and $\mathcal{A}[p^n]^{et}_{\mathcal{O}_L}\simeq(\Z/p^n)_{\mathcal{O}_L}^{\oplus d}$.
\end{lem}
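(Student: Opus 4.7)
The plan is to handle the two claims separately, using standard facts about the connected-\'{e}tale sequence for finite flat group schemes over $\mathcal{O}_K$ and Cartier duality.

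For the first claim, I would use a rank-counting argument. The group scheme $\mathcal{A}[p^n]$ is finite flat over $\mathcal{O}_K$ of rank $p^{2nd}$. The \'{e}tale quotient $\mathcal{A}[p^n]^{et}$ is a finite \'{e}tale group scheme and is therefore determined by its special fiber, whose $\overline{k}$-points are exactly $\overline{A}[p^n](\overline{k})$; by the ordinary reduction hypothesis this is $(\Z/p^n)^{\oplus d}$, a group of order $p^{nd}$. Hence $\mathcal{A}[p^n]^{et}$ has rank $p^{nd}$, and so the connected component $\mathcal{A}[p^n]^{\circ}$ has rank $p^{nd}$ as well. Since $d\geq 1$, both components have rank at least $p^n>1$, and neither is trivial.

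For the second claim, I would first trivialize the \'{e}tale quotient over an unramified extension. The finite \'{e}tale group scheme $\mathcal{A}[p^n]^{et}$ over $\mathcal{O}_K$ corresponds to the finite $G_k$-module $\overline{A}[p^n](\overline{k})\simeq (\Z/p^n)^{\oplus d}$, where $G_k=\Gal(\overline{k}/k)$. The Galois action is encoded in a homomorphism $G_k\to GL_d(\Z/p^n)$, and since the target is finite, it factors through a finite quotient. Taking $L/K$ to be a finite unramified extension whose residue field trivializes this action gives $\mathcal{A}[p^n]^{et}_{\mathcal{O}_L}\simeq (\Z/p^n)^{\oplus d}_{\mathcal{O}_L}$.

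For the connected part, the key step is to invoke the self-duality assumption in combination with Cartier duality. Dualizing the connected-\'{e}tale exact sequence
\[0\to \mathcal{A}[p^n]^{\circ}\to \mathcal{A}[p^n]\to \mathcal{A}[p^n]^{et}\to 0\]
yields
\[0\to (\mathcal{A}[p^n]^{et})^{\vee}\to \mathcal{A}[p^n]^{\vee}\to (\mathcal{A}[p^n]^{\circ})^{\vee}\to 0.\]
The Cartier dual of an \'{e}tale group scheme is of multiplicative type, hence connected, and the Cartier dual of a connected group scheme of multiplicative type is \'{e}tale. Comparing with the original sequence, the uniqueness of the connected-\'{e}tale decomposition forces $(\mathcal{A}[p^n]^{et})^{\vee}\simeq \mathcal{A}[p^n]^{\circ}$ under the self-duality isomorphism $\mathcal{A}[p^n]^{\vee}\simeq \mathcal{A}[p^n]$. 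Base-changing to $\mathcal{O}_L$ and using Cartier duality's computation $((\Z/p^n)^{\oplus d})^{\vee}\simeq (\mu_{p^n})^{\oplus d}$ gives the claim.

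The main subtlety I would need to handle carefully is the dualization step: one must verify that Cartier duals of \'{e}tale (resp.\ connected multiplicative-type) group schemes remain connected (resp.\ \'{e}tale) over the mixed-characteristic base $\mathcal{O}_K$, which is automatic for the groups here since $\mathcal{A}[p^n]^{\circ}$ is of multiplicative type in the ordinary reduction case (its special fiber $\overline{A}[p^n]^{\circ}$ is the kernel of Frobenius on an ordinary abelian variety, which is of multiplicative type). This ordinary-reduction input is really what makes the duality swap connected and \'{e}tale components cleanly and makes the identification with $\mu_{p^n}^{\oplus d}$ possible.
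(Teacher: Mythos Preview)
Your proposal is correct and follows essentially the same route as the paper: reduce to the residue field to see that $\mathcal{A}[p^n]^{et}$ corresponds to the $G_k$-module $(\Z/p^n)^{\oplus d}$, trivialize it over a finite unramified extension, and then use self-duality plus Cartier duality to identify the connected component with $\mu_{p^n}^{\oplus d}$. The paper's proof is terser (it simply says ``the claim for the connected component follows by Cartier duality''), whereas you spell out the dualization of the connected-\'{e}tale sequence and note the subtlety that the swap of components requires $\mathcal{A}[p^n]^{\circ}$ to be of multiplicative type; this extra care is not needed if one argues instead by rank---$(\mathcal{A}[p^n]^{et}_{\mathcal{O}_L})^\vee\simeq\mu_{p^n}^{\oplus d}$ is connected of rank $p^{nd}$ and hence must equal the connected component---but your version is correct as written.
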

\begin{proof} The lemma follows by reduction to the residue field. Since $\overline{A}[p^n](\overline{k})\simeq
(\Z/p^n)^{\oplus d}$, the group scheme $\mathcal{A}[p^n]$ cannot be either connected or \'{e}tale. Moreover, we can consider a finite unramified extension $L$ over $K$ such that all the points of $\overline{A}[p^n]$ become $l$-rational, where $l$ is the residue field of $L$. This yields an isomorphism of finite flat group schemes over $\mathcal{O}_L$, $\mathcal{A}[p^n]^{et}_{\mathcal{O}_L}\simeq(\Z/p^n)_{\mathcal{O}_L}^{\oplus d}$. Since we assumed that $\mathcal{A}[p^n]$ is self dual, the claim for the connected component follows by Cartier duality.

\end{proof}

\begin{prop}\label{ordin2} Let $A,B$ be abelian varieties over $K$ with good ordinary reduction of dimensions $d_{1},d_{2}$ respectively. Assume that both $\mathcal{A}[p^n]$ and $\mathcal{B}[p^n]$ are self dual. The image of the inclusion $\mathcal{H}om_{\mathcal{O}_{K}}(\mathcal{A}[p^{n}],\mathcal{B}^{\star}[p^{n}])\subset\Hom_{G_K}(A[p^n],B^{\star}[p^n])$ is precisely the subgroup \[H:=\{f\in\Hom_{G_K}(A[p^n],B^{\star}[p^n]):f(A[p^n]^{\circ})\subset B^{\star}[p^n]^{\circ}\}.\]
\end{prop}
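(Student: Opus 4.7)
The plan is to prove the two inclusions separately. The forward inclusion $\mathcal{H}om_{\mathcal{O}_{K}}(\mathcal{A}[p^{n}],\mathcal{B}^{\star}[p^{n}]) \subset H$ is essentially formal: any morphism of finite flat group schemes over the local base $\mathcal{O}_K$ sends the connected component of the identity to the connected component of the identity, and this property persists on the generic fiber. So I would dispense with this direction in one sentence.

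For the reverse inclusion, I would first pass to an unramified extension $L/K$ provided by lemma \ref{ordin}, so that both $\mathcal{A}[p^{n}]_{\mathcal{O}_L}$ and $\mathcal{B}^{\star}[p^{n}]_{\mathcal{O}_L}$ fit into connected-étale exact sequences with connected parts $\mu_{p^n,\mathcal{O}_L}^{d_i}$ and étale quotients $(\Z/p^n)^{d_i}_{\mathcal{O}_L}$; here I use that self-duality of $\mathcal{B}[p^n]$ identifies $\mathcal{B}^{\star}[p^n]_{\mathcal{O}_L}$ with $\mathcal{B}[p^n]_{\mathcal{O}_L}$ via Cartier duality. Given $f\in H$, the induced maps $f^{\circ}$ on connected parts and $f^{et}$ on étale quotients each admit unique lifts $\tilde{f}^{\circ}, \tilde{f}^{et}$ to morphisms of flat group schemes over $\mathcal{O}_L$: for the connected part one has $\Hom_{\mathcal{O}_L}(\mu_{p^n}^{d_1},\mu_{p^n}^{d_2}) = \mathrm{Mat}_{d_2\times d_1}(\Z/p^n) = \Hom_{G_L}(\mu_{p^n}^{d_1},\mu_{p^n}^{d_2})$ (Cartier duality plus the fact that $G_L$ acts on all factors by the same cyclotomic character), and the étale analogue is immediate since both sides are constant after base change to $\mathcal{O}_L$.

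The heart of the argument is then obstruction theory. With $\tilde{f}^{\circ}$ and $\tilde{f}^{et}$ in hand, assembling them into a morphism $\tilde{f}_L: \mathcal{A}[p^n]_{\mathcal{O}_L} \to \mathcal{B}^{\star}[p^n]_{\mathcal{O}_L}$ compatible with the filtrations is governed by the class
\[
(\tilde f^{\circ})_*[\mathcal{A}[p^n]_{\mathcal{O}_L}] - (\tilde f^{et})^*[\mathcal{B}^{\star}[p^n]_{\mathcal{O}_L}] \;\in\; \Ext^1_{\mathcal{O}_L}\bigl((\Z/p^n)^{d_1},\mu_{p^n}^{d_2}\bigr),
\]
which by the Kummer sequence identifies with $(\mathcal{O}_L^\times/(\mathcal{O}_L^\times)^{p^n})^{d_1 d_2}$. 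Its image under the natural map to the generic analogue $(L^\times/(L^\times)^{p^n})^{d_1 d_2}$ is the corresponding obstruction for $f_L$, which vanishes because $f_L$ itself supplies a lift of $f^{\circ}$ and $f^{et}$. I expect the main technical point to be the injectivity of $\mathcal{O}_L^\times/(\mathcal{O}_L^\times)^{p^n} \hookrightarrow L^\times/(L^\times)^{p^n}$, which follows because a $p^n$-th root in $L$ of a unit is automatically a unit; this injectivity forces the integral obstruction to vanish as well. A parallel comparison of the torsors of lifts, both equal to $\mu_{p^n}(L)^{d_1 d_2}$, then produces a unique integral lift $\tilde{f}_L$ with generic fiber $f_L$.

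To conclude, I would observe that $\tilde{f}_L$, being canonically determined by the $G_K$-equivariant datum $f$, is automatically $\Gal(L/K)$-equivariant, and so descends via étale descent along $\mathcal{O}_L/\mathcal{O}_K$ to a morphism $\tilde{f}:\mathcal{A}[p^n] \to \mathcal{B}^{\star}[p^n]$ of flat group schemes over $\mathcal{O}_K$ with generic fiber $f$. The main obstacle beyond organizing the obstruction theory in the flat setting is precisely the integral-vs-generic $\Ext^1$ comparison in the paragraph above; once that is cleanly set up, the rest is a sequence of formal consequences of lemma \ref{ordin} together with basic functoriality of connected-étale sequences and étale descent.
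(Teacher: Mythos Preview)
Your proof is correct and takes a genuinely different route from the paper's. Both arguments dispose of the forward inclusion the same way and both reduce, via \'etale descent along the unramified extension $L/K$ of lemma~\ref{ordin}, to the situation where the connected and \'etale pieces are copies of $\mu_{p^n}$ and $\Z/p^n$. After that the strategies diverge.

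The paper organizes the reverse inclusion as a five-lemma comparison of two long exact sequences: one coming from the short exact sequence $0\to\Hom(A[p^n]^{et},B^\star[p^n])\to\widetilde{H}\to\Hom(A[p^n]^\circ,B^\star[p^n]^\circ)\to 0$ of $G_K$-modules, the other from its integral analogue of internal $\mathcal{H}om$-sheaves (whose right-exactness is obtained by first showing that the connected-\'etale sequence of $\mathcal{A}[p^n]$ \emph{splits} over some finite $\mathcal{O}_F$). The outer vertical maps $\beta,\varepsilon$ are checked to be isomorphisms by hand, and the remaining step is the injectivity $H^1_{fl}(\mathcal{O}_K,\mathcal{B}^\star[p^n])\hookrightarrow H^1(K,B^\star[p^n])$, which the paper deduces from the N\'eron-model vanishing $H^1_{fl}(\mathcal{O}_K,\mathcal{B}^\star)=0$.

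Your argument instead lifts $f^\circ$ and $f^{et}$ separately and measures the obstruction to assembling them as a difference of pushforward and pullback extension classes in $\Ext^1_{\mathcal{O}_L}((\Z/p^n)^{d_1},\mu_{p^n}^{d_2})$, then kills the obstruction via the elementary injection $\mathcal{O}_L^\times/(\mathcal{O}_L^\times)^{p^n}\hookrightarrow L^\times/(L^\times)^{p^n}$ and matches torsors of lifts using $\mu_{p^n}(\mathcal{O}_L)=\mu_{p^n}(L)$. This avoids both the splitting argument and the appeal to $H^1_{fl}(\mathcal{O}_K,\mathcal{B}^\star)=0$, at the cost of setting up the obstruction formalism explicitly. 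One small imprecision: the full fppf $\Ext^1(\Z/p^n,\mu_{p^n})$ has an extra summand $\mu_{p^n}(\mathcal{O}_L)$ beyond $\mathcal{O}_L^\times/(\mathcal{O}_L^\times)^{p^n}$, but since the relevant extensions are all killed by $p^n$ the obstruction actually lands in the subgroup you name, and in any case that extra summand also injects into its generic counterpart, so the argument is unaffected.
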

\begin{proof} The inclusion $\mathcal{H}om_{\mathcal{O}_{K}}(\mathcal{A}[p^{n}],\mathcal{B}^{\star}[p^{n}])\subset H$ follows from the more general fact that  there are no non trivial homomorphisms from a connected to an \'{e}tale group scheme over $\mathcal{O}_K$. It suffices to show the other inclusion after a base change to a finite extension $L$ of $K$. For, by finite flat descent we obtain an isomorphism
\[\mathcal{H}om_{\mathcal{O}_K}(\mathcal{A}[p^{n}] ,\mathcal{B}^{\star}[p^{n}])\simeq(\mathcal{H}om_{\mathcal{O}_L}(\mathcal{A}[p^{n}]\times_{\mathcal{O}_K} \mathcal{O}_{L},\mathcal{B}^{\star}[p^{n}]\times_{\mathcal{O}_K} \mathcal{O}_{L})^{\Gal(L/K)}.\]

We may therefore assume that we are in the set up of lemma \ref{ordin} and we have short exact sequences of finite flat group schemes over $\mathcal{O}_K$,
$0\rightarrow\mathcal{A}[p^n]^{\circ}\rightarrow\mathcal{A}[p^n]\rightarrow\mathcal{A}[p^n]^{et}\rightarrow 0$
with $\mathcal{A}[p^n]^{\circ}\simeq(\mu_{p^n})_{\mathcal{O}_K}^{\oplus d_{1}}$ and $\mathcal{A}[p^n]^{et}\simeq(\Z/p^n)_{\mathcal{O}_K}^{\oplus d_1}$ and similarly for $\mathcal{B}^\star[p^n]$.
Next, we can easily see by counting dimensions that we have a short exact sequence of $G_K$-modules
\[0\rightarrow\Hom(A[p^n]^{et},B^{\star}[p^{n}])\rightarrow\widetilde{H}
\rightarrow\Hom(A[p^n]^{\circ},B^{\star}[p^n]^{\circ})\rightarrow 0,\] where $\widetilde{H}=\{f\in\Hom(A[p^n],B^{\star}[p^n]):f(A[p^n]^{\circ})\subset B^{\star}[p^n]^{\circ}\}$.

\underline{Claim:} There is a short exact sequence of flat sheaves over $\mathcal{O}_{\overline{K}}$,
\begin{eqnarray*}&&0\rightarrow\mathcal{H}om_{\mathcal{O}_{\overline{K}}}(\mathcal{A}[p^n]^{et},\mathcal{B}^{\star}[p^{n}])\rightarrow
\mathcal{H}om_{\mathcal{O}_{\overline{K}}}(\mathcal{A}[p^{n}],\mathcal{B}^{\star}[p^{n}])\rightarrow
\mathcal{H}om_{\mathcal{O}_{\overline{K}}}(\mathcal{A}[p^n]^\circ,\mathcal{B}^{\star}[p^n]^{\circ})\rightarrow 0.
\end{eqnarray*} To prove the claim we apply the left exact functor $\mathcal{H}om_{\mathcal{O}_{\overline{K}}}(-,\mathcal{B}^{\star}[p^{n}])$ to the short exact sequence of finite flat sheaves over $\mathcal{O}_{\overline{K}}$,
$0\rightarrow\mathcal{A}^{\star}[p^{n}]^{\circ}\rightarrow \mathcal{A}^{\star}[p^{n}]\rightarrow\mathcal{A}^{\star}[p^{n}]^{et}\rightarrow 0$. Notice that the latter sequence splits. For, there is finite extension $F\supset K$ such that $A[p^n]\subset A(F)$. Hence, over $\mathcal{O}_F$ we have a splitting $A[p^n]\simeq A[p^n]^{\circ}\oplus A[p^n]^{et}$. Since $A[p^n]^{et}\simeq(\Z/p^n)^{\oplus d_{1}}$, the splitting $A[p^n]^{et}\hookrightarrow A[p^n]$ corresponds to $d_{1}$ points of $ A[p^n](F)$. By the N\'{e}ron model property we get corresponding sections of $\mathcal{A}[p^n](\mathcal{O}_F)$ and hence there is a splitting of finite flat group schemes over $\mathcal{O}_F$, $\mathcal{A}[p^n]\simeq\mathcal{A}[p^n]^{et}\oplus\mathcal{A}[p^n]^{\circ}$.

The two short exact sequences from above induce long exact sequences as follows,
\[ \xymatrix{
\mathcal{H}om_{\mathcal{O}_K}(\mathcal{A}[p^n]^{et},\mathcal{B}^{\star}[p^{n}])\ar[r]\ar[d]_{\beta} &
\mathcal{H}om_{\mathcal{O}_K}(\mathcal{A}[p^{n}] ,\mathcal{B}^{\star}[p^{n}])\ar[r]\ar[d]_{\gamma}& \mathcal{H}om_{\mathcal{O}_K}(\mathcal{A}[p^n]^{\circ},\mathcal{B}^{\star}[p^{n}]^{\circ})\ar[r]\ar[d]_{\varepsilon}&\dots\\
\Hom_{G_K}(A[p^n]^{et},B^{\star}[p^{n}])\ar[r] & H\ar[r] & \Hom_{G_K}(A[p^n]^{\circ},B[p^n]^{\circ})\ar[r] & \dots
}
\]
The next piece of the diagram is the map \[H^{1}_{fl}(\mathcal{O}_K,\mathcal{H}om(\mathcal{A}[p^n]^{et},\mathcal{B}^{\star}[p^{n}]))\stackrel{\eta}{\longrightarrow} H^{1}(K,\Hom(A[p^n]^{et},B^{\star}[p^{n}])).\] The maps $\beta,\gamma,\varepsilon$ are obtained by restriction to the generic fiber, so they are all inclusions. We claim that $\beta$ and $\varepsilon$ are in fact isomorphisms.
For, we have isomorphisms
\begin{eqnarray*}&&\Hom_{G_K}(A[p^n]^{et},B^{\star}[p^{n}])\simeq\bigoplus_{d_{1}}\Hom_{G_K}(\Z/p^n,B^{\star}[p^{n}])
\simeq\bigoplus_{d_{1}}B^{\star}[p^{n}](K)\\
&&\mathcal{H}om_{\mathcal{O}_K}(\mathcal{A}[p^n]^{et},\mathcal{B}^{\star}[p^{n}])\simeq\bigoplus_{d_{1}}
\mathcal{H}om_{\mathcal{O}_K}(\Z/p^n,\mathcal{B}^{\star}[p^{n}])\simeq\bigoplus_{d_{1}} \mathcal{B}^{\star}[p^{n}](\mathcal{O}_K).
\end{eqnarray*} The last two groups are isomorphic by the N\'{e}ron model property, and therefore $\beta$ is an isomorphism. The claim for $\varepsilon$ follows similarly by using Cartier duality. We have therefore reduced the problem to  showing an inclusion \[H^{1}_{fl}(\mathcal{O}_K,\mathcal{H}om(\mathcal{A}[p^n]^{et},\mathcal{B}^{\star}[p^{n}]))
\stackrel{\eta}{\hookrightarrow} H^{1}(K,\Hom(A[p^n]^{et},B^{\star}[p^{n}])),\] which can be rewritten as $\bigoplus_{d_1}H^{1}_{fl}(\mathcal{O}_K,\mathcal{B}^{\star}[p^{n}])\stackrel{\eta}{\hookrightarrow} \bigoplus_{d_1}H^{1}(K,B^{\star}[p^{n}]).$ It therefore suffices to prove an inclusion $H^{1}_{fl}(\mathcal{O}_K,\mathcal{B}^{\star}[p^{n}])\hookrightarrow H^{1}(K,B^{\star}[p^{n}])$. This follows by the vanishing of $H^1_{fl}(\mathcal{O}_K,\mathcal{B}^\star)$ (see for example the appendix of \cite{Mur/Ram}), which yields an isomorphism \[H^{1}_{fl}(\mathcal{O}_K,\mathcal{B}^{\star}[p^{n}])\simeq\mathcal{B}^{\star}(\mathcal{O}_K)/p^n
\simeq B^\star(K)/p^n\hookrightarrow H^1(K,B^\star[p^n]).\]

\end{proof}
The following corollary generalizes the results of Murre and Ramakrishnan (\cite{Mur/Ram}) and Hiranouchi and Hirayama (\cite{Hir3}) to higher dimensional abelian varieties.
\begin{cor}\label{ordin3} Let $A$, $B$ be abelian varieties over $K$ with good ordinary reduction. Assume that all the $p^n$ torsion points of $A$ and $B$ are $K$-rational. Then the symbolic subgroup $H^2_s(K,A[p^n]\otimes B[p^n])$  of $H^2(K,A[p^n]\otimes B[p^n])$ is isomorphic to $(\Z/p^n)^{d_1d_2}$.
\end{cor}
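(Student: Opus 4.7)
By Theorem \ref{BIG2}, the Tate pairing $(\star)$ is a perfect duality between the finite abelian groups $H^2(K,A[p^n]\otimes B[p^n])$ and $\Hom_{G_K}(A[p^n],B^\star[p^n])$, and the exact annihilator of the symbolic subgroup $H^2_s$ is $\mathcal{H}om_{ffgps/\mathcal{O}_K}(\mathcal{A}[p^n],\mathcal{B}^\star[p^n])$. Perfectness then supplies a canonical isomorphism
\[
H^2_s(K,A[p^n]\otimes B[p^n])\;\simeq\;\Hom_{\Z}\!\Bigl(\Hom_{G_K}(A[p^n],B^\star[p^n])\big/\mathcal{H}om_{ffgps/\mathcal{O}_K}(\mathcal{A}[p^n],\mathcal{B}^\star[p^n]),\,\Z/p^n\Bigr),
\]
so the problem reduces to computing the quotient on the right.

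Next, I would apply Proposition \ref{ordin2} (or the mild extension of it needed here) to identify the denominator with the subgroup $H=\{f:f(A[p^n]^\circ)\subset B^\star[p^n]^\circ\}$. The self-duality hypothesis in Proposition \ref{ordin2} is invoked only through Lemma \ref{ordin} in order to realise $\mathcal{A}[p^n]^\circ$ and $\mathcal{B}^\star[p^n]^\circ$ as powers of $\mu_{p^n}$; under our stronger assumption that $A[p^n]$ and $B[p^n]$ are $K$-rational, which in particular forces $\mu_{p^n}\subset K$ via the Weil pairing, this toric structure follows instead from the Serre--Tate theory of ordinary $p$-divisible groups, and the remainder of that proof (vanishing of $H^1_{fl}(\mathcal{O}_K,\mathcal{B}^\star)$, the splitting of the connected-\'etale sequences via the N\'eron model property, already over $\mathcal{O}_K$ itself, and Cartier duality for counting) carries over unchanged.

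Finally, I compute the quotient explicitly. Since $G_K$ acts trivially on $A[p^n]$, $B[p^n]$, and $\mu_{p^n}$, it also acts trivially on $B^\star[p^n]\simeq\Hom(B[p^n],\mu_{p^n})$, so $\Hom_{G_K}=\Hom$. Ordinary reduction yields free $\Z/p^n$-modules $A[p^n]^\circ$ and $A[p^n]^{et}$ of rank $d_1$, and similarly of rank $d_2$ for $B^\star$, and the connected-\'etale sequences split as sequences of trivial Galois modules. The resulting block decomposition exhibits $H$ as the kernel of the projection onto the single block $\Hom(A[p^n]^\circ,B^\star[p^n]^{et})\simeq(\Z/p^n)^{d_1d_2}$, so the quotient is $(\Z/p^n)^{d_1d_2}$, whose Pontryagin dual with respect to $\Z/p^n$ is again $(\Z/p^n)^{d_1d_2}$. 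The main point requiring care is the extension of Proposition \ref{ordin2} described in the second paragraph; once that is in place, the rest of the argument is routine linear algebra over $\Z/p^n$.
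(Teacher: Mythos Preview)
Your proposal is correct and follows essentially the same approach as the paper: invoke Theorem \ref{BIG2}, identify the orthogonal complement with the subgroup $H$ of Proposition \ref{ordin2}, and compute the quotient $\Hom_{G_K}/H\simeq(\Z/p^n)^{d_1d_2}$ by the block decomposition coming from the split connected--\'etale sequences. Your extra care in justifying why Proposition \ref{ordin2} applies without the self-duality hypothesis (using the $K$-rationality of $A[p^n]$ and $B[p^n]$ to obtain the toric structure directly) is a legitimate point that the paper's proof leaves implicit.
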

\begin{proof} By the semi-simplicity of the $G_K$ action on $A[p^n]$, $B[p^n]$, we get an isomorphism  \[\Hom_{G_K}(A[p^n],B^{\star}[p^n])\simeq(\Z/p^n)^{4d_1d_2},\] while the group $H$ of proposition \ref{ordin2} is isomorphic to $(\Z/p^n)^{3d_1d_2}$. The corollary then follows by theorem \ref{BIG2}.

\end{proof}
\begin{rem}\label{smallp} Proposition \ref{ordin2} combined with theorem \ref{BIG2} yield an isomorphism,  \[H^2_s(K,A[p^n]\otimes B[p^n])\simeq \img(H^2(K,A^{\circ}[p^n]\otimes B^{\circ}[p^n])\rightarrow H^2(K,A[p^n]\otimes B[p^n])).\] The inclusion $(\subset)$ follows easily by the fact that the image of the Galois symbol and the group $\mathcal{H}om_{\mathcal{O}_{K}}(\mathcal{A}[p^{n}],\mathcal{B}^{\star}[p^{n}])$ annihilate each other under the Tate duality pairing.

One can prove the inclusion $(\supset)$ directly, without the use of $p$-adic Hodge theory. This is closer to the approach used by previous authors. Namely, one can show that the image of the Galois symbol covers all of $H^2(K,A^{\circ}[p^n]\otimes B^{\circ}[p^n])$ by using the following fact. If $L/K$ is a finite extension such that $A[p^n]^{\circ}\simeq\mu_{p^n}^{\oplus d_1}$ and $B[p^n]^{\circ}\simeq\mu_{p^n}^{\oplus d_2}$, then
the cup product
\[H^1_{fl}(O_L, A[p^n]^{\circ})\otimes H^1_{fl}(O_L, B[p^n]^{\circ})\to H^2(L, \bigoplus_{d_1 d_2}\mu_{p^n}\otimes \mu_{p^n})\]
is identified with some copies of the Hilbert symbol map \[O_L^\times/(O_L^\times)^{p^n} \otimes O_L^\times/(O_L^\times)^{p^n}\to H^2(L, \mu_{p^n}\otimes \mu_{p^n})\]
which is known to be surjective by local class field theory. This computation is unconditional on the prime $p$, which makes us believe that theorem \ref{BIG2} should be true even for small primes.
\end{rem}
\vspace{1pt}
\subsection{Ordinary and Supersingular} We next consider the case when $A$ has ordinary reduction, while $B^\star$ has supersingular reduction.

We note that $B^\star$ having supersingular reduction means that its reduction $\overline{B}^\star$ to the residue field $k$ has the property $\overline{B}^\star[p^n](\overline{k})=0$, for every $n\geq 1$. In particular, the finite flat group scheme $\mathcal{B}^\star[p^n]$ is connected. (This follows by the classification of the simple finite flat group schemes over $k$). We next give the analogue of  proposition \ref{ordin2} for this case.
\begin{prop}\label{super} Let $A, B$ be  abelian varieties over $K$ of dimensions $d_1,d_2$  respectively. We assume that both $\mathcal{A}[p^n],\mathcal{B}[p^n]$ are self dual, $A$ has good ordinary reduction and $B$ has good supersingular reduction. If the base field $K$ is large enough so that we are in the set up of lemma \ref{ordin}, then we have canonical isomorphisms
\[\mathcal{H}om_{\mathcal{O}_{K}}(\mathcal{A}[p^{n}],\mathcal{B}^{\star}[p^{n}])\simeq
\mathcal{H}om_{\mathcal{O}_{K}}(\mathcal{A}[p^{n}]^{et},\mathcal{B}^{\star}[p^{n}])
\simeq\Hom_{G_K}(A[p^n]^{et},B^{\star}[p^n]).\]
\end{prop}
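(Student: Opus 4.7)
The plan is to exploit the connected-\'etale short exact sequence
\[0\longrightarrow\mathcal{A}[p^n]^{\circ}\longrightarrow\mathcal{A}[p^n]\longrightarrow\mathcal{A}[p^n]^{et}\longrightarrow 0\]
of finite flat group schemes over $\mathcal{O}_K$, apply the left-exact functor $\mathcal{H}om_{\mathcal{O}_K}(-,\mathcal{B}^{\star}[p^n])$, and reduce the first claimed isomorphism to the vanishing $\mathcal{H}om_{\mathcal{O}_K}(\mathcal{A}[p^n]^{\circ},\mathcal{B}^{\star}[p^n])=0$. Granting this, the second isomorphism is then proved as in proposition \ref{ordin2}: the identification $\mathcal{A}[p^n]^{et}\simeq(\Z/p^n)^{\oplus d_1}_{\mathcal{O}_K}$ from lemma \ref{ordin} gives
\[\mathcal{H}om_{\mathcal{O}_K}(\mathcal{A}[p^n]^{et},\mathcal{B}^{\star}[p^n])\simeq\mathcal{B}^{\star}[p^n](\mathcal{O}_K)^{\oplus d_1}\simeq B^{\star}[p^n](K)^{\oplus d_1}\simeq\Hom_{G_K}(A[p^n]^{et},B^{\star}[p^n]),\]
where the middle isomorphism is the N\'eron model property and the outer ones are obvious.

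The key input for the vanishing is that supersingular reduction forces $\mathcal{B}^{\star}[p^n]$ to be ``local-local'' over $\mathcal{O}_K$, in the sense that both $\mathcal{B}^{\star}[p^n]$ and its Cartier dual are connected. Indeed, supersingularity of $\overline{B}$ means $\overline{B}[p^n](\overline{k})=0$, so $\mathcal{B}[p^n]$ is connected over $\mathcal{O}_K$ because connectedness of a finite flat group scheme over the henselian local ring $\mathcal{O}_K$ is detected on the special fiber. Supersingularity is preserved under passage to the dual abelian variety (the Newton slopes of $B[p^{\infty}]$ are all $1/2$, and this is symmetric under duality); alternatively, the self-duality hypothesis identifies $\mathcal{B}^{\star}[p^n]$ with $\mathcal{B}[p^n]$. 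Combined with $\mathcal{A}[p^n]^{\circ}\simeq(\mu_{p^n})^{\oplus d_1}_{\mathcal{O}_K}$, Cartier duality then gives
\[\mathcal{H}om_{\mathcal{O}_K}(\mu_{p^n},\mathcal{B}^{\star}[p^n])\simeq\mathcal{H}om_{\mathcal{O}_K}(\mathcal{B}[p^n],\Z/p^n)=0,\]
the final equality because no nonzero morphism from a connected finite flat group scheme to an \'etale one over $\mathcal{O}_K$ can exist: the image must be contained in a single \'etale section, which must then be the identity by the group homomorphism property.

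The step I expect to be the main obstacle is the careful verification that $\mathcal{B}^{\star}[p^n]$ is local-local over all of $\mathcal{O}_K$ rather than merely over the residue field. The special-fiber statement is classical, but promoting it to a statement about finite flat group schemes over $\mathcal{O}_K$ requires citing (i) the existence of the connected-\'etale sequence over the henselian base $\mathcal{O}_K$, (ii) the fact that connectedness of a finite flat $\mathcal{O}_K$-group scheme is equivalent to connectedness of its special fiber, and (iii) the compatibility of Cartier duality with reduction modulo $p$. Once these standard facts are in place, the Cartier-duality computation above becomes routine and the two isomorphisms drop out of the exact sequence.
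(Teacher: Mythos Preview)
Your proposal is correct and follows essentially the same route as the paper. Both arguments reduce the first isomorphism to the vanishing $\mathcal{H}om_{\mathcal{O}_K}(\mathcal{A}[p^n]^{\circ},\mathcal{B}^{\star}[p^n])=0$, establish this by Cartier duality (the paper dualizes a putative nonzero $f$ to get $f^{\star}:\mathcal{B}[p^n]\to\mathcal{A}[p^n]^{et}$, which is connected-to-\'etale and hence zero; you do the equivalent computation $\mathcal{H}om(\mu_{p^n},\mathcal{B}^{\star}[p^n])\simeq\mathcal{H}om(\mathcal{B}[p^n],\Z/p^n)=0$), and then cite the proof of proposition~\ref{ordin2} for the second isomorphism. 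Your concern about promoting connectedness from the special fiber to $\mathcal{O}_K$ is valid but routine, and the paper simply asserts it; note also that you only need $\mathcal{B}[p^n]$ connected, not the full local-local property of $\mathcal{B}^{\star}[p^n]$.
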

\begin{proof} First we show the equality  $\mathcal{H}om_{\mathcal{O}_{K}}(\mathcal{A}[p^{n}],\mathcal{B}^{\star}[p^{n}])=
\mathcal{H}om_{\mathcal{O}_{K}}(\mathcal{A}[p^{n}]^{et},\mathcal{B}^{\star}[p^{n}])
$. It suffices to show that if $f:\mathcal{A}[p^n]^\circ\rightarrow\mathcal{B}^\star[p^n]$ is a homomorphism over $\mathcal{O}_K$, then $f=0$. Indeed, if $f$ is non trivial, the dual homomorphism $f^\star:\mathcal{B}[p^n]\rightarrow\mathcal{A}[p^n]^{et}$ will also be non trivial. This is a contradiction, since $\mathcal{B}[p^n]$ is connected, while $\mathcal{A}[p^n]^{et}$ is \'{e}tale.

The second isomorphism was already shown as part of the proof of proposition \ref{ordin2}.

\end{proof}
We close this section with the following generalization of the computation in \cite{Hir3}.
\begin{cor} Assume we are in the set up of proposition \ref{super}. Moreover, assume that all the $p^n$-torsion points of $A,B$ are $K$-rational. Then the symbolic subgroup $H^2_s
(K,A[p^n]\otimes B[p^n])$ of $H^2(K,A[p^n]\otimes B[p^n])$ is isomorphic to $(\Z/p^n)^{2d_1d_2}$.
\end{cor}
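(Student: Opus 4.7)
The plan is a direct application of Theorem \ref{BIG2} together with the identification of $\mathcal{H}om_{\mathcal{O}_K}(\mathcal{A}[p^n],\mathcal{B}^\star[p^n])$ provided by Proposition \ref{super}, followed by a simple rank computation under the perfect Tate pairing $(\star)$.

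First, I would record the ranks of the Galois modules in play. Since $A[p^n]\subset A(K)$ and $B[p^n]\subset B(K)$, the Weil pairing forces $\mu_{p^n}\subset K$; consequently both $A[p^n]$ and $B^\star[p^n]\simeq\Hom(B[p^n],\mu_{p^n})$ are trivial $G_K$-modules, free over $\Z/p^n$ of ranks $2d_1$ and $2d_2$ respectively. Therefore $\Hom_{G_K}(A[p^n],B^\star[p^n])\simeq(\Z/p^n)^{4d_1d_2}$, and by local Tate duality the finite group $H^2(K,A[p^n]\otimes B[p^n])$ is canonically its $\Z/p^n$-dual, hence also free of rank $4d_1d_2$.

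Next, I would invoke Proposition \ref{super}, which identifies $\mathcal{H}om_{\mathcal{O}_K}(\mathcal{A}[p^n],\mathcal{B}^\star[p^n])$ with $\Hom_{G_K}(A[p^n]^{et},B^\star[p^n])$. By Lemma \ref{ordin}, the \'{e}tale quotient $A[p^n]^{et}$ has $\Z/p^n$-rank $d_1$, so this group is abstractly $(\Z/p^n)^{2d_1d_2}$. Moreover it is a direct summand of $\Hom_{G_K}(A[p^n],B^\star[p^n])$: the connected-\'{e}tale sequence $0\to A[p^n]^\circ\to A[p^n]\to A[p^n]^{et}\to 0$ is a sequence of trivial $G_K$-modules (because $A[p^n]\subset A(K)$) and therefore splits, and applying $\Hom(-,B^\star[p^n])$ yields a split exact sequence whose first term is the claimed subgroup.

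Finally, Theorem \ref{BIG2} says that $H^2_s(K,A[p^n]\otimes B[p^n])$ is the annihilator of $\mathcal{H}om_{\mathcal{O}_K}(\mathcal{A}[p^n],\mathcal{B}^\star[p^n])$ under $(\star)$. Under a perfect pairing between two free $\Z/p^n$-modules of rank $N$, the annihilator of a free direct summand of rank $r$ is a free direct summand of rank $N-r$. Here $N=4d_1d_2$ and $r=2d_1d_2$, giving $H^2_s\simeq(\Z/p^n)^{2d_1d_2}$, as claimed. There is essentially no conceptual obstacle in this argument; the only mildly delicate point is to ensure that the subgroup whose annihilator we compute is a direct summand, which we have arranged by using the $K$-rationality of $A[p^n]$ to split the connected-\'{e}tale sequence.
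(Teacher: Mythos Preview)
Your proof is correct and follows the same approach as the paper: compute the rank of $\Hom_{G_K}(A[p^n],B^\star[p^n])$ as $4d_1d_2$ using the $K$-rationality assumption, identify the flat $\mathcal{H}om$ via Proposition \ref{super} as a free $\Z/p^n$-module of rank $2d_1d_2$, and deduce the rank of the annihilator via Theorem \ref{BIG2} and perfectness of $(\star)$. The paper's own proof is simply ``analogous to the proof of Corollary \ref{ordin3}'', which is precisely this rank count; you have written out the details (including the direct-summand observation, which is a harmless extra since freeness of the quotient already forces the annihilator to be free of the complementary rank).
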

\begin{proof} The proof is analogous to the proof of corollary \ref{ordin3}.
\end{proof}
\begin{rem} Similarly to the remark \ref{smallp}, one can prove theorem \ref{BIG2} directly in this case imitating the explicit Hilbert symbol computations of S. Bloch in \cite{Bloch2} (pg 247).
\end{rem}
\bigskip
\appendix
\section{Tensor-Hom Compatibility} 
The following lemma gives the expected compatibility between the tensor product of two flitered free modules of weight 1 and the corresponding $\Hom$. 
\begin{lem} Let $M,M'$ be filtered-free modules of weight 1. There is a canonical isomorphism
\[\Hom_{\Mod_{[0,2]}^{\phi}}(\mathbf{1}_{1}[-1],M\otimes M')\simeq \Hom_{\Mod_{[0,1]}^{\phi}}(M^{\star},M').\]
\end{lem}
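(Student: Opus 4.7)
The plan is to set up the isomorphism at the level of underlying $S$-modules and then match filtration and Frobenius data in an adapted basis. Since $M$ is filtered-free, hence free of finite rank over $S$, the standard tensor--Hom adjunction gives an $S$-module isomorphism
\[
\Psi\colon M\otimes_S M'\xrightarrow{\sim} \Hom_S(M^\star, M'),\qquad m\otimes m'\longmapsto\bigl(\alpha\mapsto\alpha(m)\,m'\bigr).
\]
A morphism $f\in\Hom_{\Mod^\phi_{[0,2]}}(\mathbf{1}_1[-1], M\otimes M')$ is determined by $e:=f(1)\in M\otimes M'$, and I will show that the structure conditions on $f$ correspond, under $\Psi$, to the structure conditions defining the morphism $g_e:=\Psi(e)\in\Hom_{\Mod^\phi_{[0,1]}}(M^\star, M')$.

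For the filtration, the condition on $f$ unwinds to $\Fil^1 S\cdot e\subset\Fil^2(M\otimes M')$, which is equivalent to $e\in\Fil^1(M\otimes M')$ in the spirit of \autoref{twist}; the condition on $g_e$ is $g_e(\Fil^1 M^\star)\subset\Fil^1 M'$. Fix adapted bases $\{e_i\}$ for $M$ and $\{e'_j\}$ for $M'$ with integer sequences $\{r_i\},\{r'_j\}\subset\{0,1\}$, so that the dual basis $\{e_i^\ast\}$ is adapted to $M^\star$ with sequence $\{1-r_i\}$. Writing $e=\sum_{i,j} c_{ij}\, e_i\otimes e'_j$, and using that $\Fil^2 S=(\Fil^1 S)^2$ together with the colon identity $(\Fil^2 S:\Fil^1 S)=\Fil^1 S$ inside $S$, I would check that both filtration conditions translate to the single constraint $c_{ij}\in\Fil^1 S$ exactly when $r_i=r'_j=1$.

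For the Frobenius, the condition on $f$ becomes $\phi_0^{M\otimes M'}(e)=pe$ with $\phi_0=\phi_0^M\otimes\phi_0^{M'}$, while the condition on $g_e$ is $\phi_1^{M'}\circ g_e=g_e\circ\phi_1^{M^\star}$ on $\Fil^1 M^\star$. Using the new basis $f_i:=\phi_1(E(u)^{r_i} e_i)$ of $M$ from \autoref{phi-basis}, one derives $\phi_0^M(e_i)=p^{1-r_i} c^{-r_i} f_i$ (and similarly for $M'$), and the dual Frobenius is determined by $\phi_1^\star(\alpha)(f_i)=\phi_1(\alpha(E(u)^{r_i} e_i))$. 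Expanding both sides in the basis $\{f_i\otimes f'_j\}$ produces matching weight factors $p^{2-r_i-r'_j}\,c^{-r_i-r'_j}$, so that $\phi_0(e)=pe$ and $\phi_1^{M'} g_e=g_e \phi_1^{M^\star}$ yield the same system of $\sigma$-semilinear equations in the $c_{ij}$.

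The main obstacle is this last step: one must carefully reconcile the weight-$2$ normalization $\phi_0=p^2\phi_2$ on $M\otimes M'$ with the weight-$1$ Frobenius on $M^\star$ and $M'$, tracking the unit $c=\phi_1(E(u))$ and the $p$-powers arising from the differing Hodge weights $r_i$. As a consistency check, for $M=M'=\mathbf{1}_1$ (where $M^\star\simeq \mathbf{1}_1[-1]$) both sides of the asserted isomorphism reduce to $\{a\in\Fil^1 S:\phi_1(a)=a\}$, which gives the expected answer and confirms the recipe; the general case then follows by linearity once the coefficient-wise matching is established.
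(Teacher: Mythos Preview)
Your approach is essentially the same as the paper's: both set up the tensor--Hom adjunction $e\mapsto(\alpha\mapsto\alpha(m)m')$, verify the filtration match in adapted bases by the condition ``$c_{ij}\in\Fil^1 S$ iff $r_i=r'_j=1$'', and then verify Frobenius compatibility by rewriting $e$ in the new basis $\{\phi_1(E(u)^{r_i}e_i)\otimes\phi_1(E(u)^{r'_j}e'_j)\}$ and comparing coefficients. The paper carries out exactly the Frobenius computation you flag as the ``main obstacle'', producing the explicit table
\[
t_{ij}=\begin{cases} p\phi(s_{ij}) & r_i=r'_j=0,\\ c^{-1}\phi(s_{ij}) & r_i+r'_j=1,\\ c^{-2}\phi_1(s_{ij}) & r_i=r'_j=1,\end{cases}
\]
and then checking directly that this same table encodes $\phi_1^{M'}\circ g_e=g_e\circ\phi_1^{M^\star}$; your weight factors $p^{2-r_i-r'_j}c^{-r_i-r'_j}$ recover this (note $\phi_1(s_{ij})=p^{-1}\phi(s_{ij})$ in the last case). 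The only substantive difference is cosmetic: you phrase the Frobenius condition as $\phi_0(e)=pe$ while the paper uses $\phi_1(e)=e$, which are equivalent since $M\otimes M'$ is $p$-torsion-free.

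One small caution: your colon-ideal argument $(\Fil^2 S:\Fil^1 S)=\Fil^1 S$ is unnecessary and a bit delicate to justify directly; it is cleaner to invoke \autoref{twist} (as you already note) or to use the Faltings-style description $\Fil^1(M\otimes M')=\{x:E(u)x\in\Fil^2(M\otimes M')\}$, which immediately gives the equivalence you want.
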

\begin{proof} First notice that if we forget the derivation in lemma (\ref{twist}), we have an isomorphism
\[\Hom_{\Mod_{[0,2]}^{\phi}}(\mathbf{1}_{1}[-1],M\otimes M')\simeq\{e\in \Fil^{1}(M\otimes M'):\phi_{1}(e)=e\}.\] Since both $M$ and $M'$ are of weight 1, there exist adapted basis $\textbf{B}^{1}_{M}=\{e_{1},\cdots,e_{d}\}$ and $\textbf{B}^{1}_{M'}=\{w_{1},\cdots,w_{\nu}\}$ of $M,M'$ respectively
such that \[\Fil^{1}M=(\bigoplus_{i=1}^{d_{1}}S e_{i})\bigoplus(\bigoplus_{i=d_{1}+1}^{d}\Fil^{1}S e_{i}),\;\;\;\; Fil^{1}M'=(\bigoplus_{j=1}^{\nu_{1}}S w_{j})\bigoplus(\bigoplus_{j=\nu_{1}+1}^{\nu}\Fil^{1}S w_{j}).\] For $1\leq i\leq d$, we set $x_{i}=\phi_{1}(E(u)^{n_{i}}e_{i})$, where $n_{i}=0$, for $i\leq d_{1}$ and $n_{i}=1$ for $i>d_{1}$. Similarly we set $y_{j}=\phi_{1}(E(u)^{m_{j}}w_{j})$, where $m_{j}=0$, for $j\leq \nu_{1}$ and $m_{j}=1$, for $j>\nu_{1}$. By lemma (\ref{phi-basis}) we get that the set
$\textbf{B}^{2}_{M}=\{x_{i}\}_{1\leq i\leq d}$ (respectively
$\textbf{B}^{2}_{M'}=\{y_{j}\}_{1\leq j\leq \nu}$) forms another basis of $M$ (resp.  $M'$). We will first define a map
\[F:\{e\in \Fil^{1}(M\otimes M'):\phi_{1}(e)=e\}\longrightarrow\Hom_{\Mod_{[0,1]}^{\phi}}(M^{\star},M').\]
Let $e\in\Fil^{1}(M\otimes M')$. We can write $e$ uniquely in the form
$e=\sum_{i=1}^{d}\sum_{j=1}^{\nu} s_{ij}e_{i}\otimes w_{j},$
where $s_{ij}\in S$ and when $i>d_{1}$ and $j>\nu_{1}$ we moreover have $s_{ij}\in\Fil^{1}S$. For such an element $e$ we define the following homomorphism
\begin{eqnarray*}F_{e}:&&M^{\star}\longrightarrow M'\\&&
f\longrightarrow F_{e}(f)=\sum_{i,j}s_{ij}f(e_{i})w_{j}.
\end{eqnarray*}  The map $F_{e}$ is clearly $S$-linear and it is an easy linear computation that it does not depend on the choice of basis. Moreover, we can easily verify that $F_{e}$ preserves the filtration, namely $F_{e}(\Fil^{1}(M^{\star}))\subset\Fil^{1}(M)$. For, let $f\in\Fil^{1}(M^{\star})$. By definition this means that $f(\Fil^{1}M)\subset \Fil^{1}(S)$. We rewrite $F_{e}(f)=\sum_{j\leq \nu_{1}}\sum_{i}s_{ij}f(e_{i})w_{j}+\sum_{j>\nu_{1}}\sum_{i}s_{ij}f(e_{i})w_{j}$. By the choice of the base $\mathbf{B}^{1}_{M'}$, the first sum is clearly in $\Fil^{1}M'$. For the second sum we notice that if $i\leq d_{1}$, then $e_{i}\in\Fil^{1}M$ and hence $f(e_{i})\in\Fil^{1} S$, while if $i>d_{1}$ then $s_{ij}\in\Fil^{1}S$. So in every case we have the second sum also in $\Fil^{1}M'$.

\underline{Claim 1:} Assume in addition that $\phi_{1}(e)=e$. Then $F_{e}$ commutes with the Frobenius.\\ This claim is not so obvious, so we will prove it in great detail. We need to show that for every functional $f\in M^{\star}$, it holds $\phi_{1}(F_{e}(f))=F_{e}(\phi_{1}^{\star}(f))$. Using the basis $\mathbf{B}_{M}^{2}$ and $\mathbf{B}_{M'}^{2}$, we can write $e$ uniquely in the form $e=\sum_{i,j}t_{ij}x_{i}\otimes y_{j}$. We then compute,
\begin{eqnarray*}\phi_{1}(e)=&&\phi_{1}(\sum_{i,j}s_{ij}e_{i}\otimes w_{j})=\sum_{j,\;i\leq d_{1}}\phi_{1}(e_{i})\otimes\phi_{0}(s_{ij}w_{j})+\\
&&+\sum_{i>d_{1}, j\leq \nu_{1} }\phi_{0}(s_{ij}e_{i})\otimes\phi_{1}(w_{j})+\sum_{i>d_{1},j>\nu_{1}}\phi_{1}(s_{ij}e_{i})
\otimes\phi_{0}(w_{j})\\=&&\sum_{i\leq d_{1},j\leq \nu_{1}}\phi_{1}(e_{i})\otimes p\phi(s_{ij})\phi_{1}(w_{j})+\sum_{i\leq d_{1},j> \nu_{1}}\phi_{1}(e_{i})\otimes\phi(s_{ij})\frac{1}{c}\phi_{1}(E(u)w_{j})+\\
&&\sum_{i>d_{1},j\leq \nu_{1}}\frac{1}{c}\phi(s_{ij})\phi_{1}(E(u)e_{i})\otimes\phi_{1}(w_{j})+\\&&
+\sum_{i> d_{1},j> \nu_{1}}\phi_{1}(s_{ij})\frac{1}{c}\phi_{1}(E(u)e_{i})\otimes\frac{1}{c}\phi_{1}(E(u)w_{j})\\=&&
\sum_{i\leq d_{1},j\leq \nu_{1}}p\phi(s_{ij})x_{i}\otimes y_{j}+\sum_{i\leq d_{1},j> \nu_{1}}\frac{1}{c}\phi(s_{ij})x_{i}\otimes y_{j}+\\&&+\sum_{i> d_{1},j\leq \nu_{1}}\frac{1}{c}\phi(s_{ij})x_{i}\otimes y_{j}+\sum_{i> d_{1},j> \nu_{1}}\frac{1}{c^{2}}\phi_{1}(s_{ij})x_{i}\otimes y_{j}.
\end{eqnarray*}We conclude that the condition $\phi_{1}(e)=e$ yields the following equality for the coefficients
\begin{eqnarray}\label{table} t_{ij}=\left\{
           \begin{array}{ll}
             p\phi(s_{ij}), \;i\leq d_{1},j\leq\nu_{1} \\
             c^{-1}\phi(s_{ij}), \; i\leq d_{1},j>\nu_{1}\;or\;i>d_{1},j\leq\nu_{1}  \\
             c^{-2}\phi_{1}(s_{ij}), \;i> d_{1},j>\nu_{1} .
           \end{array}
         \right.
\end{eqnarray} Let $f\in M^{\star}$. To compute $F_{e}(\phi_{1}^{\star}(f))$ we use the basis $\{x_{i}\otimes y_{j}\}$. Namely, according to this basis we have $F_{e}(\phi_{1}^{\star}(f))=\sum_{i,j}t_{ij}\phi_{1}^{\star}(f)(x_{i})y_{j}$. Considering all different cases for $i,j$ as before and using the table for $t_{ij}$,  we can calculate
\begin{eqnarray*}F_{e}(\phi_{1}^{\star}(f))=&&
\sum_{i\leq d_{1},j\leq\nu_{1}}p\phi(s_{ij})\phi_{1}(f(e_{i}))\phi_{1}(w_{j})+\sum_{i\leq d_{1},j>\nu_{1}}\frac{1}{c}\phi(s_{ij})\phi_{1}(E(u)f(e_{i}))\phi_{1}(E(u)w_{j})+\\
&&\sum_{i> d_{1},j\leq\nu_{1}}\frac{1}{c}\phi(s_{ij})\phi_{1}(f(e_{i}))\phi_{1}(w_{j})
+\sum_{i>d_{1},j>\nu_{1}}\frac{1}{c^{2}}\phi_{1}(s_{ij})\phi_{1}(E(u)f(e_{i}))\phi_{1}(E(u)w_{j}).
\end{eqnarray*}
To compute $\phi_{1}(F_{e}(f))$ we use the basis $\{e_{i}\otimes w_{j}\}$. We have,
\begin{eqnarray*}\phi_{1}(F_{e}(f))=&&\phi_{1}(\sum_{i,j}s_{ij}f(e_{i})w_{j})=
\sum_{i,j\leq\nu_{1}}\phi(s_{ij}f(e_{i}))\phi_{1}(w_{j})+\sum_{i,j>\nu_{1}}
\phi_{1}(s_{ij}f(e_{i}))\phi_{0}(w_{j}).
\end{eqnarray*} If we break the two last sums into the usual cases, $i\leq d_{1}$, $i>d_{1}$ we can see that all the coefficients agree with the ones of $F_{e}(\phi_{1}^{\star}(f))$. We conclude that we can define a homomorphism
\begin{eqnarray*}F:&&\{e\in \Fil^{1}(M\otimes M'):\phi_{1}(e)=e\}\longrightarrow\Hom_{\Mod_{[0,1]}^{\phi}}(M^{\star},M')
\end{eqnarray*} by sending $e$ to $F_{e}$.

The next step is to define a map in the converse direction
\begin{eqnarray*}H:&&\Hom_{\Mod_{[0,1]}^{\phi}}(M^{\star},M')\longrightarrow\{e\in \Fil^{1}(M\otimes M'):\phi_{1}(e)=e\}.
\end{eqnarray*}
For $i\in\{1,\cdots,d\}$ we consider the dual basis $\{e_{i}^{\star}\}$, where $e_{i}^{\star}:M\rightarrow S$ sends $e_{i}$ to $1$ and $e_{j}$ to 0 for every $j\neq i$. We can easily see that this forms an adapted basis for $M^{\star}$ if we reverse the ordering of the basis elements. This means that the submodule $\Fil^{1}M^{\star}$ has a decomposition
\[\Fil^{1}M^{\star}=(\bigoplus_{i=1}^{d_{1}}\Fil^{1}S e_{i}^{\star})\bigoplus(\bigoplus_{i=d_{1}+1}^{d}S e_{i}^{\star}).\]
Let $f\in\Hom_{\Mod_{[0,1]}^{\phi}}(M^{\star},M')$.  For every $i=1,\cdots,d$, we can write   $f(e_{i}^{\star})=\sum_{j=1}^{\nu}s_{ij}w_{j}$, for a unique choice of  $s_{ij}\in S$. We can therefore define a map
\begin{eqnarray*}H:&&\Hom_{\Mod_{[0,1]}^{\phi}}(M^{\star},M')\longrightarrow M\otimes M'\\
&&[f:M\rightarrow M^{\star}]\longrightarrow e_{f}=\sum_{i,j}s_{ij}e_{i}\otimes w_{j}.
\end{eqnarray*} Once again we can easily check that $f$  does not depend on the choice of the basis. In particular, if we start with the basis $\mathbf{B}_{M}^{2}=\{x_{i}\}$ and  we express in a unique way $f(x_{i}^{\star})=\sum_{j}t_{ij}y_{j}$, then we get a second expression of $e_{f}=\sum_{ij}t_{ij}x_{i}\otimes y_{j}$.

First we claim that $e_{f}\in\Fil^{1}(M\otimes M')$. For, if either $i\leq d_{1}$ or $j\leq\nu_{1}$, then the summand $s_{ij}e_{i}\otimes w_{j}$ is clearly in $\in\Fil^{1}(M\otimes M')$. For the case $i>d_{1}$, notice that the function $f$ preserves the filtration and $e_{i}^{\star}\in\Fil^{1}M^{\star}$. This implies that $\sum_{j=1}^{\nu}s_{ij}w_{j}\in\Fil^{1}M'$, which in turn forces $s_{ij}\in \Fil^{1}S$, for $j>\nu_{1}$. We conclude that $H$ gives a map
\[\Hom_{\Mod_{[0,1]}^{\phi}}(M^{\star},M')\stackrel{H}{\longrightarrow} \Fil^{1}(M\otimes M').\]
\underline{Claim 2:} The element $e_{f}$ is fixed by the Frobenius $\phi_{1}$.\\
First we observe that for every $i\in\{1,\cdots,d\}$ we have an equality $x_{i}^{\star}=c^{-1}\phi_{1}^{\star}(E(u)^{1-n_{i}}e_{i}^{\star})$. Since the function $f$ commutes with the Frobenius, this in turn yields
\begin{eqnarray*}f(x_{i}^{\star})=&&c^{-1}f\phi_{1}^{\star}(E(u)^{1-n_{i}}e_{i}^{\star})=
c^{-1}\phi_{1}(f(E(u)^{1-n_{i}}e_{i}^{\star}))=c^{-1}\phi_{1}(E(u)^{1-n_{i}}f(e_{i}^{\star}))\\=
&&\sum_{j=1}^{\nu}c^{-1}\phi_{1}(E(u)^{1-n_{i}}s_{ij}w_{j})=\sum_{j=1}^{\nu}t_{ij}y_{j}.
\end{eqnarray*} By considering all the different cases for $i\leq d_{1}$, $i>d_{1}$, $j\leq\nu_{1}$ and $j>\nu_{1}$ and equating the coefficients, we conclude that
\[t_{ij}=\left\{
           \begin{array}{ll}
             p\phi(s_{ij}), \;i\leq d_{1},j\leq\nu_{1} \\
             c^{-1}\phi(s_{ij}), \; i\leq d_{1},j>\nu_{1}\;or\;i>d_{1},j\leq\nu_{1}  \\
             c^{-2}\phi_{1}(s_{ij}), \;i> d_{1},j>\nu_{1} ,
           \end{array}
         \right.
\] which is exactly the same table we found before. This means exactly that $\phi_{1}(e_{f})=e_{f}$, which completes the proof of the claim.

The last step is to show that $F$ and $H$ are inverse of each other. This is almost a tautology. To compute $H\circ F$, let $e=\sum_{i,j}s_{ij}e_{i}\otimes w_{j}$. Then to compute $H(F_{e})$ we first compute $F_{e}(e_{i}^{\star})=\sum_{j}s_{ij}w_{j}$, so we conclude that $H(F_{e})=\sum_{j}s_{ij}e_{i}\otimes w_{j}=e$, as expected. The equality $(F\circ H)(f)=f$ is similar, so we omit the proof.

\end{proof}

\bibliographystyle{amsalpha}

\bibliography{bibliography3}

\end{document}